\def\eref#1{(\ref{#1}%
)}
\def\fa{\hbox{ for all }}
\def\dfrac#1#2{\displaystyle{\frac{#1}{#2}   }}
\def\b1{\mathbf 1}
\newcommand{\R}{\ensuremath{\mathbb{R}}}
\newcommand{\N}{\ensuremath{\mathbb{N}}}
\def\call{\mathcal{L}}
\def\P{\mathcal{P}}
\newtheorem{definition}{Definition}
\newtheorem{theorem}{Theorem}
\newtheorem{corollary}{Corollary} 
\def\Pqd{{\cal P}_q^d}
\begin{document}
\begin{center}
{\bf Optimal Stencils in Sobolev Spaces}

Oleg Davydov\footnote{%
Univ. Gie\ss{}en, Oleg.Davydov@math.uni-giessen.de\\
https://www.staff.uni-giessen.de/odavydov/}
and Robert Schaback\footnote{%
Univ. G\"ottingen,
schaback@math.uni-goettingen.de\\
http://num.math.uni-goettingen.de/schaback/research/group.html}\\

Draft of \today
\end{center}
{\bf Abstract}: 
This paper proves that the approximation of pointwise derivatives of order $s$
of functions in Sobolev space $W_2^m(\R^d)$ by linear combinations
of function values cannot have a
convergence rate better than $m-s-d/2$, no matter how many nodes are used
for approximation and where they are placed.
These convergence rates are attained by
{\em scalable} approximations that are exact on
polynomials of order at least $\lfloor m-d/2\rfloor +1$, proving that the
  rates are optimal for given $m,\,s,$ and $d$.
And, for a fixed node set $X\subset\R^d$,
the convergence rate in any Sobolev space $W_2^m(\Omega)$ 
cannot be better than $q-s$ 
where $q$ is the maximal possible order of polynomial exactness 
of approximations based on $X$, no matter how large $m$ is.
In particular,
  scalable stencil constructions via polyharmonic kernels
  are shown to realize the optimal convergence rates, and good approximations
  of their error
  in Sobolev space can be  calculated
  via their error in Beppo-Levi spaces.
This allows to construct near-optimal stencils
in Sobolev spaces stably and efficiently,
for use in meshless methods to solve partial differential equations
via generalized finite differences (RBF-FD).
Numerical examples are included for illustration.
\section{Introduction}\label{SecIntro}
We consider discretizations of continuous 
linear functionals $\lambda\;:\;U\to\R$ on some normed linear space $U$
of real-valued functions on some bounded domain $\Omega\subset\R^d$.
The discretizations are {\em nodal}, i.e. they work with values $u(x_j)$ 
of functions $u\in U$ on a set $X=\{x_1,\ldots,x_M\}
\subset\Omega$ of {\em nodes} by 
\begin{equation}\label{eqapp}
\lambda(u)\approx   \lambda_{a,X}(u):=
\displaystyle{\sum_{j=1}^Ma_ju(x_j)   }\fa u\in U. 
\end{equation}
The background is that most operator equations can be written as infinitely may
linear equations
$$
\lambda(u)=f_\lambda \fa \lambda\in \Lambda\subset U^*,
$$
where the functionals evaluate weak or strong derivatives
or differential operators like the Laplacian or take boundary values.
This means that the classical approach of {\em meshless methods} 
is taken, namely to
write the approximations {\it entirely in terms of nodes} 
\cite{belytschko-et-al:1996-1}.

Our concern is to find {\em optimal} approximations in Sobolev space
$W_2^m(\Omega)$  for domains $\Omega\subset \R^d$. Their calculation 
is computationally costly and very unstable, but we shall prove that
there are {\em suboptimal}
approximations that can be calculated cheaply and stably,
namely via {\em scalable} approximations that 
have a certain exactness on
polynomials (Section \ref{SecOCiSS})
and may be 
constructed via polyharmonic kernels (Section \ref{SecPHS}). 
In particular, we shall show that they can have
the same convergence rate as the optimal
approximations, and we present the minimal
  assumptions on  the node sets to reach that optimal rate.

The application for all of this is that 
error bounds and convergence rates for nodal approximations 
to linear functionals enter into the consistency part
of the error analysis \cite{schaback:2016-1} of nodal meshless methods.
These occur in many papers in Science and Engineering, e.g.
\cite{aboiyar-et-al:2010-1,agarwal-basu:2012-1,%
  bayona-et-al:2012-1,chandhini-sanyarisanu:2007-1,%
  flyer-et-al:2015-1,flyer-et-al:2016-1,%
gerace-et-al:2009,hoang-et-al:2012-1,hosseini-hashemi:2011-1,iske:2013-1,%
sarler:2007-1,shankar-et-al:2015-1,shu-et-al:2003-1,shu-et-al:2005-1,%
stevens-et-al:2011-1,thai-et-al:2012-1,tolstykh:2000-1,%
vertnik-sarler:2011-1,yao-et-al:2011-1,yao-et-al:2012-1%
}, 
and several authors have analyzed the construction
of nodal approximations mathematically, e.g. 
\cite{davydov-oanh:2011-1,davydov-oanh:2011-2,
davydov-et-al:2016-1,iske:1995-1, iske:2003-1,
larsson-et-al:2013-1,wright-fornberg:2006-1},
but without considering optimal convergence rates.

To get started, we present a suitable notion of {\em scalability} 
in Section \ref{SecSca}
that allows to
define error functionals $\epsilon_h\in U^*$ 
based on the scaled point set $hX$ 
for small $h>0$ and 
to prove {\em convergence rates} $k$ in the sense that error bounds of the form
$\|\epsilon_h\|_{U^*}\leq Ch^k$ hold for $h\to 0$. The standard {\em derivative order} 
$|\alpha|$ of a pointwise multivariate derivative functional
$\lambda(u):=D^\alpha u(0)$ will reappear as a 
{\em scaling} order $s(\lambda)$ 
that governs how the approximations of a functional $\lambda$ 
scale for $h\to 0$.

Of course, optimal error bounds will crucially 
depend on the space $U$ and the node set $X$. If $U$ contains all 
real-valued polynomials, the achievable 
convergence rate of an approximation 
of a functional $\lambda$ based on a node set $X$ 
is limited by the maximal convergence rate
on the subspace of polynomials. Section \ref{SecEAoP} will prove that 
the upper limit of the convergence 
rate on polynomials is
$q_{max}(\lambda,X)-s(\lambda)$
where $q_{max}$ is the maximal order of polynomials on which the
  approximation is exact, 
and 
that
this rate can be reached by {\em scalable} approximations constructed
via exactness on polynomials.

But even if the node set $X$ is large enough to let 
approximations be exact on high-order polynomials, the convergence
rate may be restricted by limited smoothness of the functions in $U$. 
In Sobolev spaces $W_2^m(\R^d)$ or 
$W_2^m(\Omega)$ with $\Omega\subset\R^d$ the achievable rate 
for arbitrarily large node sets $X$ turns out to be
bounded above by $m-d/2-s(\lambda)$
in Section \ref{SecOCiSS}, so that 
\begin{equation}\label{eqOptRate}
\min\left(m-d/2-s(\lambda),q_{max}(\lambda,X)-s(\lambda)\right),
\end{equation}
is a general formula for an upper bound on the 
convergence rate in Sobolev space $W_2^m(\R^d)$, 
and this is confirmed by numerical experiments in Section \ref{SecExa}.

Then Sections  \ref{SecEAoP},  
  \ref{SecOCiSS}, and \ref{SecPHS} prove that the
  convergence rate \eref{eqOptRate} is {\em optimal}, 
  and it can be
achieved by  {\em scalable} 
stencils based solely on exactness on polynomials. 
Furthermore,
Section \ref{SecSC}  gives a sufficient condition
for the convergence of optimal stencils to scalable stencils.

A particularly interesting case is the {\em best compromise} case
where the two constraints on the convergence rate are equal, i.e.
\begin{equation}\label{eqcompro}
q_{max}(\lambda,X)=\lceil m-d/2\rceil.
\end{equation}
For a given smoothness $m$ it yields the sparsest approximation
that has the optimal
convergence rate (or comes arbitrarily close to it
if $m-d/2$ is an integer),
and for a given sparsity via $X$ it provides 
the minimal smoothness
that is required to realize the maximal possible rate of convergence using that
node set.

The numerical examples are collected in Section \ref{SecExa},
while the final section \ref{SecU} summarizes our results and
points out a few open problems for further research.  
\section{Scalability}\label{SecSca}
We now study the behavior of functionals and their approximations under
{\em scaling}.
\begin{definition}\label{defScaOrd}
\begin{enumerate}
 \item A domain $\Omega\subset\R^d$ is {\em scalable}, if it contains the
origin as an interior point and satisfies  $h\Omega\subseteq\Omega$ 
for all $0\leq h\leq 1$, i.e. if $\Omega$ is star--shaped
with repect to the origin.
\item
A space $U$ of functions on a scalable domain $\Omega$ 
is {\em scalable}, if $u(h\cdot)$ is in $U$ for all $0<h\leq 1$ 
and all $u\in U$. 
\item
A functional $\lambda\in U^*$ on a scalable space $U$ 
has  {\em scaling order} or {\em homogeneity order} $s$ if
$$
\lambda(u(h\cdot))
=h^{s}\lambda(u) \fa u\in U.
$$ 
\end{enumerate} 
\end{definition}
Of course, this means that the functional $\lambda$ must be 
local in or near the origin.  
For example, the standard strong functionals are 
modelled by multivariate derivatives 
$$
\lambda_\alpha(u)=\dfrac{\partial^\alpha u}{\partial x^\alpha}(0)
$$
at zero, with the scaling behaviour
$$
\begin{array}{rcl}
\lambda_\alpha(u(h\cdot))
&=&h^{|\alpha|}\lambda_\alpha(u)
\end{array}
$$
showing that the scaling order coincides with the order of differentiation here.
This generalizes to all linear homogeneous differential 
operators, e.g. the Laplacian.

Having dealt with scalability of $\lambda$, we now turn to scalability
of the nodal approximation $\lambda_{a,X}$ of \eref{eqapp}.
To match the scalability order $s$ of $\lambda$, we should assume the same
$h$ power for  $\lambda_{a,X}$, and consider
$$
h^{-s}\lambda_{a,X}(u(h\,\cdot))=\displaystyle{\sum_{j=1}^Ma_jh^{-s}u(hx_j)}
= \lambda_{ah^{-s},hX}(u)
$$
for all $u\in U$ and $0<h\leq 1$. This is the right notion
of scalability for the approximation, but now
we need the $h$ dependence and refrain from setting this equal to
$\lambda_{a,X}(u)$ like in Definition \ref{defScaOrd}. 
\begin{definition}\label{defErrScaOrd}
\begin{enumerate}
 \item
An approximation \eref{eqapp} to a scalable functional $\lambda$
of scaling order $s$ is {\em scalable} of the same order, if the error
functional is scalable of order $s$, i.e.
\begin{equation}\label{eqerrfunct}
\epsilon_h(u):=\lambda(u)-\lambda_{ah^{-s},hX}(u)
=h^{-s}(\lambda-\lambda_{a,X})(u(h\,\cdot))=h^{-s}\epsilon_1(u(h\cdot)) 
\end{equation}
for all $u\in U,\;0<h\leq 1$. 
\item A scalable approximation \eref{eqerrfunct} will be called a {\em stencil}.
\item If an approximation \eref{eqapp} is given for $h=1$, and if
the functional $\lambda$ has scaling order $s$,   the transition
to \eref{eqerrfunct} by using weights $a_jh^{-s}$ in the scaled case will
be called {\em enforced} scaling. 
\end{enumerate} 
\end{definition} 
A standard example is the five-point star approximation
$$
-\Delta u(0,0)\approx \dfrac{1}{h^2}(4u(0,0)-u(0,h)-u(0,-h)-u(h,0)-u(-h,0))
$$
to the Laplacian in 2D, and all other notions of generalized divided differences
that apply to scaled node sets $hX$.

The scaled form in \eref{eqerrfunct} allows the very simple error bound
$$
|\epsilon_h(u)|\leq h^{-s}\|\lambda-
 \lambda_{a,X}\|_{U^*}\|u(h\cdot)\|_{U} \fa u\in U
$$
that is useful if 
$\|u(h\cdot)\|_{U}$ is accessible and behaves nicely for $h\to 0$.

Weights of scalable approximations can be calculated at large scales and then scaled down
by multiplication. This bypasses instabilities for small $h$ and saves a lot of
computational work, in particular if applications work on multiple scales
or if meshless methods use the same geometric pattern of nodes repeatedly,
e.g. in Meshless Local Petrov Galerkin \cite{atluri:2005-1} techniques.

However, {\em optimal} approximations in Sobolev spaces will not be scalable. 
This is why the rest of the paper studies 
how close scalable approximations come to the optimal
ones analyzed in \cite{davydov-schaback:2016-2}.
\section{Optimal Convergence on Polynomials}\label{SecEAoP}
We first relate the approximation error of nodal approximations 
to exactness on polynomials and assume that a scalable functional $\lambda$ of 
scaling order $s$ is  given that is applicable to all $d$-variate 
polynomials. This will be true, for instance, in all Sobolev spaces
$W_2^m(\Omega)$ for bounded scalable domains $\Omega\subset\R^d$. 
The space of all real-valued $d$-variate polynomials up to order $q$
will be denoted by $\Pqd$, and for a given node set $X\subset\R^d$ 
and a functional $\lambda$ we define
$$
q_{max}(\lambda,X)=\max\{q\;:\; \lambda- \lambda_{a,X}=0 
\hbox{ on } \Pqd \hbox{ for some } a \in\R^{|X|}\}
$$
to be the maximal possible {\em polynomial exactness order} (abbreviated by
PEO in the figures of the examples)  of a 
nodal approximation \eref{eqapp} to $\lambda$ based on $X$. 
\begin{theorem}\label{theqsbnd}
  Consider a fixed set $X\subset\R^d$ and
  a functional $\lambda$.
  If a sequence of general nodal approximations
  $\lambda_{a(h),hX}$ converges to $\lambda$ on a
  space spanned by finitely many monomials, then $X$ admits an approximation
  to $\lambda$ that is exact on these monomials.
\end{theorem}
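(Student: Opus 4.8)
The plan is to reduce the claim to finite-dimensional linear algebra and then to exploit homogeneity of the monomials together with closedness of a finite-dimensional range. Write the given monomials as $m_1,\dots,m_N$ with $m_i=x^{\alpha_i}$, set $V=\spn\{m_1,\dots,m_N\}$, and collect them in the matrix $A\in\R^{N\times|X|}$ with entries $A_{ij}=m_i(x_j)$ together with the target vector $\ell=(\lambda(m_1),\dots,\lambda(m_N))^{T}$. An approximation based on $X$ that is exact on the $m_i$ is precisely a weight vector $b\in\R^{|X|}$ with $Ab=\ell$, so the conclusion is equivalent to $\ell\in\operatorname{range}(A)$. Since $\operatorname{range}(A)$ is a subspace of the finite-dimensional space $\R^N$, it is closed, and this is the property I intend to drive the argument.

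First I would rewrite the hypothesis in these terms. Because each $m_i$ is homogeneous of degree $|\alpha_i|$, scaling the nodes factors out a power of $h$,
\[
\lambda_{a(h),hX}(m_i)=\sum_{j}a_j(h)\,m_i(hx_j)=h^{|\alpha_i|}\sum_{j}a_j(h)\,m_i(x_j)=h^{|\alpha_i|}\,(A\,a(h))_i ,
\]
so convergence on $V$ says exactly that $h^{|\alpha_i|}(A\,a(h))_i\to\lambda(m_i)$ as $h\to0$ for every $i$. Introducing the diagonal scaling $D(h)=\operatorname{diag}(h^{|\alpha_1|},\dots,h^{|\alpha_N|})$, the hypothesis becomes the single vector statement $D(h)\,A\,a(h)\to\ell$, where every iterate $A\,a(h)$ lies in the fixed subspace $\operatorname{range}(A)$.

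The core step is then to produce, from this scaled convergence, a genuinely convergent sequence of moment vectors inside $\operatorname{range}(A)$ whose limit is $\ell$. Organizing the monomials by degree and handling the degree blocks in turn, I would rescale the weights so that the relevant block carries the corresponding component of $\ell$ in the limit while the remaining blocks are shown to contribute the correct (possibly vanishing) entries; closedness of $\operatorname{range}(A)$ then forces $\ell\in\operatorname{range}(A)$ and yields the exact weights $b$. Equivalently, one may argue by duality: it suffices to check that every $c$ with $A^{T}c=0$ — that is, every $p=\sum_i c_i m_i\in V$ vanishing on $X$ — satisfies $c^{T}\ell=\lambda(p)=0$, and I would feed the homogeneous pieces of such a $p$ into the scaled approximation one degree at a time.

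I expect the main obstacle to be precisely the bookkeeping forced by the different powers $h^{|\alpha_i|}$: the factor $D(h)$ does not preserve $\operatorname{range}(A)$, and the weights $a(h)$ will in general blow up like $h^{-s}$ when $\lambda$ has positive scaling order $s$, so one cannot simply pass to the limit in $A\,a(h)$. The delicate point is to coordinate the degree blocks so that a single rescaled weight sequence reproduces all components of $\ell$ simultaneously; this is exactly where homogeneity of the monomials must be used, and where a compactness argument on the normalized weight directions, or an induction on the degree, is likely needed to keep the limit inside the fixed, closed range.
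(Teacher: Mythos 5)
You have set up precisely the same reduction as the paper: your identity $\lambda_{a(h),hX}(m_i)=h^{|\alpha_i|}(A\,a(h))_i$, i.e.\ $D(h)A\,a(h)\to\ell$, is exactly the paper's equation (\ref{eqpolscale}), $\lambda(x^\alpha)-\lambda_{a(h),hX}(x^\alpha)=\lambda(x^\alpha)-\lambda_{a(h)h^{|\alpha|},X}(x^\alpha)$, and the paper then concludes in a single sentence that the best-approximation error of $\lambda$ by functionals $\lambda_{a,X}$ on the monomial span is zero, implicitly using what you make explicit: $\operatorname{range}(A)$ is a finite-dimensional, hence closed, subspace, so a zero infimum is attained and exact weights exist. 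But your proposal stops exactly where the work begins. The ``core step'' of coordinating the degree blocks into one convergent sequence of moment vectors inside $\operatorname{range}(A)$ is announced, not carried out, and you yourself flag it as unresolved. Since the rescaled weight vector $a(h)h^{|\alpha|}$ depends on $|\alpha|$, neither your sketch nor the paper's one-line proof actually exhibits a \emph{single} weight vector that is simultaneously good for all degrees; as written, the proposal is a plan rather than a proof.

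Your suspicion that this is more than bookkeeping is correct: under the stated hypotheses the coordination step cannot be done at all. Take $d=1$, $X=\{1\}$, $\lambda=\delta_0$ (a scalable functional of scaling order $s=0$), the monomials $1$ and $x$, and $a(h)\equiv 1$. Then $\lambda_{a(h),hX}=\delta_h$ converges to $\delta_0$ on $\spn\{1,x\}$ as $h\to 0$, yet exactness on $X$ would require one weight $a$ with $a=\lambda(1)=1$ and $a=\lambda(x)=0$. Your own duality test detects the failure: $p(x)=x-1$ vanishes on $X$ while $\lambda(p)=-1\neq 0$. In your block language, setting $b(h):=h^{s}a(h)$ gives $(A\,b(h))_\alpha=h^{s-|\alpha|}\lambda_{a(h),hX}(x^\alpha)$, which converges to the correct entry $\ell_\alpha$ for $|\alpha|\leq s$ (using $\lambda(x^\alpha)=0$ for $|\alpha|\neq s$, as noted in the proof of Theorem \ref{theASA}), but for $|\alpha|>s$ equals $h^{s-|\alpha|}\cdot o(1)$ and need not vanish --- no compactness argument on normalized weight directions or induction on degree can repair this, because the statement itself fails there. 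What rescues it, and then finishes instantly in your closed-range framework, is a rate hypothesis: if the error on each monomial $x^\alpha$ with $|\alpha|>s$ is $o(h^{|\alpha|-s})$ (automatic when all monomials share one degree, since then $D(h)$ is a scalar and preserves $\operatorname{range}(A)$), then $A\,b(h)\to\ell$ and $\ell\in\operatorname{range}(A)$ follows. Note that the paper's own proof makes the same silent leap over the $\alpha$-dependence of $a(h)h^{|\alpha|}$ and is subject to the same example; read with the rate condition --- which is what the later applications to convergence rates actually use --- your approach and the paper's proof coincide.
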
 
\begin{proof}
Due to 
\begin{equation}\label{eqpolscale}
\lambda(x^\alpha)- \lambda_{a(h),hX}(x^\alpha)
=
\lambda(x^\alpha)- \lambda_{a(h)h^{|\alpha|},X}(x^\alpha),
\end{equation}
convergence of functionals $\lambda_{a(h),hX}$
to $\lambda$ on a set of monomials
implies that  the error of the best approximation
to $\lambda$ by functionals $\lambda_{a,X}$, restricted
to the space spanned by those monomials, is zero.
\end{proof}
We now know an upper bound for the
maximal order of polynomials for which
approximations can be convergent,
if $X$ and $\lambda$ are fixed.
This order can be achieved for scalable stencils:
\begin{theorem}\label{thePolOrd}
If all polynomials are in $U$, the convergence rate of 
a scalable stencil of scaling order $s$ based on a point set $X$ 
on all polynomials is exactly $q_{max}(\lambda,X)-s$ if the stencil is 
exact on $\Pqd$ for $q=q_{max}(\lambda,X)$. The convergence rate
on all of $U$ is bounded above by $q_{max}(\lambda,X)-s$.
\end{theorem}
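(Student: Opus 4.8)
The plan is to reduce every claim to the behaviour of the \emph{base} error functional $\epsilon_1=\lambda-\lambda_{a,X}$ on individual monomials, exploiting that monomials are homogeneous. First I would record the scaling identity for a monomial $x^\alpha$: since $x^\alpha$ evaluated at $hy$ equals $h^{|\alpha|}y^\alpha$, the scaled function $x^\alpha(h\,\cdot)$ is simply the scalar multiple $h^{|\alpha|}x^\alpha$, so feeding it into the scalability relation \eref{eqerrfunct} and using linearity of $\epsilon_1$ gives
$$
\epsilon_h(x^\alpha) = h^{-s}\epsilon_1\bigl(h^{|\alpha|}x^\alpha\bigr) = h^{|\alpha|-s}\,\epsilon_1(x^\alpha)
$$
for every multi-index $\alpha$ and every $0<h\leq 1$. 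This one formula carries the whole argument: the error on each monomial decays at rate $|\alpha|-s$, with coefficient $\epsilon_1(x^\alpha)$.

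Next I would feed in the exactness hypothesis. Writing $q:=q_{max}(\lambda,X)$ and recalling the convention that $\Pqd$ collects the polynomials of order $q$, i.e.\ of degree at most $q-1$, exactness of the stencil on $\Pqd$ means precisely that $\epsilon_1(x^\alpha)=0$ whenever $|\alpha|<q$. Expanding an arbitrary polynomial into monomials and applying the identity, only terms with $|\alpha|\geq q$ survive, whence $\epsilon_h(p)=O(h^{q-s})$ as $h\to 0$; this yields the lower bound $q-s$ for the rate on polynomials. For sharpness I would invoke the maximality of $q$: if $\epsilon_1$ vanished on \emph{all} monomials of degree exactly $q$ as well, then together with exactness on degree $\leq q-1$ the stencil would be exact on $\P_{q+1}^d$, contradicting that $q$ is the largest attainable exactness order for $X$. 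Hence some monomial $x^\beta$ with $|\beta|=q$ satisfies $\epsilon_1(x^\beta)\neq 0$, and the identity gives $\epsilon_h(x^\beta)=h^{q-s}\epsilon_1(x^\beta)$, which is exactly of order $h^{q-s}$. Combined, the rate on polynomials equals $q-s=q_{max}(\lambda,X)-s$.

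For the final assertion I would note that, since all polynomials lie in $U$, the fixed function $x^\beta$ is an admissible test element, so
$$
\|\epsilon_h\|_{U^*}\geq \frac{|\epsilon_h(x^\beta)|}{\|x^\beta\|_U} = \frac{|\epsilon_1(x^\beta)|}{\|x^\beta\|_U}\,h^{q-s},
$$
a strictly positive multiple of $h^{q-s}$. Thus no bound $\|\epsilon_h\|_{U^*}\leq Ch^k$ with $k>q-s$ can hold as $h\to 0$, so the rate on all of $U$ cannot exceed $q-s$; and for a general scalable stencil with actual exactness order $q'\leq q$ the same computation gives the bound $q'-s\leq q_{max}(\lambda,X)-s$.

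The main obstacle is less a computation than careful bookkeeping. I must fix the order-versus-degree convention for $\Pqd$ so that the advertised rate $q_{max}(\lambda,X)-s$ comes out (exactness on degree $\leq q-1$ leaves degree $q$ as the first unmatched order), and I must make ``convergence rate on all polynomials'' precise, since the full polynomial space is infinite-dimensional and carries no canonical norm; the clean reading is monomial-by-monomial, equivalently on each finite-dimensional $\Pqd$, where the scaling identity makes the rate manifest. The only genuinely non-routine step is the sharpness argument, i.e.\ extracting a degree-$q$ monomial with $\epsilon_1(x^\beta)\neq 0$ from the maximality of $q_{max}(\lambda,X)$ — a short linear-algebra observation, but the one place where the definition of $q_{max}$ is actually used.
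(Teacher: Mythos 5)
Your proposal is correct and takes essentially the same route as the paper: the paper's entire proof consists of the identity $\epsilon_h(x^\alpha)=h^{|\alpha|-s}\,\epsilon_1(x^\alpha)$ (equation (\ref{eqpolscale}) applied in the scalable situation), which is exactly your central scaling formula. The bookkeeping you add --- exactness annihilating monomials of degree below $q_{max}(\lambda,X)$, maximality of $q_{max}(\lambda,X)$ yielding a degree-$q_{max}$ monomial $x^\beta$ with $\epsilon_1(x^\beta)\neq 0$, and testing $\|\epsilon_h\|_{U^*}$ against $x^\beta$ for the bound on all of $U$ --- is precisely what the paper leaves implicit behind ``proving the assertion,'' and you fill it in correctly.
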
 
\begin{proof}
  We apply \eref{eqpolscale} in the scalable situation and get
  $$
h^{-s}\lambda((hx)^\alpha)- h^{-s} \lambda_{a,X}((hx)^\alpha)
=
h^{-s+|\alpha|}\left(\lambda(x^\alpha)- \lambda_{a,X}(x^\alpha)\right),
$$
proving the assertion.
\end{proof}
Consequently, if a node set $X=\{x_1,\ldots,x_M\}$ is given, 
if the application allows all polynomials,
and if one wants a scalable stencil, 
the best one can
do is to take a stencil with maximal order $q_{max}(\lambda,X)$
of polynomial exactness. It will lead to
a scalable stencil with the optimal convergence rate 
among all approximations.
Additional tricks cannot improve that rate, but it can be smaller 
due to restricted smoothness of functions in $U$. This will be the topic
of Section \ref{SecOCiSS}.

If exactness of order $q$ is required in applications, one takes a basis
$p_1,\ldots,p_Q$ of the space $\P_q^d$ 
of $d$-variate polynomials of order $q$ with $Q=\dim \P_q^d={q+d-1\choose d}$
and has to find a solution of the linear system
\begin{equation}\label{eqlinsys}
\lambda(p_k)=\displaystyle{\sum_{j=1}^Ma_jp_k(x_j),\;1\leq k\leq Q   }. 
\end{equation}
This may exist even in case $M<Q$, the simplest example being
the five-point star in 2D 
for $\lambda(u)=\Delta u(0)$ which is exact of order 4, while 
$M=5<Q=10$. For general point sets, there is no way around 
setting up and solving the above linear system.

If the system has a solution, we get a stencil by enforced scaling and
with error 
$$
h^{-s}\lambda(u(h\cdot))-h^{-s}\displaystyle{\sum_{j=1}^Ma_ju(hx_j)}
$$
which then is polynomially exact of order $q$ and has convergence rate
$k=q-s$, 
but only on polynomials.
If $U$ contains functions of limited smoothness, 
this convergence rate will not be attained for all functions in $U$.
We shall prove
in Section \ref{SecOCiSS} that the convergence rate in $W_2^m(\Omega)$
for $\Omega\subseteq\R^d$ is limited by $m-s-d/2$, no matter how large
the order $q$ of polynomial exactness on $X$ is.

To make this construction partially independent of the functionals,
we add
\begin{definition}\label{defqX}
A  finite point set $X=\{x_1,\ldots,x_M\}\subset\R^d$  has {\em polynomial
reproduction} of order $q$, if all polynomials in $\P_q^d$ can be recovered
from their values on $X$. 
\end{definition} 
\begin{theorem}\label{theExOnPolRep}
If the set $X$ allows polynomial reproduction of order $q$, then all
admissible linear functionals of scaling order $s\leq q$ 
have a stencil that is exact at least of order $q$,
by applying $\lambda$ to a Lagrange basis
of $\P_q^d$. This stencil has convergence rate 
at least $q-s$ on polynomials. 
\end{theorem}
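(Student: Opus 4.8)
The plan is to turn the polynomial reproduction hypothesis into an explicit formula for the stencil weights, and then read off both the exactness and the convergence rate from the scaling identity \eref{eqpolscale}. First I would make the ``Lagrange basis'' precise. Polynomial reproduction of order $q$ (Definition \ref{defqX}) says exactly that the evaluation map $E:\P_q^d\to\R^M$, $Ep=(p(x_1),\dots,p(x_M))$, is injective. An injective linear map has a left inverse, and writing $u_j\in\P_q^d$ for the image of the $j$-th unit vector of $\R^M$ under such a left inverse produces functions with the reproduction property
$$
p=\sum_{j=1}^M p(x_j)\,u_j \fa p\in\P_q^d .
$$
When $M=Q$ these $u_j$ are the usual cardinal Lagrange polynomials with $u_j(x_k)=\delta_{jk}$; in the oversampled case $M>Q$ they are no longer cardinal, but they still reproduce $\P_q^d$, which is all that the construction needs.

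Next I would define the weights by $a_j:=\lambda(u_j)$. This is legitimate precisely because $\lambda$ is \emph{admissible}, i.e. applicable to all polynomials, and each $u_j$ is a polynomial. Exactness is then immediate from the linearity of $\lambda$: for every $p\in\P_q^d$,
$$
\lambda_{a,X}(p)=\sum_{j=1}^M a_j\,p(x_j)=\sum_{j=1}^M p(x_j)\,\lambda(u_j)
=\lambda\Big(\sum_{j=1}^M p(x_j)\,u_j\Big)=\lambda(p),
$$
so $\lambda-\lambda_{a,X}$ vanishes on $\P_q^d$, and after enforced scaling to order $s$ the resulting stencil is exact of order at least $q$.

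For the rate I would argue exactly as in the proof of Theorem \ref{thePolOrd}. Expanding a polynomial in monomials and applying \eref{eqpolscale}, the scaled error on $x^\alpha$ equals $h^{|\alpha|-s}\big(\lambda(x^\alpha)-\lambda_{a,X}(x^\alpha)\big)$. All terms with $|\alpha|<q$ drop out by the exactness just established, so the surviving terms carry a factor $h^{|\alpha|-s}$ with $|\alpha|\ge q$; under the assumption $s\le q$ the slowest of these decays like $h^{q-s}$, which gives a convergence rate of at least $q-s$ on polynomials.

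The construction and the exactness computation are routine once the reproduction functions are in hand, so the only real obstacle is establishing the existence of polynomial reproduction functions $u_j\in\P_q^d$ in the oversampled regime $M>Q$, where no cardinal Lagrange basis exists: there one must argue through a left inverse of the injective evaluation map rather than by writing down an interpolation formula. A secondary point to flag is the role of the hypothesis $s\le q$, which is exactly what keeps the exponent $q-s$ nonnegative, so that the stencil converges rather than diverges; the boundary case $s=q$ is degenerate, since the scaling order forces $\lambda(x^\alpha)=0$ whenever $|\alpha|\neq s$, hence $\lambda$ annihilates all of $\P_q^d$ and every weight $a_j=\lambda(u_j)$ vanishes.
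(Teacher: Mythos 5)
Your proposal is correct and takes essentially the same route as the paper: both define the weights by $a_j=\lambda(u_j)$ for functions $u_j\in\P_q^d$ that reproduce $\P_q^d$ from its values on $X$, verify exactness by linearity, and read off the rate $q-s$ from the scaling identity in the proof of Theorem \ref{thePolOrd}. The only (cosmetic) difference is that the paper extracts a subset $Y\subseteq X$ of $Q=\dim\P_q^d$ points carrying a genuine cardinal Lagrange basis, so its weights are supported on $Y$, whereas you construct the reproduction functions via a left inverse of the injective evaluation map on all of $X$ --- an equally valid variant, and your side remark on the degenerate case $s=q$ is consistent with the paper's convention that $\P_q^d$ contains only monomials of degree at most $q-1$.
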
 
\begin{proof}
Let the set $X$ allow polynomial reproduction of order $q$. Then, for
$Q=\dim \P_q^d$, there are 
polynomials $p_1,\ldots,p_Q$ and a subset $Y=\{y_1,\ldots,y_Q\}\subseteq X$ 
such that the representation 
$$
p(x)=\displaystyle{\sum_{j=1}^Qp(y_j)p_j(x)\fa p\in \P_q^d   } 
$$  
holds, and the matrix of values $p_k(y_j),\;1\leq j.k\leq Q$ is the identity. 
This implies $Q\leq M$, and the stencil satisfying
$$
\lambda(p)=\displaystyle{\sum_{j=1}^Qp(y_j)\lambda(p_j)\fa p\in \P_q^d   } 
$$
with weights $a_j:=\lambda(p_j)$ is exact on $\P_q^d$. The rest follows like
above. 
\end{proof}
But note that the five-point star is an example of an approximation on a set
that has polynomial reproduction only of order $2$, while it 
has a scalable stencil for the Laplacian that is exact on polynomials
of order up to $4$ and convergent of rate 2. The application
of Theorem \ref{theExOnPolRep} would require polynomial reproduction 
of order $4$ for the same convergence rate.

In general, one can use the $M$ given nodes for getting exactness 
on polynomials of maximal order, and then there can be additional degrees of
freedom because the $Q\times M$ linear system \eref{eqlinsys}
may be nonuniquely solvable. 
The paper \cite{davydov-schaback:2016-1} deals with various techniques
to use the additional degrees of freedom, e.g. for minimizing the $\ell_1$ 
norm of the weights. In all cases the result is scalable and then 
this paper applies as well. On the other hand, the paper 
\cite{davydov-schaback:2016-2} focuses on non-scalable
approximations induced by kernels. Both papers perform their 
convergence analysis mainly for single approximations.
While 
this paper focuses on convergence rates in Sobolev spaces,
\cite{davydov-schaback:2016-2} considers Hölder spaces and Sobolev spaces
$W_\infty^r$. 
A third way to use additional degrees of freedom is to
take optimal stencils for polyharmonic kernels in 
Beppo-Levi spaces, see Section 
\ref{SecPHS}.

But before we go over from polynomials 
to these spaces, we remark that many application papers
use meshless methods to solve problems that have true solutions   
$u^*$ with rapidly convergent power series representations
(see e.g \cite{kansa:2015-1} for a recent example 
with $u^*(x,y)=\exp(ax+by)$).
In such cases, a high order of polynomial exactness pays off, but as soon as 
the problem is treated in Sobolev space, this advantage is gone. 
A truly worst-case analysis of nodal meshless methods is in 
\cite{schaback:2016-1}.

This discussion showed that on polynomials one can get
stencils of arbitrarily high convergence rates, provided that there are enough
nodes to ensure exactness on high-degree polynomials. 
For working on spaces of functions with limited smoothness, the latter
will limit the convergence rate of the stencil, and we want to show how.
\section{Optimal Convergence in Sobolev Spaces}\label{SecOCiSS}
Our goal is to reach the optimal convergence rates in Sobolev spaces
via cheap, scalable,  and stable stencils, and for this we need to know those
rates. But before that, we want to eliminate the difference between local and
global Sobolev spaces, as far as convergence {\em rates} are concerned.

Local Sobolev functionals
are global ones due to 
$W_2^m(\Omega)^*\subset W_2^m(\R^d)^*$ that follows from 
$W_2^m(\Omega)\supset W_2^m(\R^d)$ for Lipschitz domains.
This implies that we can evaluate
the norm of each functional $\lambda\in W_2^m(\Omega)^*$ 
in $W_2^m(\R^d)^*$ via the kernel, up to a fixed multiplicative 
constant.

For the other way round and in the scalable case, 
we consider the subspace $L_\Omega$ 
of all point-based functionals $  \lambda_{a,X} \in W_2^m(\R^d)^*$ 
with sets $X\subset\Omega$ and $a\in\R^{|X|}$ for a scalable domain 
$\Omega\subset\R^d$ and form its closure $\call_\Omega$ under the  
kernel-based $W_2^m(\R^d)^*$ norm. Exactly these functionals are those that we
study here. Since the spaces $W_2^m(\R^d)$ and $W_2^m(\Omega)$ 
are norm-equivalent, the limit process is the same in $W_2^m(\Omega)$,
and therefore we have that  $\call_\Omega\subset W_2^m(\Omega)^*$.
\begin{theorem}\label{theCREqui}
The functionals considered here are always in the space 
$\call_\Omega\subset W_2^m(\Omega)^*$, and their norm can be evaluated in
$W_2^m(\R^d)^*$ up to a space- and domain- dependent constant.
The convergence rates in $W_2^m(\Omega)^*$ and $W_2^m(\R^d)^*$ are the same.\qed
\end{theorem}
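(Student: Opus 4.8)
The plan is to obtain both assertions as soft consequences of a single hard input: the existence of a bounded linear extension operator $E\colon W_2^m(\Omega)\to W_2^m(\R^d)$ for the bounded Lipschitz domains under consideration, as provided by the Stein/Calder\'on extension theorem; the scalable $\Omega$ of Definition~\ref{defScaOrd} that we work with are assumed to be of this type (bounded star-shaped Lipschitz domains qualify). Paired with $E$ is the restriction operator $R\colon W_2^m(\R^d)\to W_2^m(\Omega)$, $Rv:=v|_\Omega$, which is bounded with $\|R\|\le 1$ in the standard norm because the $W_2^m(\Omega)$ seminorms integrate over a subdomain, and which satisfies $RE=\mathrm{id}_{W_2^m(\Omega)}$ since $E$ extends. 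Throughout I assume $m>d/2$, so that point evaluations are continuous by the Sobolev embedding $W_2^m\hookrightarrow C$; this is automatic in the regime $m>s+d/2$ that the paper studies.

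First I would pass to adjoints. The map $R^*\colon W_2^m(\Omega)^*\to W_2^m(\R^d)^*$ sends $\lambda$ to $\lambda\circ R$, and dualizing $RE=\mathrm{id}$ gives $E^*R^*=\mathrm{id}_{W_2^m(\Omega)^*}$. Hence $R^*$ is bounded below, and combining with $\|R^*\|=\|R\|\le 1$ yields
\[
\frac{1}{\|E\|}\,\|\lambda\|_{W_2^m(\Omega)^*}\le\|R^*\lambda\|_{W_2^m(\R^d)^*}\le\|\lambda\|_{W_2^m(\Omega)^*}.
\]
Thus $R^*$ is an isomorphism onto a closed subspace of $W_2^m(\R^d)^*$, and the $W_2^m(\Omega)^*$ norm agrees with the $W_2^m(\R^d)^*$ norm up to the factor $\|E\|=C_\Omega$, a constant depending only on $m$ and $\Omega$ (the fixed discrepancy between the kernel norm and the standard Sobolev norm being absorbed into it). Since $W_2^m(\R^d)$ is, for $m>d/2$, a reproducing kernel Hilbert space with Mat\'ern kernel $K$, the latter norm is computable via $\|\mu\|_{W_2^m(\R^d)^*}^2=\mu^x\mu^y K(x,y)$, which is the asserted kernel evaluation.

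Next I would check that point-based functionals are transported consistently. For $X\subset\Omega$ and $v\in W_2^m(\R^d)$ one has $(R^*\lambda_{a,X})(v)=\lambda_{a,X}(v|_\Omega)=\sum_j a_j v(x_j)$, because each $x_j\in\Omega$; so $R^*$ identifies the point functional on $W_2^m(\Omega)$ with the point functional on $W_2^m(\R^d)$, i.e. $L_\Omega=R^*(\{\text{point functionals}\})$. Being a norm-equivalence onto a closed subspace, $R^*$ commutes with closures, whence $\call_\Omega=R^*\big(\overline{\{\text{point functionals}\}}^{\,W_2^m(\Omega)^*}\big)$, and under this identification that reads exactly $\call_\Omega\subset W_2^m(\Omega)^*$. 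Scalability of $\Omega$ enters only to keep scaling admissible: from $X\subset\Omega$ and $h\Omega\subseteq\Omega$ we get $hX\subset\Omega$, so each scaled nodal functional $\lambda_{ah^{-s},hX}$ again lies in $L_\Omega\subset\call_\Omega$.

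Finally, the convergence-rate claim is immediate from the norm equivalence. Each error functional $\epsilon_h=\lambda-\lambda_{ah^{-s},hX}$ lies in both duals, and by the displayed inequality $\|\epsilon_h\|_{W_2^m(\Omega)^*}$ and $\|R^*\epsilon_h\|_{W_2^m(\R^d)^*}$ differ by at most the fixed factor $C_\Omega$; hence a bound $\|\epsilon_h\|\le Ch^k$ in one space transfers to the other with the same exponent $k$ and only a rescaled constant, so the attainable convergence rates coincide. The only genuinely hard ingredient is the extension theorem, whose availability for the domains at hand underpins every step; once it is granted, the remainder is duality together with the elementary observation that equivalent norms preserve the order of decay.
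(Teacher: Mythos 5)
Your proposal is correct and takes essentially the same route as the paper: the paper's one\-/paragraph argument preceding the theorem is exactly the restriction/extension duality for Lipschitz domains (its inclusion $W_2^m(\Omega)^*\subset W_2^m(\R^d)^*$ is your $R^*$, the ``fixed multiplicative constant'' is your $\|E\|$ from the extension theorem, and the kernel evaluation of the global dual norm is invoked the same way), followed by the identical closure argument for $L_\Omega$ and the transfer of convergence rates via norm equivalence. You have merely made explicit the adjoint identities $E^*R^*=\mathrm{id}$ and the fact that an isomorphism onto a closed subspace commutes with closures, steps the paper leaves implicit.
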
  
In Section \ref{SecPHS} we shall extend this argument to Beppo-Levi spaces.
\begin{theorem}\label{theopt}
The convergence rate of any nodal approximation to a scalable functional
$\lambda$  of scalability order $s$ on  
$W_2^m(\R^d)$ with $m>d/2$
is at most $m-s-d/2$. 
\end{theorem}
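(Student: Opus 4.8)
The natural route is a duality (fooling‑function) argument: to show that the convergence rate cannot exceed $m-s-d/2$, it suffices to produce, for each small $h>0$, a single test function $w_h\in W_2^m(\R^d)$ that the approximation cannot ``see'' and for which $\lambda(w_h)$ is large relative to $\|w_h\|_{W_2^m(\R^d)}$. Concretely, fix once and for all a function $\phi\in C_c^\infty(\R^d)$ that vanishes at the (finitely many) base nodes $x_1,\dots,x_M$ and satisfies $\lambda(\phi)\neq 0$; such a $\phi$ exists whenever $\lambda$ is not a finite linear combination of the point evaluations $\delta_{x_1},\dots,\delta_{x_M}$, which is the case for a genuinely homogeneous functional of positive scaling order. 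I would then set $w_h(x):=\phi(x/h)$, a bump concentrated in the ball of radius $h$ about the origin, and estimate the dual norm $\|\epsilon_h\|_{W_2^m(\R^d)^*}$ from below by the Rayleigh quotient $|\epsilon_h(w_h)|/\|w_h\|_{W_2^m(\R^d)}$.

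The two scaling computations are routine. First, since $D^\beta[\phi(\cdot/h)](x)=h^{-|\beta|}(D^\beta\phi)(x/h)$, the substitution $y=x/h$ gives $\|w_h\|_{W_2^m(\R^d)}^2=\sum_{|\beta|\le m}h^{d-2|\beta|}\|D^\beta\phi\|_{L_2(\R^d)}^2\le C\,h^{d-2m}$ for all $0<h\le 1$, the top‑order term dominating because $m>d/2$; hence $\|w_h\|_{W_2^m(\R^d)}\le C\,h^{-(m-d/2)}$. Second, by homogeneity of order $s$ one has $\lambda(w_h)=\lambda(\phi(\cdot/h))=h^{-s}\lambda(\phi)$ (for the model functional $\lambda(u)=D^\alpha u(0)$ with $|\alpha|=s$ this is just $D^\alpha w_h(0)=h^{-s}D^\alpha\phi(0)$, so no appeal to scaling outside the range $0<h\le1$ is needed).

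The decisive point is that $w_h$ annihilates the \emph{entire} nodal part of the error, \emph{regardless of the weights}: because $w_h(hx_j)=\phi(x_j)=0$, any approximation of the form $\lambda_{a(h),hX}(w_h)=\sum_j a_j(h)\,w_h(hx_j)$ vanishes. Therefore $\epsilon_h(w_h)=\lambda(w_h)=h^{-s}\lambda(\phi)$, and combining the two estimates yields $\|\epsilon_h\|_{W_2^m(\R^d)^*}\ge |\epsilon_h(w_h)|/\|w_h\|_{W_2^m(\R^d)}\ge c\,h^{m-s-d/2}$. If the rate were some $k>m-s-d/2$, then $\|\epsilon_h\|\le C'h^k=o(h^{m-s-d/2})$ as $h\to0$, contradicting this lower bound; hence the rate is at most $m-s-d/2$. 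By Theorem~\ref{theCREqui} the same conclusion holds in every $W_2^m(\Omega)^*$, and the argument for a general scalable $\lambda$ is verbatim the same, using only $\lambda(w_h)=h^{-s}\lambda(\phi)$.

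I expect the main obstacle to be guaranteeing the test function while keeping the bound \emph{uniform in the number of nodes}: the construction above chooses $\phi$ to vanish at the fixed base set $X$, so the rate exponent $m-s-d/2$ is independent of $|X|$ while only the constant $c$ may degrade as $|X|$ grows, which is exactly the asserted ``no matter how many nodes'' phenomenon. A fully $h$‑dependent node family would not be controlled by a single $\phi$, and for that I would fall back on the kernel/Fourier description of the dual norm, writing $\|\epsilon_h\|^2\sim\int (1+|\omega|^2)^{-m}\,|(i\omega)^\alpha-\sum_j b_j(h)e^{-i y_j(h)\cdot\omega}|^2\,d\omega$ and rescaling $\omega=\xi/h$ to factor out the expected power $h^{2(m-s)-d}$; the remaining task, namely a lower bound on the scaled weighted‑$L_2$ distance from the homogeneous symbol to the span of exponentials that is bounded away from zero as $h\to0$, is where the real work sits. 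For fixed $X$ this distance is a positive constant because the admissible symbols form a finite‑dimensional subspace, which recovers the same rate and confirms the elementary duality argument.
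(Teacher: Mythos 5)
Your proposal is correct and is essentially the paper's own proof: the same bump function vanishing on $X$ (hence annihilating the nodal part for arbitrary weights $a(h)$ on $hX$), the same scaling identities $\lambda(\phi(\cdot/h))=h^{-s}\lambda(\phi)$ and $\|\phi(\cdot/h)\|_{W_2^m(\R^d)}\leq h^{d/2-m}\|\phi\|_{W_2^m(\R^d)}$ for $0<h\leq 1$, and the same Rayleigh-quotient lower bound $\|\epsilon_h\|_{W_2^m(\R^d)^*}\geq c\,h^{m-s-d/2}$. Your closing worry about fully $h$-dependent node families lies outside the theorem's setting (the paper's convergence rates are defined for scaled copies $hX$ of a fixed $X$), so the Fourier fallback is unnecessary, and your explicit existence condition for $\phi$ only makes the paper's implicit assumption precise.
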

\begin{proof}
We need at least $m>d/2$ 
to let the nodal approximations $\lambda_{a,X}$
of \eref{eqapp} to be well-defined. Then we take a ``bump'' 
function $v\in W_2^m(\R^d)$ that vanishes on $X$ and has
$\lambda(v)\neq 0$.

Now we scale and consider $\lambda_{a(h),hX}$ as an approximation
on $hX$ with error functional
$$
\epsilon_h=\lambda-\lambda_{a(h),hX}.
$$
Then
$$
\begin{array}{rcl}
\epsilon_h(v(\cdot/h))
&=& \lambda(v(\cdot/h))-\lambda_{a(h),hX}(v(\cdot/h))\\
&=& h^{-s}\lambda(v)-0\\
\end{array}
$$
and
$$
\begin{array}{rcl}
\|v(\cdot/h)\|^2_{W_2^m(\R^d)}
&=&
\displaystyle{\sum_{|\alpha|\leq m}\int_{\R^d}|D^\alpha(v(\cdot/h))|^2}\\ 
&=&
\displaystyle{\sum_{|\alpha|\leq m}h^{-2|\alpha|}\int_{\R^d}|D^\alpha(v)(x/h)|^2dx}\\ 
&=&
\displaystyle{h^d\sum_{|\alpha|\leq m}h^{-2|\alpha|}\int_{\R^d}|D^\alpha(v)(y)|^2dy}\\ 
&\leq&
\displaystyle{h^{d-2m}\|v\|^2_{W_2^m(\R^d)}}\\ 
\end{array}
$$
leading to 
$$
\begin{array}{rcl}
\|\epsilon_h\|_{W_2^m(\R^d)^*}
&=&
\displaystyle{ \sup_{u\in W_2^m(\R^d)\setminus \{0\}}
\dfrac{|\epsilon_h(u)|}{\|u\|_{W_2^m(\R^d)}}}\\[0.5cm]
&\geq&
\displaystyle{\dfrac{|\epsilon_h(v(\cdot/h))|}{\|v(\cdot/h)\|_{W_2^m(\R^d)}}}\\[0.5cm]
&\geq&h^{-s}
\displaystyle{\dfrac{|\lambda(v)|}{\|v(\cdot/h)\|_{W_2^m(\R^d)}}}\\[0.5cm]
&\geq&h^{m-s-d/2}
\displaystyle{\dfrac{|\lambda(v)|}{\|v\|_{W_2^m(\R^d)}}}.
\end{array} 
$$
\end{proof}
This holds for all weights, including the non-scalable optimal ones,
and for all nodal point sets $X$.

Our next goal is to show that this rate is attainable 
for scalable stencils with sufficient polynomial exactness,
in particular for optimal stencils calculated via polyharmonic kernels. 
\begin{theorem}\label{theSobbound}
  Let $\lambda$ be a functional of scaling order $s$
  that is continuous on $W_2^\mu(\Omega)$ for some $\mu>d/2$, and
let $X$  allow a polynomially exact approximation to $\lambda$ of  
of some order $q\geq \mu>d/2$. 
Then any scalable 
stencil for approximation of $\lambda$ 
on $X$ with that exactness
has the optimal convergence rate
$m-s-d/2$ in $W_2^m(\Omega)$ for all $m$ with $\mu\leq m < q+d/2$.
In case $m=q+d/2$, the rate is at least $m-s-d/2-\epsilon=q-s-\epsilon$
for arbitrarily small $\epsilon >0$.
\end{theorem}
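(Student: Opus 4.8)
The plan is to pair the lower bound already available with a matching upper bound on the error functional norm. Theorem~\ref{theopt} shows that no nodal approximation can beat the rate $m-s-d/2$, so it suffices to prove that a scalable stencil with the assumed exactness attains it, i.e. that $\|\epsilon_h\|_{W_2^m(\R^d)^*}\le C h^{m-s-d/2}$ as $h\to 0$. By Theorem~\ref{theCREqui} the rates in $W_2^m(\Omega)$ and $W_2^m(\R^d)$ agree, so I would carry out the estimate on all of $\R^d$. The first move is to use scalability (Definition~\ref{defErrScaOrd}): since $\epsilon_h(u)=h^{-s}\epsilon_1(u(h\cdot))$ with $\epsilon_1=\lambda-\lambda_{a,X}$, one has $\|\epsilon_h\|_{W_2^m(\R^d)^*}=h^{-s}\sup_{u\ne 0}|\epsilon_1(u(h\cdot))|/\|u\|_{W_2^m(\R^d)}$, so the whole problem reduces to bounding $|\epsilon_1(u(h\cdot))|$ in terms of $h$ and $\|u\|_{W_2^m(\R^d)}$, with $\epsilon_1$ a fixed, $h$-independent functional.

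Next I would extract the polynomial exactness through a Bramble--Hilbert argument. Because $\mu>d/2$, point evaluations are bounded on $W_2^\mu(\Omega)$, hence $\epsilon_1\in W_2^\mu(\Omega)^*$, and by the assumed exactness it annihilates $\Pqd$. Writing $w:=u(h\cdot)$ and subtracting an optimal polynomial gives $|\epsilon_1(w)|\le\|\epsilon_1\|_{W_2^\mu(\Omega)^*}\inf_{p\in\Pqd}\|w-p\|_{W_2^\mu(\Omega)}\le C\,|w|_{W_2^k(\Omega)}$ with $k=\min(m,q)$, where $|\cdot|_{W_2^k}$ is the top-order (possibly fractional) Sobolev seminorm; the polynomials subtracted have degree below $k\le q$ and so lie in $\Pqd$, and $\mu\le k$ holds in both ranges by hypothesis. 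A dilation computation, exactly of the type in the proof of Theorem~\ref{theopt}, then yields the scaling $|w|_{W_2^k(\Omega)}=h^{k-d/2}\,|u|_{W_2^k(h\Omega)}$.

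The decisive step is converting $h^{k-d/2}|u|_{W_2^k(h\Omega)}$ into the target factor $h^{m-d/2}\|u\|_{W_2^m}$. When $\mu\le m\le q$ this is immediate: take $k=m$ and bound $|u|_{W_2^m(h\Omega)}\le\|u\|_{W_2^m(\R^d)}$, giving $|\epsilon_1(w)|\le C h^{m-d/2}\|u\|_{W_2^m}$. The genuinely new phenomenon, and what I expect to be the main obstacle, is the range $q<m<q+d/2$, where Bramble--Hilbert saturates at order $k=q$ and naively produces only $h^{q-d/2}$. The missing factor $h^{m-q}$ must be won from the smallness of the domain of integration: for $|\beta|=q$ the derivative $D^\beta u$ lies in $W_2^{m-q}(\R^d)$ with $0<m-q<d/2$, so the Sobolev embedding $W_2^{m-q}(\R^d)\hookrightarrow L_p(\R^d)$ with $p=2d/(d-2(m-q))$, combined with H\"older over the shrinking set $h\Omega$ of measure $\sim h^d$, gives $\int_{h\Omega}|D^\beta u|^2\le C h^{2(m-q)}\|D^\beta u\|_{W_2^{m-q}}^2$ and hence $|u|_{W_2^q(h\Omega)}\le C h^{m-q}\|u\|_{W_2^m}$. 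Combining, $|\epsilon_1(w)|\le C h^{q-d/2}h^{m-q}\|u\|_{W_2^m}=C h^{m-d/2}\|u\|_{W_2^m}$, so that $\|\epsilon_h\|_{W_2^m(\R^d)^*}\le C h^{m-s-d/2}$ in both ranges; note that the constraint $m<q+d/2$ is precisely what keeps $m-q<d/2$ and the embedding finite.

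Finally, the endpoint $m=q+d/2$ is handled by the same localization estimate at the critical smoothness $m-q=d/2$: the embedding $W_2^{d/2}\hookrightarrow L_p$ holds for every finite $p$ but fails at $p=\infty$, so choosing $p$ large yields $\int_{h\Omega}|D^\beta u|^2\le C_\epsilon h^{d-\epsilon'}\|u\|_{W_2^m}^2$ for arbitrarily small $\epsilon'>0$, degrading the rate to $m-s-d/2-\epsilon=q-s-\epsilon$, exactly as claimed. The remaining point requiring care is the legitimacy of the Bramble--Hilbert estimate and of the seminorm scaling for non-integer $m$ and $\mu$; here I would invoke the standard Dupont--Scott / Gagliardo-seminorm machinery on the fixed Lipschitz star-shaped domain $\Omega$, where all constants are independent of $h$.
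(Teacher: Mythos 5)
Your proposal is correct, and in the subcritical range $\mu\le m\le q$ it is essentially the paper's own argument: Bramble--Hilbert on the fixed star-shaped domain plus the dilation identity (\ref{equhhu}), with optimality supplied by Theorem~\ref{theopt}. In the range $q<m<q+d/2$ and at the endpoint $m=q+d/2$, however, you take a genuinely different route. The paper simply reruns its $L_2$ computation in the $L_p$-based space $W_p^q(\Omega)\supseteq W_2^m(\Omega)$ with $p\in[2,\infty)$ chosen so that $q-d/p=m-d/2$ (continuity of the error functional coming from $W_p^q(\Omega)\subseteq W_2^\mu(\Omega)$, since $q\ge\mu$); the full rate $h^{q-d/p}=h^{m-d/2}$ is then produced by the scaling of the $W_p^q$-seminorm alone, and the endpoint case follows by letting $p$ be arbitrarily large. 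You instead stay entirely in $L_2$-based spaces: Bramble--Hilbert saturates at order $q$ and yields $h^{q-d/2}\,|u|_{W_2^q(h\Omega)}$, and you recover the missing factor $h^{m-q}$ from the shrinking domain, via H\"older over $h\Omega$ (of measure $\sim h^d$) combined with the embedding $W_2^{m-q}(\R^d)\hookrightarrow L_p(\R^d)$, $p=2d/\bigl(d-2(m-q)\bigr)$, applied to the derivatives of order $q$. Note that your $p$ is exactly the exponent solving $q-d/p=m-d/2$, so the two proofs are rearrangements of the same embedding fact; what yours buys is that the Bramble--Hilbert/Dupont--Scott machinery is only ever invoked in Hilbert-space ($L_2$-based) norms, where the paper needs its $W_p^q$ version, and your localization step makes transparent that the constraint $m<q+d/2$ is precisely finiteness of the embedding exponent, with the endpoint loss $d/p$ arising for the same reason as in the paper. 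The paper's version is shorter (a one-line substitution into the existing estimate). Both treatments share the same residual technicality, which you correctly flag: the fractional-order polynomial approximation estimate and the scaling of the Gagliardo seminorm for non-integer $m$ and $\mu$, which the paper also leaves implicit (its display (\ref{equhhu}) is written only for integer $m$).
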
  
\begin{proof}
We first treat the case $m\leq q$.
By the Bramble-Hilbert lemma \cite{bramble-hilbert:1970-1}, 
the error functional defined by
$$
\epsilon(u)=\lambda(u)-  \lambda_{a,X}(u)
$$ 
is continuous on $W_2^m(\Omega)$
  and vanishes on $\P_m^d$. Then it has an error
bound
$$
|\epsilon(u)|
\leq \|\epsilon\|_{W_2^m(\Omega)^*}|u|_{W_2^m(\Omega)}\fa u\in W_2^m(\Omega).
$$ 
This leads to
$$
\begin{array}{rcl}
|h^{-s}\lambda(u(h\cdot))- h^{-s} \lambda_{a,X}(u(h\cdot))|
&=&
h^{-s}|\epsilon(u(h\cdot))|\\
&\leq &
h^{-s}\|\epsilon\|_{W_2^m(\Omega)^*}|u(h\cdot)|_{W_2^m(\Omega)}\\
&=& 
h^{-s}\|\epsilon\|_{W_2^m(\Omega)^*}h^{m-d/2}|u|_{W_2^m(h\Omega)}\\
&\leq& 
h^{-s}\|\epsilon\|_{W_2^m(\Omega)^*}h^{m-d/2}|u|_{W_2^m(\Omega)}
\end{array}
$$ 
where we used
\begin{equation}\label{equhhu} 
\begin{array}{rcl}
|u(h\cdot)|^2_{W_2^m(\Omega)}
&=& 
\displaystyle{\sum_{|\alpha|=m}\int_\Omega
\left|D^\alpha (u(h\cdot))(x)  \right|^2dx}\\
&=& 
\displaystyle{h^{2m}\sum_{|\alpha|=m}\int_\Omega
\left|D^\alpha (u)(hx)  \right|^2dx}\\
&=& 
\displaystyle{h^{2m-d}\sum_{|\alpha|=m}\int_{h\Omega}
\left|D^\alpha (u)(y)  \right|^2dy}\\
&=&
h^{2m-d}|u|^2_{W_2^m(h\Omega)}.
\end{array}
\end{equation}
For the case $q\leq m <q+d/2$ we repeat the argument, but now
in $W_p^q(\Omega)\supseteq W_2^m(\Omega)$
for $p\in [2,\infty)$ with $q-d/p=m-d/2$.
  Because of $q\geq \mu$ we also have $W_p^q(\Omega)\subseteq W_2^\mu(\Omega)$,
  guaranteeing continuity on $W_p^q(\Omega)$.
The corresponding proof steps are
$$
  \begin{array}{rcl}
    |h^{-s}\lambda(u(h\cdot))- h^{-s} \lambda_{a,X}(u(h\cdot))|
    &\leq &
    h^{-s}\|\epsilon\|_{W_p^q(\Omega)^*}h^{q-d/p}|u|_{W_p^q(\Omega)},\\
    |u(h\cdot)|^p_{W_p^q(\Omega)}
    &=& h^{pq-d}|u|^p_{W_p^q(\Omega)}.
\end{array}
$$
  For $m=q+d/2$, the space $W_2^m(\Omega)$ is embedded in $W_p^q(\Omega)$
  for arbitrary
  $p\in [2,\infty)$, and on that space we get
    the rate $q-s-d/p=m-s-d/2-d/p$.
\end{proof}
Theorem \ref{theSobbound}
proves optimality of the convergence rate \eref{eqOptRate},
  and it shows that the optimal rate is attained
  by {\em scalable} stencils whose point sets allow
  polynomial exactness of some order larger than $m-d/2$.
  
  In view of the {\em best compromise} situation,
  one can ask for the minimal polynomial exactness order
  $q$ that allows the optimal convergence rate
  for fixed $m$ and $d$.
    If $m-d/2$ is not an integer,
  this is $q:=\lceil m-d/2\rceil$ as in \eref{eqcompro}.
  In the exceptional case $m-d/2\in \N$,
  the order $m-d/2+1$ is sufficient for the optimal rate,
  but order $m-d/2$ can come arbitrarily close to it.
  We shall deal with this situation 
in Sections \ref{SecPHS} and  \ref{SecExa}.

Consequently, large orders of polynomial exactness will not pay off,
if smoothness is the limiting factor. 
If the size of the  point set $X$ is the limiting factor, we get 
\begin{corollary}\label{cortheCbound}
Let $\lambda$ be a functional of scaling order $s$ which is continuous
on $W_2^\mu(\Omega)$ with integer $\mu>d/2$, and 
let $X$  allow a polynomially exact approximation to $\lambda$ of  
of some order $q\geq \mu$. Then any scalable 
stencil for approximation of $\lambda$ 
on $X$ with that exactness
has convergence rate at least 
 $q-s$ in $W_2^m(\Omega)$ for all $m>q+d/2$.
\end{corollary}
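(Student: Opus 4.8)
The plan is to complement Theorem \ref{theSobbound}, which settled the range $\mu\le m\le q+d/2$, by pushing the exponent $p$ occurring in its proof all the way to $p=\infty$. For $m>q+d/2$ the functions in $W_2^m(\Omega)$ are so smooth that it is the polynomial exactness order $q$, and no longer the smoothness $m$, that caps the rate; accordingly I expect $q-s$ to fall out of a Bramble--Hilbert estimate performed in $W_\infty^q(\Omega)$ (equivalently $C^q$) instead of in $W_p^q(\Omega)$ with finite $p$. The point is that finite $p$ would only deliver $q-s-d/p$, whereas the limiting case $p=\infty$ contributes $q-d/p\to q$ and hence the full rate $q-s$.

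The decisive ingredient is the Sobolev embedding. Because $m-d/2>q$ is exactly the hypothesis $m>q+d/2$, the embedding theorem supplies a continuous inclusion $W_2^m(\Omega)\hookrightarrow C^q(\overline\Omega)\hookrightarrow W_\infty^q(\Omega)$ for the bounded scalable (hence star--shaped, and assumed Lipschitz) domain $\Omega$; this is the only place where $m>q+d/2$ is used. I would also record that the error functional $\epsilon:=\lambda-\lambda_{a,X}$ is continuous on $W_\infty^q(\Omega)$: since $q\ge\mu$ and $\Omega$ is bounded one has $W_\infty^q(\Omega)\hookrightarrow W_2^q(\Omega)\hookrightarrow W_2^\mu(\Omega)$, and $\lambda$ is continuous on $W_2^\mu(\Omega)$ by assumption, the integer hypothesis on $\mu$ making this embedding chain the standard one.

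With these two facts the estimate mirrors Theorem \ref{theSobbound} line by line with $p=\infty$. The Bramble--Hilbert lemma applied to $\epsilon$, which vanishes on $\Pqd$, yields
$$
|\epsilon(u)|\le \|\epsilon\|_{W_\infty^q(\Omega)^*}\,|u|_{W_\infty^q(\Omega)}\fa u\in W_\infty^q(\Omega),
$$
while the top--order seminorm scales as
$$
|u(h\cdot)|_{W_\infty^q(\Omega)}=h^{q}\,|u|_{W_\infty^q(h\Omega)}\le h^{q}\,|u|_{W_\infty^q(\Omega)},
$$
the inequality using $h\Omega\subseteq\Omega$ for $0<h\le 1$. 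Combining the two and invoking the embedding $W_2^m(\Omega)\hookrightarrow W_\infty^q(\Omega)$ gives
$$
|\epsilon_h(u)|=h^{-s}|\epsilon(u(h\cdot))|\le h^{\,q-s}\|\epsilon\|_{W_\infty^q(\Omega)^*}\,|u|_{W_\infty^q(\Omega)}\le C\,h^{\,q-s}\,\|u\|_{W_2^m(\Omega)},
$$
so that $\|\epsilon_h\|_{W_2^m(\Omega)^*}\le C h^{q-s}$, which is the claimed rate.

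The only genuine obstacle is the embedding bookkeeping rather than any hard estimate: one must verify that the scalable domain is regular enough for $W_2^m(\Omega)\hookrightarrow C^q(\overline\Omega)$ (a Lipschitz or cone condition suffices) and that $\epsilon$ is continuous on $W_\infty^q(\Omega)$, so that Bramble--Hilbert produces a bound involving the top--order seminorm alone. Everything else is the $p=\infty$ specialization of the preceding theorem. I would close by noting that the rate is sharp: by Theorem \ref{thePolOrd} the rate on all of $U$ cannot exceed $q_{max}(\lambda,X)-s\le q-s$, so the conclusion ``at least $q-s$'' is in fact ``exactly $q-s$'' whenever the exactness order equals $q_{max}(\lambda,X)$.
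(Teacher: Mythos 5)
Your proof is correct and is in substance the paper's own argument: the paper repeats the estimate of Theorem \ref{theSobbound} with Bramble--Hilbert applied in $W_2^q(\Omega)$ and then uses the embedding $W_2^m(\Omega)\hookrightarrow C^q(\overline{\Omega})$ (valid since $m>q+d/2$) to bound $\int_\Omega |D^\alpha u(hx)|^2\,dx$ by $C\|u\|^2_{C^q(\Omega)}$ uniformly in $h$, which is precisely your $p=\infty$ seminorm bound phrased in the $p=2$ setting. The only cosmetic difference is that you perform Bramble--Hilbert directly in $W_\infty^q(\Omega)$ rather than in $W_2^q(\Omega)$; the key mechanism --- continuity of the $q$-th derivatives removing the $d/p$ loss so that the scaling yields the full factor $h^{q-s}$ --- is identical in both versions.
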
 
\begin{proof}
  We repeat the proof of Theorem \ref{theSobbound},
  but now on $W_2^q(\Omega)$ and get
$$
\begin{array}{rcl}
|h^{-s}\lambda(u(h\cdot))- h^{-s} \lambda_{a,X}(u(h\cdot))|
&=&
h^{-s}|\epsilon(u(h\cdot))|\\
&\leq &
h^{-s}\|\epsilon\|_{W_2^q(\Omega)^*}|u(h\cdot)|_{W_2^q(\Omega)}.
\end{array}
$$ 
Then we use \eref{equhhu} replacing $m$ by $q$ there,
but insert functions $u\in W_2^m(\Omega)$ for $m>q+d/2$.
Then the $q$-th derivatives in \eref{equhhu}
will be continuous, proving
$$
\begin{array}{rcl}
|u(h\cdot)|^2_{W_2^q(\Omega)}
&=& 
\displaystyle{h^{2q}\sum_{|\alpha|=q}\int_\Omega
\left|D^\alpha (u)(hx)  \right|^2dx}\\
&\leq &C 
h^{2q}\|u\|_{C^q(\Omega)}.
\end{array}
$$
Thus the convergence rate
in $W_2^m(\Omega)$ 
is at least $q-s$.
\end{proof}
This argument used continuity
of higher derivatives to bound local integrals, as in 
\cite{davydov-schaback:2016-2}.

Note that
Corollary \ref{cortheCbound} produces only integer 
or half-integer convergence rates
while Theorem \ref{theSobbound} allows general non-integer rates.
We shall give examples in Section \ref{SecExa}.

To summarize, we get
convergence rates for scalable stencils as in Table \ref{tabSobRates}. 
For the case in the second row, the optimal convergence behavior
  is not reached for order $q$, but for order $q+1$ by applying the first
  row. For given $m$ and $d$, a scalable stencil
  with polynomial exactness
  order $\lfloor m-d/2\rfloor+1$ is sufficient for optimal convergence
  in $W_2^m(\Omega),\;\Omega\subset\R^d.$ By solving the system
  \eref{eqlinsys}, such stencils are easy to calculate, but
if the system is underdetermined, one should make good use of the additional
degrees of freedom. This topic is treated in \cite{davydov-schaback:2016-1}
by applying optimization techniques, while the next sections will 
focus on unique stencils obtained by polyharmonic kernels.
Because the latter come close to the kernels reproducing Sobolev spaces,
they should provide good approximations to the non-scalable optimal
approximations in Sobolev spaces. 
\begin{table}[hbt]\centering
\begin{tabular}{||c||c|c|}\hline
$m$ and $q$  & minimal rate  & optimal rate\\\hline
$m <q+d/2$ & $m-s-d/2$ & yes \\
  $m=q+d/2$ & $m-s-d/2-\epsilon,\;\epsilon>0 $ & no, 
  $m-s-d/2=q-s$ \\
$m>q+d/2$ & $q-s$& yes for  $q= q_{max}(\lambda,X)$\\
\hline
\end{tabular}
\caption{Convergence rates in $W_2^m(\R^d)$ for scalable stencils 
  defined on $W_2^\mu(\R^d)$ with polynomial exactness $q\geq \mu>d/2$.
  \label{tabSobRates}}
\end{table}

\section{Polyharmonic Kernels}\label{SecPHS}
For $m-d/2>0$ real, we define 
the polyharmonic kernel   
\begin{equation}\label{eqKmd}
H_{m,d}(r):=(-1)^{\lfloor m-d/2 \rfloor +1}
\left\{
\begin{array}{ll}
r^{2m-d}\log r, &2m-d \hbox{ even integer }\\ 
r^{2m-d},& \hbox{ else }
\end{array} 
\right\}
\end{equation}
up to a positive scalar multiple. 
This kernel is conditionally positive definite
of order 
$$
q(m-d/2):=\lfloor m-d/2 \rfloor +1.
$$
%

For comparison, the Whittle-Mat\'ern kernel generating 
Sobolev space $W_2^m(\R^d)$ is, up to a positive constant,
$$
S_{m,d}(r):=
K_{m-d/2}(r)r^{m-d/2}
$$
with the modified Bessel function of second kind.
The generalized $d$-variate 
Fourier transforms then are 
$$
\begin{array}{rcl}
\hat H_{m,d}(\omega)&=&\|\omega\|_2^{-2m},\\
\hat S_{m,d}(\omega)&=&(1+\|\omega\|_2^2)^{-m},
\end{array} 
$$
up to positive constants,
showing a similarity that we will not explore further at this point. 

While $S_{m,d}$ reproduces $W_2^m(\R^d)$, the polyharmonic kernel $H_{m,d}$
reproduces the {\em Beppo-Levi} space $BL_{m,d}$. 
This has a long history, see e.g. 
\cite{iske:1995-1, schaback:1997-3,iske:2003-1,%
wendland:2005-1,beatson-et-al:2005-1,iske:2011-1},
but we take a shortcut here and refer the reader to the background literature.
From the paper \cite{iske:2003-1} of A. Iske 
we take the very useful fact that optimal approximations in Beppo-Levi spaces 
using polyharmonic kernels are always scalable and can be stably and
efficiently calculated. We shall investigate the optimal convergence rate 
in Sobolev and Beppo-Levi space here, while \cite{iske:2003-1} contains
convergence rates in $C^m(\Omega)$.

A typical scale-invariance property of Beppo-Levi spaces is 
\begin{equation}\label{equhBL}
\|u(h\cdot)\|_{BL_{m,d}}=h^{m-d/2}\|u\|_{BL_{m,d}}  \fa u \in BL_{m,d}.
\end{equation}
Note the similarity between the above
formula and \eref{equhhu} used the proof of Theorem \ref{theSobbound},
because the classical 
$W_2^m(\R^d)$ seminorm coincides with the norm in $BL_{m,d}$.
\begin{theorem}\label{theGenPHSOpt}
Let 
a scalable approximation \eref{eqapp} of scaling order $s$  
be exact on the polynomials of some order 
$q\geq q(m-d/2)=\lfloor m-d/2 \rfloor +1$ and assume that $\lambda- \lambda_{a,X}$
is in $BL_{m,d}^*$. 
Then this stencil has the exact convergence rate $m-s-d/2$ in $BL_{m,d}$.
\end{theorem}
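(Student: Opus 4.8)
The plan is to follow the proof of Theorem~\ref{theSobbound}, but to take advantage of the fact that the Beppo-Levi norm is \emph{exactly} homogeneous under dilation, via \eqref{equhBL}. In the Sobolev estimate of Theorem~\ref{theSobbound} the full norm mixes several powers of $h$, which forces an inequality and yields the rate only as a lower bound (with an $\varepsilon$-loss at the endpoint $m=q+d/2$). Here the seminorm scales by a single clean power of $h$, so the analogous computation becomes an equality and delivers the rate \emph{exactly}.

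First I would record the setup. By hypothesis the error functional $\epsilon := \lambda - \lambda_{a,X}$ lies in $BL_{m,d}^*$, so $\|\epsilon\|_{BL_{m,d}^*}$ is finite; and we may assume $\epsilon$ does not vanish identically on $BL_{m,d}$, since otherwise the approximation is exact and there is nothing to prove. The role of the exactness hypothesis $q \ge q(m-d/2)=\lfloor m-d/2\rfloor+1$ is to ensure that $\epsilon$ annihilates the polynomial space with respect to which $H_{m,d}$ is conditionally positive definite; this is the admissibility condition that makes the kernel-based norm of $\epsilon$ meaningful and consistent with the assumed membership in $BL_{m,d}^*$.

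The core step is then the dual-norm computation. By scalability \eqref{eqerrfunct} the scaled error functional satisfies $\epsilon_h(u)=h^{-s}\epsilon(u(h\cdot))$. Writing out $\|\epsilon_h\|_{BL_{m,d}^*}$ as a supremum and substituting $w:=u(h\cdot)$, which ranges over all of $BL_{m,d}$ as $u$ does, I would use \eqref{equhBL} in the form $\|u\|_{BL_{m,d}}=h^{-(m-d/2)}\|w\|_{BL_{m,d}}$ (the seminorm is homogeneous for every positive dilation factor, not just $0<h\le 1$). Collecting the powers of $h$ gives the exact identity
$$
\|\epsilon_h\|_{BL_{m,d}^*}=h^{\,m-s-d/2}\,\|\epsilon\|_{BL_{m,d}^*}.
$$
Since $\|\epsilon\|_{BL_{m,d}^*}$ is a fixed positive constant, the upper and lower rate bounds coincide and the convergence rate is exactly $m-s-d/2$.

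The main point requiring care is the bookkeeping around the polynomial null space: one must verify that the exactness order $\lfloor m-d/2\rfloor+1$ matches the conditional positive-definiteness order of the polyharmonic kernel $H_{m,d}$, so that $\epsilon$ is genuinely an admissible functional to which \eqref{equhBL} may be applied under the dual norm. Beyond this there is no real analytic obstacle; the substitution is a bijection of the scalable space $BL_{m,d}$ onto itself, and the exact homogeneity does all the work.
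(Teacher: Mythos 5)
Your proof is correct and takes essentially the same route as the paper: both express the dual norm of the scaled error functional as a supremum and use the exact homogeneity (\ref{equhBL}) of the Beppo-Levi norm (equivalently, that the unit balls of the $u$ and the $u(h\cdot)$ coincide up to a factor) to obtain the identity $\|\epsilon_h\|_{BL_{m,d}^*}=h^{m-s-d/2}\|\epsilon\|_{BL_{m,d}^*}$, which gives the rate exactly rather than as a one-sided bound. Your explicit remarks on excluding the trivial case $\epsilon=0$ and on the exactness order matching the conditional positive definiteness order of $H_{m,d}$ are sound points the paper leaves implicit in its hypotheses.
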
 
\begin{proof}
We evaluate the norm of the error functional after scaling via
$$
\begin{array}{rcl}
\|\lambda-h^{-s} \lambda_{a,hX}\|_{BL_{m,d}^*}
&=&\displaystyle{   \sup_{\|u\|_{BL_{m,d}}\leq 1}|\lambda(u)-h^{-s} \lambda_{a,hX}(u)|}\\
&=&h^{-s}\displaystyle{   
\sup_{\|u\|_{BL_{m,d}}\leq 1}|\lambda(u(h\cdot))- \lambda_{a,X}(u(h\cdot))|}\\
&=&h^{-s+m-d/2}\displaystyle{   
\sup_{\|u(h\cdot)\|_{BL_{m,d}}\leq 1}|\lambda(u(h\cdot))- \lambda_{a,X}(u(h\cdot))|}\\
&=&h^{-s+m-d/2}\|\lambda- \lambda_{a,X}\|_{BL_{m,d}^*}
\end{array} 
$$
using that \eref{equhBL} implies that the unit balls of all $u$ and all $u(h\cdot)$
are the same up to a factor.
\end{proof}
\begin{corollary}\label{corPHS1}
Polynomial exactness of more than order $\lfloor m-d/2 \rfloor +1$ does not pay
off
in a higher convergence rate in Beppo-Levi space $BL_{m,d}$. \qed
\end{corollary}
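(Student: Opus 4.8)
The plan is to derive the statement directly from Theorem \ref{theGenPHSOpt}, whose conclusion is that the convergence rate in $BL_{m,d}$ is \emph{exactly} $m-s-d/2$. First I would observe that the hypothesis of that theorem asks only that the polynomial exactness order $q$ satisfy $q\geq \lfloor m-d/2\rfloor+1$, together with $\lambda-\lambda_{a,X}\in BL_{m,d}^*$; the resulting rate $m-s-d/2$ depends solely on $m$, $s$, and $d$ and is completely independent of the actual value of $q$ within the admissible range. Consequently, replacing a stencil of exactness order $q_0:=\lfloor m-d/2\rfloor+1$ by one of any higher order $q>q_0$ (on a possibly larger node set) cannot change the rate, which stays pinned at $m-s-d/2$.

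The crucial point, and the only thing that genuinely needs checking, is that Theorem \ref{theGenPHSOpt} delivers the rate \emph{exactly} rather than merely as a lower bound. This is visible in its proof, where the scaled error functional satisfies the identity
$$
\|\lambda-h^{-s} \lambda_{a,hX}\|_{BL_{m,d}^*}
=h^{\,m-s-d/2}\,\|\lambda- \lambda_{a,X}\|_{BL_{m,d}^*},
$$
an exact equality with positive constant $\|\lambda- \lambda_{a,X}\|_{BL_{m,d}^*}$ (positive because the stencil is not exact, i.e. $\lambda-\lambda_{a,X}$ is a nonzero element of $BL_{m,d}^*$). Since the norm of the error functional is thus \emph{equal} to a nonzero multiple of $h^{m-s-d/2}$, no faster decay is possible; the rate is bounded above as well as below by $m-s-d/2$.

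From here the conclusion is immediate: because any admissible stencil of order $q>\lfloor m-d/2\rfloor+1$ is in particular of order $\geq\lfloor m-d/2\rfloor+1$, Theorem \ref{theGenPHSOpt} applies verbatim and forces the \emph{same} exact rate $m-s-d/2$. Hence the extra polynomial exactness does not improve the Beppo-Levi convergence rate and ``does not pay off,'' which is the assertion. I do not anticipate any real obstacle; the entire content is the observation that the exact-rate formula of Theorem \ref{theGenPHSOpt} is insensitive to $q$ in the admissible range, so the only care needed is to read its conclusion as a two-sided (exact) statement rather than as a one-sided lower bound.
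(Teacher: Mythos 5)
Your proposal is correct and matches the paper's own reasoning: the paper states Corollary \ref{corPHS1} with an immediate \qed, treating it exactly as you do, namely as a direct consequence of the exact scaling identity $\|\lambda-h^{-s}\lambda_{a,hX}\|_{BL_{m,d}^*}=h^{m-s-d/2}\|\lambda-\lambda_{a,X}\|_{BL_{m,d}^*}$ established in the proof of Theorem \ref{theGenPHSOpt}, whose rate $m-s-d/2$ is insensitive to the exactness order $q$ once $q\geq\lfloor m-d/2\rfloor+1$. Your explicit observation that the theorem's conclusion is a two-sided (exact) rate, with positive constant whenever $\lambda-\lambda_{a,X}\neq 0$ in $BL_{m,d}^*$, is precisely the point the paper leaves implicit.
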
 
\begin{corollary}\label{corPHS2}
Let a point set $X=\{x_1,\ldots,x_M\}\subset\Omega\subset\R^d$  
be given such that
there is some approximation \eref{eqapp} that is exact on polynomials
of order  $\lfloor m-d/2 \rfloor +1$ and that has 
$\lambda- \lambda_{a,X}\in BL_{m,d}^*$. 
Then there is a weight vector $a^*\in\R^M$ that minimizes
$\|\lambda- \lambda_{a,X}\|_{BL_{m,d}^*}$ under all competing approximations, 
and the resulting stencil is $BL_{m,d}$-optimal under all stencils of at least that 
polynomial exactness. \qed
\end{corollary}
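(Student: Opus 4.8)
The statement has two parts, the existence of a minimizing weight vector $a^*$ and its optimality among all competitors of the prescribed exactness; since \emph{optimal} means \emph{attains the minimum}, the real content is the existence of the minimizer, and the plan is to obtain it from the Hilbert-space structure of $BL_{m,d}$. Writing $q:=\lfloor m-d/2\rfloor+1$, I would first identify the feasible set. Exactness of $\lambda-\lambda_{a,X}$ on $\Pqd$ is exactly the linear system \eqref{eqlinsys} for this $q$, so its solution set $A\subseteq\R^M$ is a nonempty (by hypothesis) closed affine subspace; moreover every approximation whose exactness order exceeds $q$ also lies in $A$, so $A$ already contains all competitors. Fixing the feasible $a_0$ supplied by the hypothesis, I would write $A=a_0+V$ with $V=\{b\in\R^M:\sum_{j}b_jp(x_j)=0\ \text{for all}\ p\in\Pqd\}$ the linear space of weight vectors whose point functional $\lambda_{b,X}$ annihilates $\Pqd$.

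The key observation I would exploit is that, although a single evaluation $\delta_{x_j}$ is not in $BL_{m,d}^*$ because it does not kill the polynomials spanning the null space of the Beppo-Levi seminorm, for $b\in V$ the combination $\lambda_{b,X}=\sum_j b_j\delta_{x_j}$ does annihilate $\Pqd$ and therefore lies in $BL_{m,d}^*$ with finite energy. Hence $a\mapsto\epsilon_a:=\lambda-\lambda_{a,X}$ is a \emph{well-defined affine map} from $A$ into the Hilbert space $BL_{m,d}^*$: for $a=a_0+b$ with $b\in V$ one has $\epsilon_a=\epsilon_{a_0}-\lambda_{b,X}$, both summands being in $BL_{m,d}^*$. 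In particular $\|\lambda-\lambda_{a,X}\|_{BL_{m,d}^*}$ is finite throughout $A$, and minimizing it over $A$ is the same as locating the element of least norm in the finite-dimensional affine subspace $\{\epsilon_a:a\in A\}\subset BL_{m,d}^*$, i.e. its nearest point to the origin.

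Existence then follows at once from the projection theorem, since a finite-dimensional affine subspace of a Hilbert space is closed and convex and hence has a unique element of minimal norm; pulling this back along the affine map gives a feasible $a^*$, which by construction minimizes $\|\lambda-\lambda_{a,X}\|_{BL_{m,d}^*}$ over $A$ and is therefore optimal among all competitors. For the explicit, computable form I would parametrize $a=a_0+Bt$ with the columns of $B$ a basis of $V$ and expand the squared objective as the convex quadratic $t\mapsto\|\epsilon_{a_0}\|_{BL_{m,d}^*}^2-2t^\top g+t^\top Gt$, whose Gram matrix $G_{kl}=\langle\lambda_{b_k,X},\lambda_{b_l,X}\rangle_{BL_{m,d}^*}$ is the polyharmonic kernel matrix restricted to $V$. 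The one point that must be checked here -- and the only real obstacle in the argument -- is that $G$ is positive definite; this is \emph{precisely} the conditional positive definiteness of $H_{m,d}$ of order $q(m-d/2)=\lfloor m-d/2\rfloor+1$ recorded after \eqref{eqKmd}, since for $0\neq b\in V$ one has $\langle\lambda_{b,X},\lambda_{b,X}\rangle_{BL_{m,d}^*}=\sum_{i,j}b_ib_jH_{m,d}(\|x_i-x_j\|)>0$ (the sign factor in \eqref{eqKmd} being there for exactly this purpose). Strict convexity then yields the unique minimizer $t^*=G^{-1}g$ and $a^*=a_0+BG^{-1}g$.

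Finally, to justify calling $a^*$ a \emph{stencil} I would invoke the scale invariance \eqref{equhBL}: under $X\mapsto hX$ with enforced scaling the feasible set and the objective merely rescale, so the minimizers at all scales are scaled copies of $a^*$ and $a^*$ indeed defines a scalable stencil; by Theorem \ref{theGenPHSOpt} it then attains the optimal rate $m-s-d/2$.
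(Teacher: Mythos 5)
Your proposal is correct and is exactly the argument the paper intends: the corollary is stated there with no written proof (flagged as immediate), resting on precisely the ingredients you spell out --- the competing weight vectors form a nonempty affine set whose error functionals lie in the Hilbert space $BL_{m,d}^*$, since differences of admissible weights annihilate $\P_q^d$ with $q=\lfloor m-d/2\rfloor+1$, so the projection theorem yields the norm-minimal element, with positive definiteness of the Gram matrix supplied by the conditional positive definiteness of $H_{m,d}$ of exactly that order. Your closing observation that the minimizer is scalable (hence a stencil in the paper's terminology) is likewise the fact the paper imports from Iske's work via the scale invariance \eref{equhBL}, so nothing essential is missing.
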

By applying Theorem \ref{theSobbound}, we get
  \begin{corollary}\label{corPH4Scal}
  One can use optimal scalable stencils
  obtained via polyharmonic kernels $H_{m,d}$ to get optimal convergence rates
  in $W_2^m(\Omega)$ for $\Omega\subset\R^d$, 
  provided that the underlying sets allow exactness
  on polynomials of order
  $q(m-d/2)=\lfloor m-d/2\rfloor+1$.\qed
  \end{corollary}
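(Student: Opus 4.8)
The statement follows by feeding the polyharmonic optimal stencil into Theorem~\ref{theSobbound}, so the plan is to verify that this stencil meets the hypotheses of that theorem with $q=q(m-d/2)$. First I would record the two structural facts about the $BL_{m,d}$-optimal stencil of Corollary~\ref{corPHS2}: by the cited result of Iske \cite{iske:2003-1} it is \emph{scalable} of the same scaling order $s$ as $\lambda$, and, since $H_{m,d}$ is conditionally positive definite of order $q(m-d/2)=\lfloor m-d/2\rfloor+1$, the optimal recovery reproduces $\P_q^d$ exactly for $q=\lfloor m-d/2\rfloor+1$. Thus it is precisely a scalable stencil with polynomial exactness of order $q=q(m-d/2)$, and such a stencil is available exactly when $X$ admits that exactness, which is the hypothesis of the corollary.

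Next I would check that $m$ falls in the range for which Theorem~\ref{theSobbound} delivers the \emph{optimal} rate rather than the borderline one, i.e. that $m<q+d/2$. With $q=\lfloor m-d/2\rfloor+1$ this reads $m-d/2<\lfloor m-d/2\rfloor+1$, which holds for every real $m$: when $m-d/2\notin\N$ it is the defining property of the floor, and when $m-d/2\in\N$ it reduces to $m-d/2<(m-d/2)+1$. Taking $\mu$ to be an admissible smoothness index with $d/2<\mu\le\min(m,q)$ on which $\lambda$ is continuous, the hypotheses $q\ge\mu>d/2$ and $\mu\le m<q+d/2$ of Theorem~\ref{theSobbound} are met, and that theorem yields the convergence rate $m-s-d/2$ in $W_2^m(\Omega)$, which is optimal by Theorem~\ref{theopt}.

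I expect the only genuinely delicate point to be the bookkeeping in the exceptional case $m-d/2\in\N$. There one must use exactness of order $\lfloor m-d/2\rfloor+1=m-d/2+1$, and \emph{not} the smaller value $m-d/2=\lceil m-d/2\rceil$: only the former keeps $m$ \emph{strictly} below the threshold $q+d/2$, placing us in the first row of Table~\ref{tabSobRates}, whereas exactness of order $m-d/2$ would leave us in the borderline second row and recover the optimal rate only up to an arbitrarily small loss~$\epsilon$. It is fortunate that the conditional order $q(m-d/2)=\lfloor m-d/2\rfloor+1$ of the polyharmonic kernel supplies exactly this higher exactness automatically, which is why polyharmonic stencils realize the optimal Sobolev rate without any special treatment of the integer case.
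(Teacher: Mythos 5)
Your proposal is correct and takes essentially the same route as the paper, whose proof is exactly the one-line application of Theorem~\ref{theSobbound} to the scalable (by Iske's result), polynomially exact stencil of order $q(m-d/2)=\lfloor m-d/2\rfloor+1$ produced by the polyharmonic kernel. Your explicit check that $m<q+d/2$ holds automatically---including when $m-d/2\in\N$, where the kernel's conditional order forces exactness $m-d/2+1$ rather than the borderline $m-d/2$---is precisely the observation the paper records in the remark immediately following the corollary.
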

  If $m-d/2$ is not an integer, the above order
is smallest possible
for optimal convergence. For $m-d/2$ integer, we have
$$
q(m-d/2)=\lfloor m-d/2\rfloor+1=m-d/2+1,
$$
and Theorem \ref{theSobbound}
suggests that we could come arbitrarily close to the optimal
convergence rate if we use order $q=m-d/2$. But then we cannot use
the polyharmonic kernel $H_{m,d}$.

However, there is a workaround. We construct a scalable stencil
via the polyharmonic kernel   $H_{m',d}$ for $m-1\leq m'<m$
using polynomial exactness of order $q(m'-d/2)=q$.
By Theorem \ref{theSobbound} this yields a convergence rate
  at least $m-s-d/2-\epsilon$ for all $\epsilon >0$, no matter how
  $m'$ was chosen.
\begin{corollary}\label{corSecCase2}
For the special situation $m=q+d/2$ in Table 
\ref{tabSobRates} there is a scalable stencil
with
polynomial exactness order $q$,
based on a polyharmonic kernel,  that has convergence rate
at least $m-s-d/2-\epsilon$
for all $\epsilon>0$. \qed
\end{corollary}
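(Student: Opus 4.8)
The plan is to sidestep the obstruction caused by $m-d/2=q$ being an integer. In exactly this case the polyharmonic kernel $H_{m,d}$ is conditionally positive definite of order $q(m-d/2)=\lfloor m-d/2\rfloor+1=q+1$, so it can only generate stencils that are polynomially exact of order $q+1$, one more than we are allowed. First I would therefore shift the smoothness parameter: pick some $m'$ with $m-1\le m'<m$ and build the stencil from $H_{m',d}$ rather than from $H_{m,d}$.

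The key computation is the floor of $m'-d/2$. From $m-1\le m'<m$ and $m-d/2=q$ we get $q-1\le m'-d/2<q$, and since $m'-d/2$ is strictly below the integer $q$ this forces $\lfloor m'-d/2\rfloor=q-1$, hence $q(m'-d/2)=\lfloor m'-d/2\rfloor+1=q$. Thus $H_{m',d}$ is conditionally positive definite of exactly order $q$, and its optimal approximations are polynomially exact of order $q$. Using the fact from \cite{iske:2003-1} that optimal approximations via polyharmonic kernels are always scalable, together with Corollary \ref{corPHS2} applied with $m$ replaced by $m'$, I obtain a \emph{scalable} stencil for $\lambda$ on $X$, based on the polyharmonic kernel $H_{m',d}$, that is exact on polynomials of order $q$.

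Finally I would feed this stencil into Theorem \ref{theSobbound}. Its polynomial exactness order is $q$, and we are asking for the convergence rate in $W_2^m(\Omega)$ with $m=q+d/2$, which is precisely the boundary case $m=q+d/2$ treated there. That case delivers a rate of at least $m-s-d/2-\epsilon=q-s-\epsilon$ for every $\epsilon>0$, and the value of this rate does not depend on the particular choice of $m'$ in $[m-1,m)$, which completes the argument.

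The hard part is not the rate itself, which is handed to us by Theorem \ref{theSobbound}, but guaranteeing that the polyharmonic stencil really exists, i.e.\ that the error functional $\lambda-\lambda_{a,X}$ lies in $BL_{m',d}^*$ as Corollary \ref{corPHS2} requires. Because $m'<m$, the Beppo-Levi space $BL_{m',d}$ is rougher than $BL_{m,d}$, so membership in its dual is a genuine condition and not an automatic consequence of $\lambda-\lambda_{a,X}\in BL_{m,d}^*$. I would discharge it by first checking $m'>d/2$: this holds throughout $m-1\le m'<m$ once $q\ge 1$ (and here $q=m-d/2\ge\mu>d/2\ge 1/2$ is a positive integer, so $q\ge 1$), after shrinking the interval slightly to keep the inequality strict in the extreme subcase $q=1$. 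Then, using the continuity of $\lambda$ on $W_2^\mu(\Omega)$ inherited from the setting of Theorem \ref{theSobbound} together with the order-$q$ exactness and a Bramble-Hilbert estimate \cite{bramble-hilbert:1970-1}, the error functional is bounded by the $BL_{m',d}$ seminorm, which yields $\lambda-\lambda_{a,X}\in BL_{m',d}^*$ and thereby the existence of the desired stencil.
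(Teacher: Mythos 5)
Your proposal is correct and takes essentially the same route as the paper: replace $H_{m,d}$ by $H_{m',d}$ with $m-1\le m'<m$, observe that $q(m'-d/2)=\lfloor m'-d/2\rfloor+1=q$ so the optimal (and, by Iske's result, scalable) polyharmonic stencil is exact of order exactly $q$, and invoke the boundary case $m=q+d/2$ of Theorem \ref{theSobbound} to get the rate $m-s-d/2-\epsilon$ independently of the choice of $m'$. Your additional verifications---the floor computation, the condition $m'>d/2$ (with the slight shrinking of the interval when $q=1$), and the membership $\lambda-\lambda_{a,X}\in BL_{m',d}^*$ required by Corollary \ref{corPHS2}---are details the paper leaves implicit, and you handle them correctly.
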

\section{Stable Error Evaluation}\label{SecStEE}
In the most interesting cases, the leading term of the 
error of a scalable stencil
in Sobolev space can be stably calculated via polyharmonic kernels.
To prove this, we show now that the 
polyharmonic kernels $H_{m,d}$ arise naturally 
as part of the kernels $S_{m,d}$ reproducing Sobolev space $H^m(\R^d)$. 
The latter 
have expansions as series in $r$, beginning with a finite number of even powers
with alternating signs. Such even powers, when written as
$r^{2k}=\|x-y\|_2^{2k}$ are polynomials in $x$ and $y$. After these even powers,
the next term
is a polyharmonic kernel:
\begin{theorem}\label{theExp}
The first non-even term in the expansion of $\sqrt{\frac{2}{\pi}}
K_{n+1/2}(r)r^{n+1/2}$ into powers of $r$ for integer $n\geq 0$
is the polyharmonic kernel 
$$
r^{2n+1}\dfrac{(-1)^{n+1}}{(2n+1)(2n-1)(2n-3)\cdots 1}= 
r^{2n+1}\dfrac{(-1)^{n+1}2^n\,n!}{(2n+1)!}.
$$
The first non-even term in the expansion
of $K_n(r)r^n$ for integer $n\geq 0$ is the polyharmonic kernel
$(-1)^{n+1}r^{2n}\log(r)\frac{2^{-n}}{n!}$.
\end{theorem}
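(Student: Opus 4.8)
The plan is to insert the classical small-$r$ series expansions of the modified Bessel function $K_\nu$ and simply read off the first term that is not an even power of $r$, treating the odd-dimension case ($\nu=n+1/2$ half-integer) and the even-dimension case ($\nu=n$ integer) separately, since these have structurally different expansions. The decomposition into Bessel-$I$ pieces is preferable to the finite closed form $K_{n+1/2}(r)=\sqrt{\pi/(2r)}\,e^{-r}\sum_k\cdots$, because the latter obscures the even/odd structure that the statement relies on.

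For the half-integer order I would start from $K_\nu(r)=\frac{\pi}{2\sin(\pi\nu)}(I_{-\nu}(r)-I_\nu(r))$ together with $I_\nu(r)=\sum_{k\ge 0}\frac{(r/2)^{2k+\nu}}{k!\,\Gamma(\nu+k+1)}$. Multiplying by $r^\nu$ shows that $r^\nu I_{-\nu}(r)=2^\nu\sum_{k\ge 0}\frac{(r/2)^{2k}}{k!\,\Gamma(k+1-\nu)}$ is a pure even-power series, whereas $r^\nu I_\nu(r)=2^{-\nu}r^{2\nu}\sum_{k\ge0}\frac{(r/2)^{2k}}{k!\,\Gamma(\nu+k+1)}$ is $r^{2\nu}$ times an even-power series. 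For $\nu=n+1/2$ the exponent $2\nu=2n+1$ is odd, so every term from $r^\nu I_{-\nu}$ is even and the first non-even term is exactly the $k=0$ term of the second piece, with coefficient $-\frac{\pi}{2\sin(\pi\nu)}\cdot\frac{2^{-\nu}}{\Gamma(\nu+1)}$. After multiplying by $\sqrt{2/\pi}$ and substituting $\sin(\pi(n+\tfrac12))=(-1)^n$ and $\Gamma(n+\tfrac32)=\frac{(2n+2)!}{4^{n+1}(n+1)!}\sqrt\pi$, this collapses to $(-1)^{n+1}\frac{2^n n!}{(2n+1)!}$; the equivalent form $\frac{(-1)^{n+1}}{(2n+1)(2n-1)\cdots 1}$ then follows from the identity $(2n+1)!=(2n+1)!!\,2^n n!$.

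For the integer order $\nu=n$ the factor $1/\sin(\pi\nu)$ is singular, so I would instead invoke the standard integer-order expansion $K_n(r)=\frac12(r/2)^{-n}\sum_{k=0}^{n-1}\frac{(n-k-1)!}{k!}(-r^2/4)^k+(-1)^{n+1}\ln(r/2)\,I_n(r)+(-1)^n\frac12(r/2)^n\sum_{k\ge0}[\psi(k+1)+\psi(n+k+1)]\frac{(r^2/4)^k}{k!(n+k)!}$. After multiplying by $r^n$, the first and third sums are manifestly even-power series, so the only non-even (logarithmic) contribution comes from $(-1)^{n+1}\ln(r/2)\,r^n I_n(r)$, whose leading term is $(-1)^{n+1}\ln(r/2)\,2^{-n}r^{2n}/n!$; its genuinely non-even ($\log r$) part is $(-1)^{n+1}\frac{2^{-n}}{n!}r^{2n}\log r$, as claimed, the $\ln 2$ piece being absorbed into the even-power terms.

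The two series substitutions are routine; the only real work is bookkeeping. The main obstacle I anticipate is the constant reduction in the half-integer case: converting the factors $\frac{\pi}{2(-1)^n}$, the power $2^{-n-1/2}$, the $\sqrt{2/\pi}$ prefactor, and $\Gamma(n+\tfrac32)$ into the clean double-factorial form requires matching all $2$-powers and the $\sqrt\pi$ exactly, and it is precisely here that a stray factor of $2$ or a sign is easiest to lose. A secondary subtlety is justifying the phrase \emph{first non-even term}: the decomposition above makes evenness of all lower-order terms manifest in the half-integer case, while in the integer case one must note that every term except the logarithmic contribution is an even power (including the $\ln 2$ shift), so that the leading $\log r$ term is unambiguously the first non-even one.
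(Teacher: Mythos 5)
Your proof is correct, but for the half-integer case it takes a genuinely different route from the paper -- in fact the paper starts from exactly the finite closed form you set aside as obscuring the even/odd structure. From DLMF 10.39.2 the paper writes $\sqrt{2/\pi}\,K_{n+1/2}(r)r^{n+1/2}=e^{-r}p_n(r)=:q_n(r)$, derives the recursion $-rq_{n-1}(r)=q_n'(r)$ from the Bessel derivative formula, hence $-q_{k-1,n-1}=(k+1)\,q_{k+1,n}$ for the power-series coefficients, and then proves \emph{by induction} that $q_{2k-1,n}=0$ for $1\leq k\leq n$; the first odd coefficient falls out of iterating $-q_{2n-1,n-1}=(2n+1)\,q_{2n+1,n}$ down to $q_{1,0}=-1$, giving the double-factorial form $(-1)^{n+1}/(2n+1)!!$ with no Gamma-function bookkeeping at all. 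Your decomposition $K_\nu=\frac{\pi}{2\sin(\pi\nu)}\left(I_{-\nu}-I_\nu\right)$ buys the opposite trade: the evenness of all lower-order terms is manifest by inspection (the step the paper needs an induction for, since $e^{-r}p_n(r)$ mixes parities term by term), but you pay for it in the constant reduction you correctly flagged as the delicate point -- and that reduction does check out: $\sqrt{2/\pi}\cdot(-1)^{n+1}\frac{\pi}{2}\cdot\frac{2^{-n-1/2}}{\Gamma(n+3/2)}=(-1)^{n+1}\frac{\sqrt{\pi}\,2^{-n-1}}{\Gamma(n+3/2)}=\frac{(-1)^{n+1}}{(2n+1)!!}=(-1)^{n+1}\frac{2^n\,n!}{(2n+1)!}$, using $\Gamma(n+3/2)=\frac{(2n+1)!!}{2^{n+1}}\sqrt{\pi}$, which agrees with your equivalent form of that identity. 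For the integer case the two proofs essentially coincide: both rest on DLMF 10.31.1, the paper merely compressing your three explicit sums into ``even series plus $(-1)^{n+1}z^n\log(z/2)I_n(z)$'' and reading off $q_n(0)=2^{-n}/n!$ from 10.25.2; your explicit remark that the $\ln 2$ shift is absorbed into the even part is the same observation the paper makes implicitly by keeping $\log(z/2)$ intact, and it is worth stating, since the theorem's coefficient is attached to $\log r$ rather than $\log(r/2)$.
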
 
\begin{proof}
Equation 10.39.2 of \cite{NIST:2015-1} has $n=0$ of 
$$
\sqrt{\frac{2}{\pi}}
K_{n+1/2}(r)r^{n+1/2}=q_n(r)=e^{-r}p_n(r)
$$
with a polynomial $p_n$ of degree at most $n, \;p_0(r)=1,\;q_0(r)=e^{-r}$.
It can easily be shown that $rp_{n-1}(r)+p_n'(r)=p_n(r)$ holds,
using the derivative of the above expression, and similarly 
one gets
$$
-rq_{n-1}(r)= q_n'(r)
$$
from that derivative formula.
If we make it explicit
by
$$
q_n(r)=:\displaystyle{\sum_{j=0}^\infty q_{j,n}r^j   },
$$
we get
$$
\begin{array}{rcl}
-q_{k-1,n-1}
&=& q_{k+1,n}(k+1),\, k,n\geq 1\\ 
0&=&q_{1,n},\;n\geq 1.\\
\end{array}
$$
The assertion $q_{2k-1,n}=0$ for $1\leq k\leq n$ is
true for $k=1$ and all $n\geq 1$. Assume it to be 
true for $k$ and all $n\geq k$.
Then for all $n\geq k\geq 1$, 
$$
\begin{array}{rcl}
0=-q_{2k-1,n}
&=& q_{2k+1,n+1}(2k+1),\, 2k\geq 1, n\geq 0\\ 
\end{array}
$$
proves the assertion.
The first odd term of the kernel expansion
is $q_{2n+1,n}r^{2n+1}$, and its
coefficient  has the recursion
$$
\begin{array}{rcl}
-q_{2n-1,n-1}
&=& q_{2n+1,n}(2n+1),\, n\geq 1.\\ 
\end{array}
$$
For the other case we use equation (10.31.1) of \cite{NIST:2015-1} 
in shortened form as 
$$
K_n(z)z^n=p_n(z^2)+(-1)^{n+1}z^n\log(z/2)I_n(z)
$$
with an even power series $p_n(z^2)$, and due to (10.25.2) of 
\cite{NIST:2015-1} 
we have $I_n(z)=z^nq_n(z^2)$ with an even power series $q_n(z^2)$
with $q_n(0)=\frac{2^{-n}}{n!}$. Thus
$$
K_n(z)z^n=p_n(z^2)+(-1)^{n+1}z^{2n}\log(z/2)q_n(z^2),
$$
and the first non-even term of the expansion of $K_n(r)r^n$ is
the polyharmonic kernel
$$
(-1)^{n+1}r^{2n}\log(r)q_n(0)=(-1)^{n+1}r^{2n}\log(r)\frac{2^{-n}}{n!}.
$$
\end{proof}
We now are ready to show that
a good approximation of the error in Sobolev space can
be calculated stably via the error in Beppo-Levi space,
i.e. via polyharmonic kernels: 
\begin{theorem}\label{theComp}
Assume a scalable stencil of scalability order $s$ 
on a set $X\subset\R^d$ 
to be given with polynomial exactness $q$. For 
all integer $m$ with $\lfloor m-d/2\rfloor+1\leq q$, its error norm can be
evaluated on all Beppo-Levi 
spaces $BL_{m,d}$ and on
Sobolev space $W_2^m(\R^d)$. The convergence rate in both cases 
then is
$m-s-d/2$, and the quotient of errors converges to 1 for $h\to 0$,
if the scalar factors in the Sobolev and polyharmonic kernel
are aligned properly, namely as given in Theorem \ref{theExp}.
\end{theorem}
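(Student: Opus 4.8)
The plan is to compute both error norms by the standard kernel quadratic form and to show, via the expansion in Theorem \ref{theExp}, that the two forms agree up to terms of higher order in $h$. Recall that for a (conditionally) positive definite kernel $K$ with native space $\mathcal N_K$, an error functional $\epsilon=\lambda-\lambda_{a,X}$ annihilating the polynomials against which $K$ is conditionally positive definite has squared dual norm $\|\epsilon\|_{\mathcal N_K^*}^2=\epsilon^x\epsilon^y K(x,y)$, where $\epsilon^x$ denotes the action in the first argument. First I would check that the hypothesis $\lfloor m-d/2\rfloor+1\le q$ makes both evaluations legitimate: the polyharmonic kernel $H_{m,d}$ is conditionally positive definite of order $q(m-d/2)=\lfloor m-d/2\rfloor+1$ by \eqref{eqKmd}, while the Whittle--Mat\'ern kernel $S_{m,d}$ reproducing $W_2^m(\R^d)$ is unconditionally positive definite, so polynomial exactness of order $q$ guarantees $\epsilon\in BL_{m,d}^*\cap W_2^m(\R^d)^*$ together with the quadratic-form representation in both spaces.

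The central step is to feed the expansion of Theorem \ref{theExp} into the Sobolev quadratic form. Writing $S_{m,d}(\|x-y\|)$ as a leading block of even powers $\|x-y\|^{2k}$, followed by the polyharmonic term $H_{m,d}$ and a remainder of strictly higher powers (and, in the even-integer case $2m-d\in\N$, a logarithmic term), I would observe that every even power $\|x-y\|^{2k}$ with $k\le\lfloor m-d/2\rfloor$ is a polynomial in $(x,y)$ all of whose monomials $x^\alpha y^\beta$ satisfy $|\alpha|+|\beta|=2k$, hence $\min(|\alpha|,|\beta|)\le k\le q-1$. Thus for each such monomial at least one of $\epsilon(x^\alpha),\epsilon(x^\beta)$ vanishes, giving $\epsilon^x\epsilon^y\|x-y\|^{2k}=0$. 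Consequently the whole polynomial block drops out of $\epsilon^x\epsilon^y S_{m,d}$, and the leading surviving contribution is exactly $\epsilon^x\epsilon^y H_{m,d}$, i.e. the Beppo--Levi form, once the scalar factors are aligned as in Theorem \ref{theExp}.

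Next I would track the $h$-dependence. Applying the scaling identity \eqref{eqerrfunct} in each argument separately yields, for a homogeneous kernel $\|x-y\|^p$, the exact relation $\epsilon_h^x\epsilon_h^y\|x-y\|^p=h^{p-2s}\,\epsilon_1^x\epsilon_1^y\|x-y\|^p$. For the polyharmonic power $p=2m-d$ this reproduces the rate of Theorem \ref{theGenPHSOpt}, namely $\|\epsilon_h\|_{BL_{m,d}}=h^{m-s-d/2}\|\epsilon_1\|_{BL_{m,d}}$, in agreement with the scale invariance \eqref{equhBL}; every remainder power $p>2m-d$ scales with a strictly larger exponent and is therefore of higher order relative to the leading term. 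In the even-integer case the non-homogeneity of $r^{2m-d}\log r$ produces an extra factor $h^{2m-d}\log h$ multiplying the polynomial $\|x-y\|^{2m-d}$ (here $k=m-d/2\le q-1$), which is annihilated by $\epsilon^x\epsilon^y$, so the spurious $\log h$ cancels and exact power scaling survives. Combining, both norms behave like $h^{m-s-d/2}$, so the convergence rate is $m-s-d/2$ in each, and
$$
\frac{\|\epsilon_h\|_{W_2^m(\R^d)}^2}{\|\epsilon_h\|_{BL_{m,d}}^2}
=1+O\!\left(h^{\delta}\right)\xrightarrow[h\to0]{}1
$$
for the positive gap $\delta$ between the polyharmonic exponent and the first contributing remainder exponent; taking square roots gives convergence of the quotient of errors to $1$.

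The main obstacle I anticipate is rigorously justifying the termwise action of $\epsilon^x\epsilon^y$ on the infinite expansion and the uniform control of the remainder: one must ensure that the tail of the series for $S_{m,d}$ is smooth enough for the point evaluations in $\lambda_{a,X}$ and for $\lambda$ to be applied termwise, and that the resulting bound is genuinely of higher order in $h$ uniformly as $h\to0$ rather than merely formally. I would handle this by splitting $S_{m,d}$ into a finite Taylor segment (through the first contributing remainder power) plus an explicit smooth tail, and then estimating the tail's contribution through the fixed, $h$-independent geometry of $X$ together with the homogeneity bookkeeping above.
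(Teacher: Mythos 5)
Your proposal follows essentially the same route as the paper's proof: both evaluate the squared error norms via the kernel quadratic form $\epsilon(h)^x\epsilon(h)^y K(x,y)$, insert the expansion of Theorem \ref{theExp}, kill the even-power block by polynomial exactness (the paper cites \cite{schaback:2005-2} where you argue $\min(|\alpha|,|\beta|)\le k\le q-1$ directly), track the $h$-scaling of the surviving polyharmonic leading term and higher-order remainder, and split $\log(hr)=\log h+\log r$ in the even-dimensional case so that the spurious $\log h$ term is annihilated. Your closing remarks on justifying termwise application of $\epsilon^x\epsilon^y$ to the series and on uniform tail control address a point the paper passes over silently, but this is a refinement of the same argument, not a different one.
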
 
\begin{proof}\label{ProtheComp}
The squared norm of the stencil's error functional can be evaluated 
on Sobolev space $W_2^m(\R^d)$ by 
$$
\begin{array}{rcl}
&&
\epsilon(h)^x\epsilon(h)^yK(x,y)\\
&=&
\displaystyle{   h^{-2s}\left(\lambda^x\lambda^yK(hx,hy) 
-2\sum_{j=1}^Ma_j\lambda^yK(hx_j,hy)   \right.}\\
&&
+\displaystyle{  \left.
-2\sum_{j,k=1}^Ma_ja_k\lambda^yK(hx_j,hx_k)   \right)}\\
\end{array} 
$$
where we used $K(x,y)$ as a shortcut for $K_{m-d/2}(\|x-y\|_2)\|x-y\|_2^{m-d/2}$
and ignore scalar multiples. 
Now we insert the series expansions of Theorem \ref{theExp}.
For odd $d$ and $m-d/2=n+1/2$ we
have, up to constant factors,
$$
K_{m-d/2}(r)r^{m-d/2}=\displaystyle{\sum_{j=0}^{m-d/2-1/2}f_{2j}r^{2j}}
+f_{2m-d}r^{2m-d} +\sum_{k>2m-d}f_kr^k 
$$
and
$$
K_{m-d/2}(hr)(hr)^{m-d/2}=\displaystyle{\sum_{j=0}^{m-d/2-1/2}f_{2j}h^{2j}r^{2j}}
+f_{2m-d}h^{2m-d}r^{2m-d} +\sum_{k>2m-d}f_kh^kr^k. 
$$
If we hit this twice with $\epsilon(h)$, i.e. forming
$$
\|\epsilon(h)\|^2_{H^m(\R^d)}=
\epsilon(h)^x\epsilon(h)^yK(h\|x-y\|_2),
$$
all even terms with exponents
$2j<2q=2p+2s>2m-d$ go away \cite{schaback:2005-2}, 
and we are left with the polyharmonic part and
higher-order terms. The odd ones are all polyharmonic, and the even ones
remain only from exponent $2q=2p+2s>2m-d$ on, i.e. they behave like 
$h^{2m-d+1}$ or higher-order terms. The polyharmonic terms 
$f_{2m-d+2k}h^{2m-d+2k}r^{2m-d+2k}$ representing $BL_{m+k,d}$ 
require polynomial exactness 
of order $m-d/2+1/2+k$ which is satisfied for $0\leq k<q-m+d/2$,
and double action of the error functional on these terms 
has a scaling law of $h^{2m+2k-2s-d}$. This means that the dominating term
is the one with $k=0$, and the squared error norm behaves 
like $h^{2m-d-2s}$ as in the $BL_{m,d}$ case.

Now we treat even dimensions, and use the expansion
$$
K_{m-d/2}(r)r^{m-d/2}=\displaystyle{\sum_{j=0}^{\infty}f_{2j}r^{2j}}
+g_{2m-d}\log(r)r^{2m-d}+ \log(r)\displaystyle{\sum_{2k>2m-d}g_{2k}r^{2k}}
$$
up to constant factors. With scaling, it reads as
$$
\begin{array}{rcl}
&&K_{m-d/2}(hr)h^{m-d/2}r^{m-d/2}\\
&=&
\displaystyle{\sum_{j=0}^{\infty}f_{2j}h^{2j}r^{2j}}
+g_{2m-d}\log(hr)h^{2m-d}r^{2m-d}
+ \log(hr)\displaystyle{\sum_{2k>2m-d}g_{2k}h^{2k}r^{2k}}\\
&=&
\displaystyle{\sum_{j=0}^{\infty}f_{2j}h^{2j}r^{2j}}
+g_{2m-d}h^{2m-d}\log(r)r^{2m-d}+g_{2m-d}\log(h)h^{2m-d}r^{2m-d}\\
&&
+ \displaystyle{\sum_{2k>2m-d}g_{2k}h^{2k}r^{2k}\log(r)} 
+\displaystyle{\sum_{2k>2m-d}g_{2k}h^{2k}\log(h)r^{2k}}\\
\end{array}
$$
We now have $q=p+s\geq 2m-d+2$ and 
hitting the scaled kernel twice will annihilate all even powers up to
and including exponents $2j<2q=2p+2s\geq 2m-d+2$, i.e. the remaining even powers
scale like $h^{2m-d+2}\log(h)$ or higher.  The rest is a sum of 
polyharmonic kernels $H_{m+k,d}$ for $k\geq 0$, and we know the scaling laws
of them, if the stencil has enough polynomial exactness.  Again, the term
with $k=0$ is the worst case, leading to a summand of type $h^{2m-d-2s}$
in the squared norm of the error that cannot be cancelled by the other terms of
higher order. 
\end{proof}
\section{Stencil Convergence}\label{SecSC}
Here, we prove that the renormalized weights of the optimal
non-scalable approximations in Sobolev space  converge to 
the weights of a scalable stencil. 
\begin{theorem}\label{theASA}
Consider the $W_2^m(\R^d)$-optimal approximation weights $a^*(h)$ on a 
set $X\subset\R^d$ for a functional of scaling order $s$. Assume that
$X$ allows a unique scalable stencil with weights $\hat a$ 
that is exact on polynomials
of order $q$. Then
$$
\|a^*(h)h^s-\hat a\|_\infty \leq Ch^{m-q+1-d/2}
$$
if $m-d/2 < q$, and
$$
\|a^*(h)h^s-\hat a\|_\infty \leq Ch^{1}
$$
if $m-d/2 \geq q$.
\end{theorem}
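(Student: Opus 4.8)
The plan is to realize both weight vectors as solutions of kernel linear systems and to view the $W_2^m$-optimal system as a structured perturbation of the one defining the scalable stencil. Since $W_2^m(\R^d)$ is the reproducing kernel Hilbert space of the Whittle-Mat\'ern kernel $S_{m,d}$, the optimal weights $a^*(h)$ on the scaled set $hX$ minimize the error-functional norm and hence solve the normal equations associated with $S_{m,d}(h\,\cdot)$. Writing $b(h):=a^*(h)h^s$ and using that $\lambda$ has scaling order $s$, these take the scale-normalized form
$$
\sum_{k=1}^M b_k(h)\,S_{m,d}(h\|x_j-x_k\|)=\lambda^z\!\big[S_{m,d}(h\|x_j-z\|)\big],\qquad 1\le j\le M.
$$
On the other hand, the unique scalable stencil $\hat a$ of exactness order $q$ is pinned by the reproduction conditions \eref{eqlinsys}, i.e. by $\sum_k\hat a_k p(x_k)=\lambda(p)$ for all $p\in\P_q^d$; by the uniqueness hypothesis these $Q=\dim\P_q^d$ conditions already determine $\hat a$ completely. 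First I would record both characterizations and fix a basis of $\P_q^d$ so that reproduction becomes an invertible (full row rank) linear map $T$ on weight space with $\hat a=T^{-1}\big(\lambda(p_i)\big)_i$.

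Next I would insert the expansion of Theorem~\ref{theExp}, $S_{m,d}(r)=\sum_{2l<2m-d}f_{2l}r^{2l}+\kappa H_{m,d}(r)+\rho(r)$, into the scaled system. After scaling $r\mapsto hr$ the even powers with $2l<2m-d$ become polynomials in $(x,z)$ carried by coefficients $h^{2l}$; they form the dominant, low-rank ``polynomial block'' of the matrix and, on the right-hand side, are reproduced exactly by $\hat a$ as long as their degree stays below $q$. The term $\kappa h^{2m-d}H_{m,d}$ (together with an $h^{2m-d}\log h$ polynomial contribution in even dimensions) supplies the genuine polyharmonic information, while $\rho(hr)$ collects the higher even powers and the higher polyharmonic kernels $H_{m+1,d},H_{m+2,d},\dots$. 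The key structural observation is that the polynomial block plays the role of reproduction constraints: as $h\to0$ it forces $b(h)$ onto the affine set $Tb=(\lambda(p_i))_i$ through which $\hat a$ is defined, while the polyharmonic block selects the particular solution. Thus the scaled $W_2^m$ system is a perturbation of the constrained (saddle-point) system solved by $\hat a$.

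The conclusion would then follow from saddle-point perturbation theory. Peeling the dominant polynomial block off the scaled matrix turns the $W_2^m$ system into a constrained Brezzi-type system whose $h\to0$ limit is exactly the system defining $\hat a$; the uniqueness hypothesis furnishes the inf-sup (equivalently Schur-complement) condition, so the solution depends Lipschitz-continuously, with an $h$-independent constant, on the Gram block and the right-hand side. It then remains to bound the mismatch between the perturbed and limiting data, which is supplied by the higher even powers and higher polyharmonic kernels in $\rho(hr)$ together with the even powers of degree too large for $\hat a$ to reproduce. A short computation with the expansion identifies the leading order of this mismatch, yielding the rate $h^{m-q+1-d/2}$ when smoothness is binding ($m-d/2<q$) and the rate $h^{1}$ when exactness is binding ($m-d/2\ge q$); one checks that the two exponents agree, both equal to $h^1$, at the transition $m-d/2=q$. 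Feeding these mismatch bounds into the uniform stability estimate gives the two stated inequalities for $\|a^*(h)h^s-\hat a\|_\infty=\|b(h)-\hat a\|_\infty$.

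The main obstacle is precisely the uniform stability needed to turn the data mismatch into a weight bound with a constant independent of $h$. The scaled matrix mixes an $O(1)$ polynomial block with an $O(h^{2m-d})$ polyharmonic correction, so it is severely ill-conditioned as $h\to0$, and a naive inversion would reintroduce negative powers of $h$ that destroy the rate. The real work is to organize the system as a block saddle point and to establish the $h$-independent inf-sup bound from the \emph{unique} exactness-$q$ stencil, so that the constrained system is uniformly invertible and the mismatch-to-solution constant is uniform in $h$. Alongside this, a careful accounting of which even powers survive the action of the error functional under exactness order $q$ is what pins the exponents $m-q+1-d/2$ and $1$ precisely.
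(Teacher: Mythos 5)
You have identified a plausible-looking program, but two of its load-bearing steps are genuine gaps, and one is arguably fatal as stated. First, the uniform-in-$h$ stability: after inserting the expansion of Theorem~\ref{theExp} into the scaled Gram system for $S_{m,d}$, the ``polynomial block'' is not a single $O(1)$ constraint block but a graded family of even-power contributions $f_{2l}h^{2l}r^{2l}$ at every scale $h^0,h^2,\ldots$ strictly below $h^{2m-d}$; the $h\to 0$ degeneration is the multiscale ``flat limit'' of kernel systems, not a two-block Brezzi system, and the uniqueness of the order-$q$ stencil --- injectivity of one fixed $Q\times M$ Vandermonde-type matrix --- does not by itself supply an $h$-independent inf-sup constant for that family. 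Second, the identification of the limit: the polynomial degrees carried by the even powers are tied to $m-d/2$ (namely $2l<2m-d$), not to $q$, so the limiting affine set need not be the order-$q$ exactness system defining $\hat a$; in the regime $m-d/2\ge q$ the block contains degrees beyond the exactness that $X$ supports, and in the regime $m-d/2<q$ it can miss monomials of degree up to $q-1$. Third, the ``short computation'' that is supposed to produce the exponents $m-q+1-d/2$ and $1$ is exactly where $q$ must enter, and it is left undone; verifying that the two exponents agree at $m-d/2=q$ does not validate the derivation.

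The paper avoids all of this with a two-step elementary argument you could adopt: insert the rescaled optimal weights into the fixed unscaled exactness system to get, for each $|\alpha|<q$, the identity $\sum_{j=1}^M\bigl(h^sa_j^*(h)-\hat a_j\bigr)x_j^\alpha = h^{s-|\alpha|}\bigl(\lambda_{a^*(h),hX}(x^\alpha)-\lambda(x^\alpha)\bigr)$; then bound the right-hand side by the \emph{already proven} convergence rates of the optimal approximation tested on (localized) monomials --- rate $m-s-d/2$ from Theorems~\ref{theopt} and \ref{theSobbound} when $m-d/2<q$, rate $q-s$ from Corollary~\ref{cortheCbound} when $m-d/2\ge q$ --- which gives residuals of size $O(h^{m-|\alpha|-d/2})$ resp.\ $O(h^{q-|\alpha|})$, worst at $|\alpha|=q-1$, i.e.\ exactly $h^{m-q+1-d/2}$ resp.\ $h^1$. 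Uniqueness of $\hat a$ means the exactness matrix has full column rank, so inverting that single $h$-independent linear system converts the residual bound into the stated weight bound. In effect, the convergence-rate theorems already encapsulate the kernel-analytic content you were trying to re-derive through flat-limit machinery; without them, your saddle-point route would first have to establish a strictly harder multiscale stability theorem before the exponent bookkeeping could even begin.
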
 
\begin{proof} We consider the uniquely solvable 
system of polynomial exactness as
$$
\sum_{j=1}^M\hat a_jx_j^\alpha=\lambda(x^\alpha),\;0\leq |\alpha|<q
$$ 
and in scaled form as 
$$
\sum_{j=1}^Mh^{-s}\hat a_j(hx_j)^\alpha=\lambda(x^\alpha)
,\;0\leq |\alpha|<q
$$
which is the unscaled system where the equation for $x^\alpha$ is multiplied by
$h^{|\alpha|-s}$, namely
$$
\sum_{j=1}^Mh^{-s}\hat a_j(hx_j)^\alpha=h^{|\alpha|-s}\lambda(x^\alpha)=\lambda(x^\alpha)
,\;0\leq |\alpha|<q
$$
which is no contradiction because scaling order $s$ implies
$\lambda(x^\alpha)=0$ for $|\alpha|\neq s$.
Then we insert the rescaled optimal Sobolev weights into the unscaled system 
to get
\begin{equation}\label{eqlinsyscheck}
\begin{array}{rcl}
&&
h^s\sum_{j=1}^Ma_j^*(h)x_j^\alpha\\
&=&
h^{s-|\alpha|}\sum_{j=1}^Ma_j^*(h)(hx_j)^\alpha\\
&=&
h^{s-|\alpha|}\lambda_{a^*(h),hX}(x^\alpha)\\
&=&
h^{s-|\alpha|}(\lambda_{a^*(h),hX}(x^\alpha)-\lambda(x^\alpha))+h^{s-|\alpha|}
\lambda(x^\alpha)\\
&=&
h^{s-|\alpha|}(\lambda_{a^*(h),hX}(x^\alpha)-\lambda(x^\alpha))+
\lambda(x^\alpha)
\end{array} 
\end{equation}
and
$$
\begin{array}{rcl}
\sum_{j=1}^M(h^sa_j^*(h)-\hat a_j)x_j^\alpha
&=&h^{s-|\alpha|}(\lambda_{a^*(h),hX}(x^\alpha)-\lambda(x^\alpha)).
\end{array}
$$
If we insert the convergence rate $m-s-d/2$ for the optimal
Sobolev approximation in the case $m-s-d/2  <   q-s$ 
or $m-d/2  <  q$, the right-hand side of this system
converges to zero  with rate $m-|\alpha|-d/2\geq m-(q-1)-d/2\geq 1$
and this implies
\begin{equation}\label{eqstenrate}
h^sa_j^*(h)-\hat a_j ={\cal O}(h^{m-(q-1)-d/2}) \hbox{ for }h\to 0. 
\end{equation}
If we have $m-d/2\geq  q$, we insert the rate $q-s$ 
and get the rate $q-|\alpha|\geq 1$ for the right-hand side.
\end{proof} 
\section{Examples}\label{SecExa}
First, we demonstrate numerically that the convergence 
rate 
$$
\min(m-d/2-s,q_{max}(\lambda,X)-s)
$$
for approximations in $W_2^m(\R^d)$ 
to functionals $\lambda\in W_2^m(\R^d)^*$ with scaling order $s$ 
is optimal, even among unscaled
approximations. This was verified in many cases
including dimensions 2 and 3
using MAPLE$^\copyright$ with
extended precision.
The number of decimal digits had to be beyond 100 in extreme situations.
All the loglog plots of $\|\epsilon(h)\|_{W_2^m(\R^d)}$ versus
$h$ show the standard linear behaviour for $h\to 0$, 
if enough decimal digits are used
and if started with small $h$ values. Therefore,
they are suppressed here. Instead, we present convergence 
rate estimates by plotting 
$$
\dfrac{\log(\|\epsilon_{h_{i+1}}\|_{W_2^m(\R^d)})-\log(\|\epsilon_{h_i}\|_{W_2^m(\R^d)})}%
{\log(h_{i+1})-\log(h_i)}
$$
against $h_i$.

For a specific case, we take $M=18$ random points in 2D and approximate
the Laplacian. Then $s=2$ and $q_{max}(\lambda,X)=5$ leading to the
expected convergence rate $\min(m-3,3)$ as a function of smoothness. 
Figure \ref{figsobopton18pts} shows the cases $m=3.75$ and $m=6.25$
with the expected rates $0.75$ and 2, respectively. 
These correspond to situations where either smoothness $m$ or size of $X$
restrict the convergence rate. 
\begin{figure}[hbt] 
\begin{center}
\includegraphics[width=6cm,height=6cm]{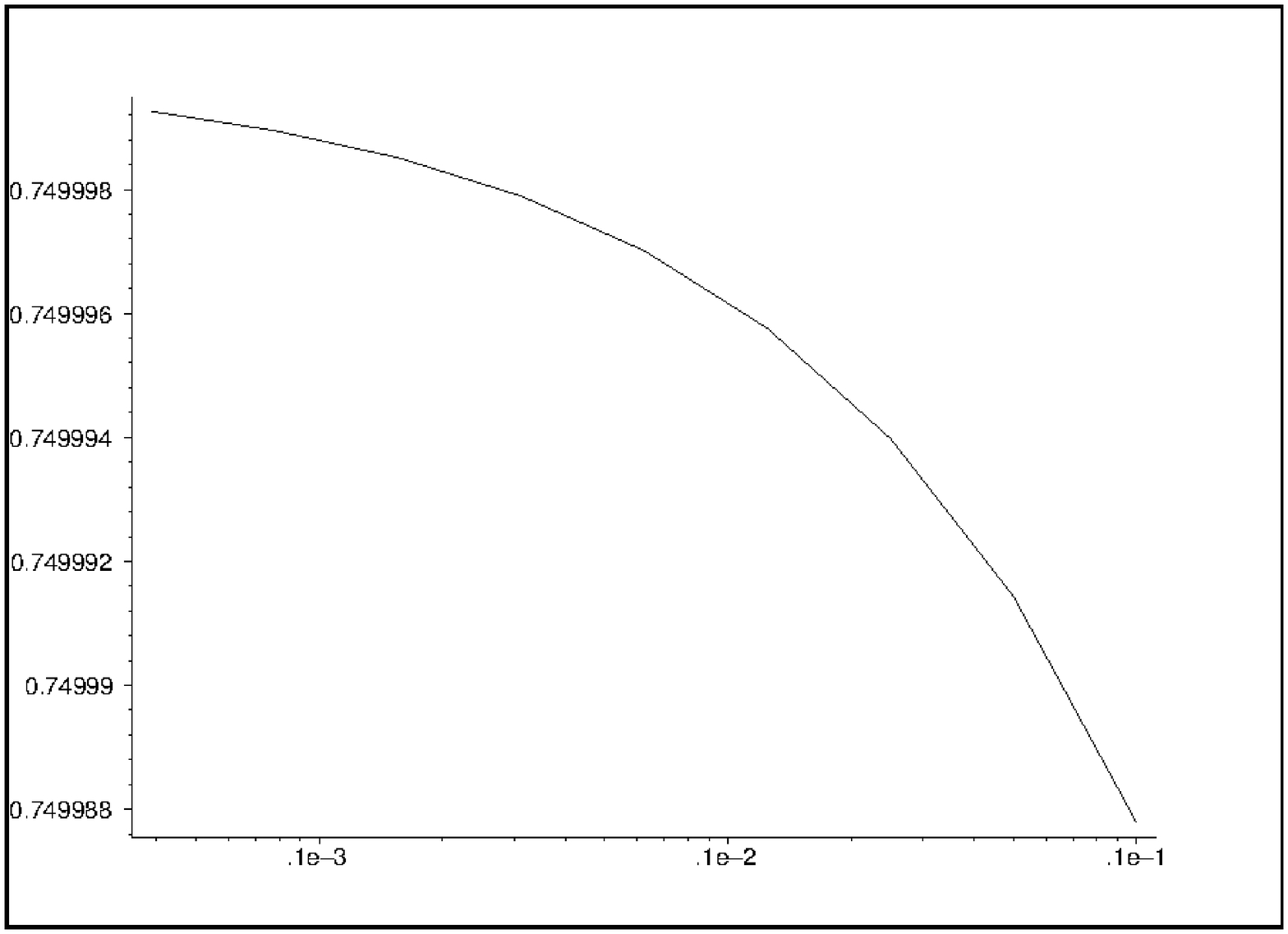}
\includegraphics[width=6cm,height=6cm]{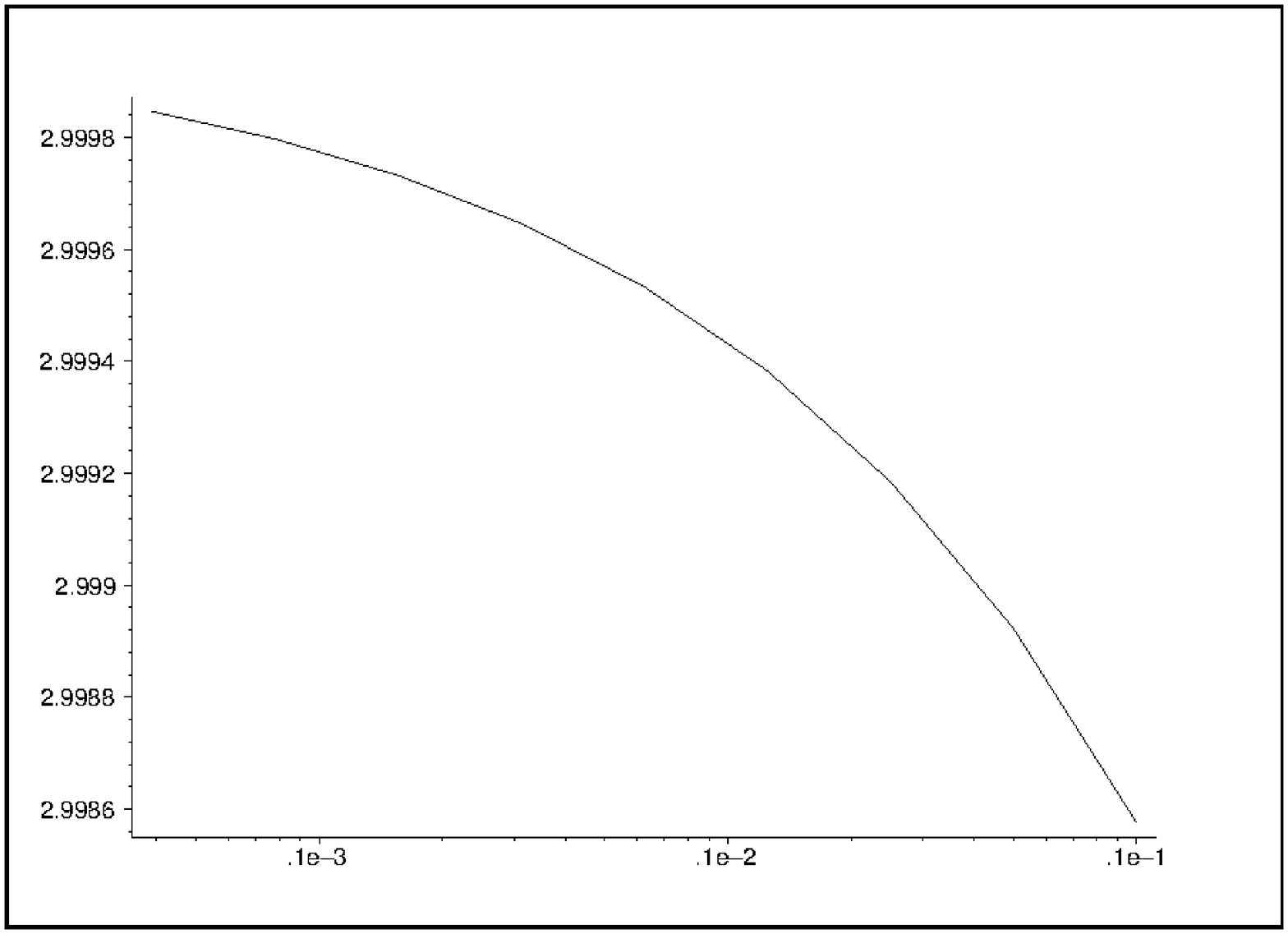}
\caption{Convergence rate estimates of the optimal $\epsilon_h$ in $W_2^{3.75}(\R^2)$ 
and $W_2^{6.25}(\R^2)$ 
approximating the 
Laplacian on $18$ general 
points as function of $h$\label{figsobopton18pts}}
\end{center}
\end{figure}

For illustration of the optimal compromise situation in 
\eref{eqcompro}, Figure 
\ref{figcompro1} shows the 
convergence
rate 1 for approximation of the Laplacian in 3D on only 10 points
in general position assuming smoothness $m=4.5$.
By Table \ref{tabSobRates} we expect a convergence rate between
  $m-s-d/2-\epsilon=1-\epsilon$ and 1 for all $\epsilon >0$ when using
  polynomial exactness order $q=m-d/2=3$, but the true optimal
  convergence could be like $h\log(h)$. The issue cannot be visually decided. 
\begin{figure}[hbt] 
\begin{center}
\includegraphics[width=7cm,height=7cm]{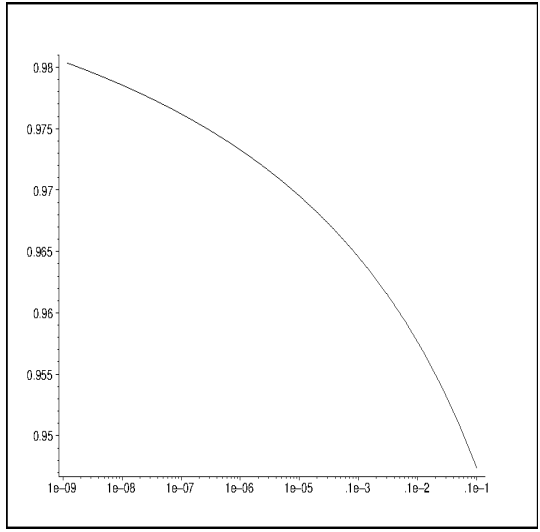}
\caption{Convergence rate estimates of the optimal $\epsilon_h$ in $W_2^{4.5}(\R^3)$ approximating
the Laplacian on $10$ general points as function of $h$%
\label{figcompro1}}
\end{center}
\end{figure}

Test runs with the scalable approximations based on polynomial exactness
show exactly the same behaviour, since they have the same convergence rate. 
To illustrate the ratio between the 
errors of scalable 
polyharmonic stencils and unscaled optimal approximations, 
Figure \ref{Figquot}
shows the error ratio in the 2D equilibrium case with 10 points and $m=q=4$,
tending to 1 for $h\to 0$. The same remark as for the $m=4.5,\,d=3$
case applies here.
\begin{figure}[hbt] 
\begin{center}
\includegraphics[width=7cm,height=7cm]{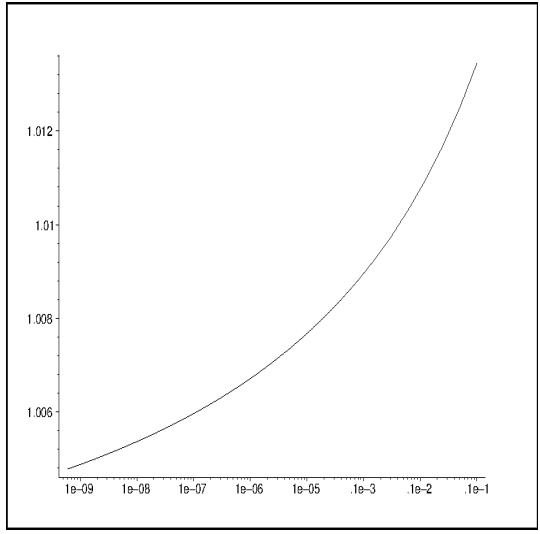}
\caption{Quotient between errors of polyharmonic and optimal
Sobolev approximations as functions of $h$%
\label{Figquot}}
\end{center}
\end{figure}

To deal with the special situation of $m-d/2$ being an integer 
in Corollary \ref{corSecCase2}
via polyharmonic kernels, we
take 6 points in $\R^2$ with $q=q_{max}=3$ for the Laplacian
with optimal convergence rate $m-2-d/2=1$ for $m=4$.
Working in $BL_{4,2}$ would need 10 points.  
A unique scalable stencil is obtained from $BL_{m',2}$ with
polynomial exactness order
$q(m',2)=3$ for all $3\leq m'<4$ and the convergence rate 
is at least $m-s-d/2-\epsilon=1-\epsilon$ for all $\epsilon >0$
by Table \ref{tabSobRates}.
The corresponding convergence rate estimate for $m'=3.5$  is in Figure
\ref{Figsoboptm4ph3p5on6pts}, and there is no visible
  $\log(h)$ factor.
\begin{figure}[hbt] 
\begin{center}
\includegraphics[width=7cm,height=7cm]{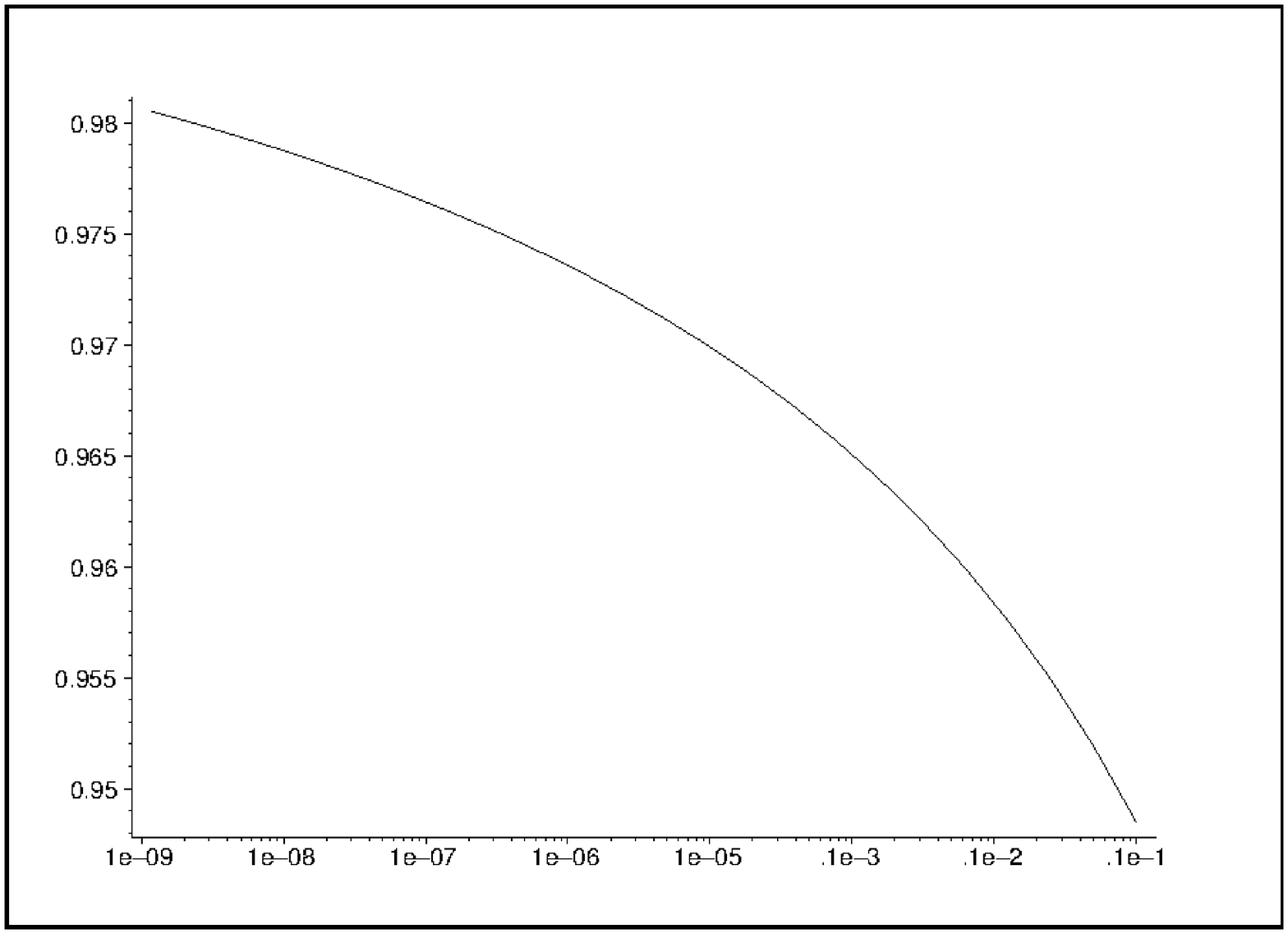}
\caption{Convergence rate 
estimate for the error norm of $\epsilon_h$ in $W_2^{4}(\R^2)$ approximating
the Laplacian on $6$ general points 
by a stencil of polynomial exactness of order 3
\label{Figsoboptm4ph3p5on6pts}}
\end{center}
\end{figure}

To see whether a $\log(h)$ term can be present in the situation
of integer $q=m-d/2$,  we take $m=d=2,\;q=1,\;s=0,$ 
i.e. interpolation. We need just a single point
$x\in\R^2$ with $\|x\|_2=1$ for exactness on constants. The kernel is
$\phi(r)=rK_1(r)=1+\frac{1}{2}r^2\log r +{\cal O}(r^2)$ with $\phi(0)=1$.
The optimal recovery for $\lambda(u)=u(0)$ from $u(hx)$ is the kernel
interpolant, i.e. $u(hx)\phi(\|\cdot-hx\|_2)$, and the approximation error
is
$$
u(0)-u(hx)\phi(\|hx\|_2)=u(0)-u(hx)\phi(h).
$$
In the dual of $W_2^2(\R^2)$ the square
of the norm of the error functional is
$$
\begin{array}{rcl}
  \|\delta_0-\phi(h)\delta_{hx}\|^2_{{W_2^2}^*(\R^2)}
  &=& \phi(0)-\phi(h)^2\\
  &=& -h^2\log(h)+{\cal O}(h^2)\\
\end{array}
$$
due to MAPLE. Since the standard error bound
$$
|u(0)-u(hx)\phi(h)|\leq
\|\delta_0-\phi(h)\delta_{hx}\|_{{W_2^2}^*(\R^2)}
\|u\|_{W_2^2(\R^2)}
$$
is sharp, and since we constructed the optimal recovery, we have that
the convergence for $q=1$ is  only $h|\log(h)|^{1/2}$ and not
like the optimal behaviour
$h^{m-0-d/2}=h$ in Sobolev space $W_2^2(\R^2)$.
To reach the optimal rate,
we need a polynomial exactness order $q\geq 2$
by  Table \ref{tabSobRates}, i.e. at least three
non-collinear points. For curiosity, note that
the above analysis works for all even dimensions,
provided that smoothness $m=1+d/2$ is varying accordingly.

The suboptimal nearest-neighbor interpolation by constants has
$$
\begin{array}{rcl}
  \|\delta_0-\delta_{hx}\|^2_{{W_2^2}^*(\R^2)}
  &=& 2-2\phi(h)\\
  &=& -h^2\log(h)+{\cal O}(h^2)
\end{array}
$$
and a more exact expansion via MAPLE
shows that this is larger than the squared
error for optimal one-point interpolation
in $W_2^{1+d/2}(\R^d)$ by ${\cal O}(\log^2(h)h^4)$.

In several numerical examples we verified the stencil convergence
proven in Theorem \ref{theASA}, but the observed convergence rates 
turned out to be better
than the proven ones. In particular, choosing 15 points in general position
in $\R^2$ with $q=5$ led to a convergence rate
$\min(2,2m-10)$ for $m\geq 5$ 
instead of $\min(1,m-5)$ in Theorem \ref{theASA}.
This seems to be a consequence of {\em superconvergence}
\cite{schaback:1999-1,schaback:2016-5}, but needs further 
work.

We now check approximation
of the Laplacian in the native space of the Gaussian
in Figure \ref{figgauss30}. This should behave like $m=\infty$
in \eref{eqOptRate} and thus show a convergence rate $q_{max}(\lambda,X)-s$.
We used 256 decimal digits for that example 
and took a set of 30 random points in 2D. Then $q_{max}(\Delta,X)=7$
and the observed convergence rate is indeed $q_{max}-s=5$. Furthermore,
this rate is attained already for
a scalable stencil that is polynomially exact of order $7$ on these points.
We chose the optimal scalable polyharmonic stencil in $BL_{7,2}$ for this, and 
the ratio of the error norms was about 5. See \cite{larsson-et-al:2013-1}
for a sophisticated way to circumvent the instability of calculating optimal 
non-scalable stencils for Gaussian kernels, but this paper suggests
to use scalable stencils calculated via polyharmonic kernels instead.
\begin{figure}[hbt] 
\begin{center}
\includegraphics[width=6cm,height=6cm]{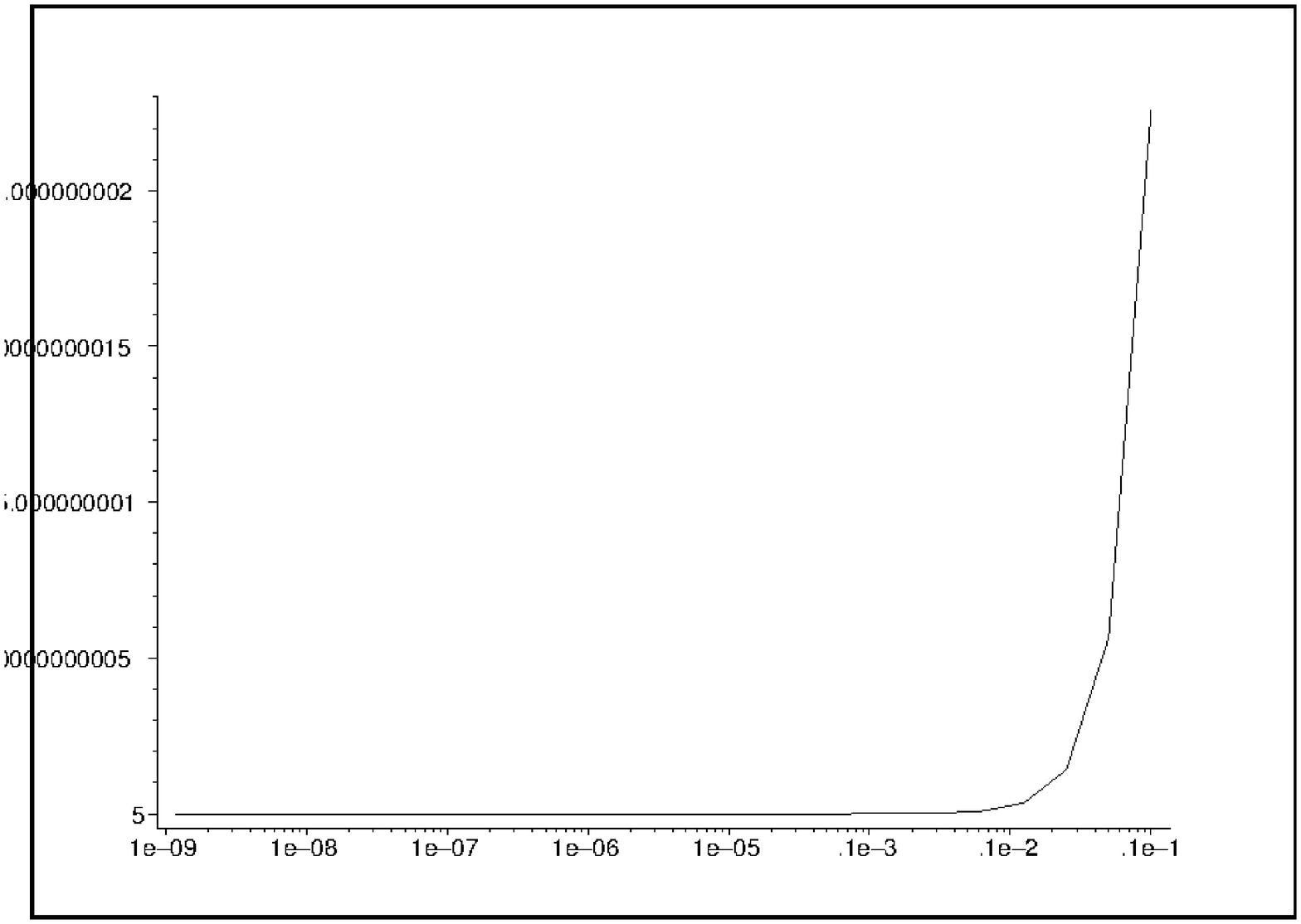}
\includegraphics[width=6cm,height=6cm]{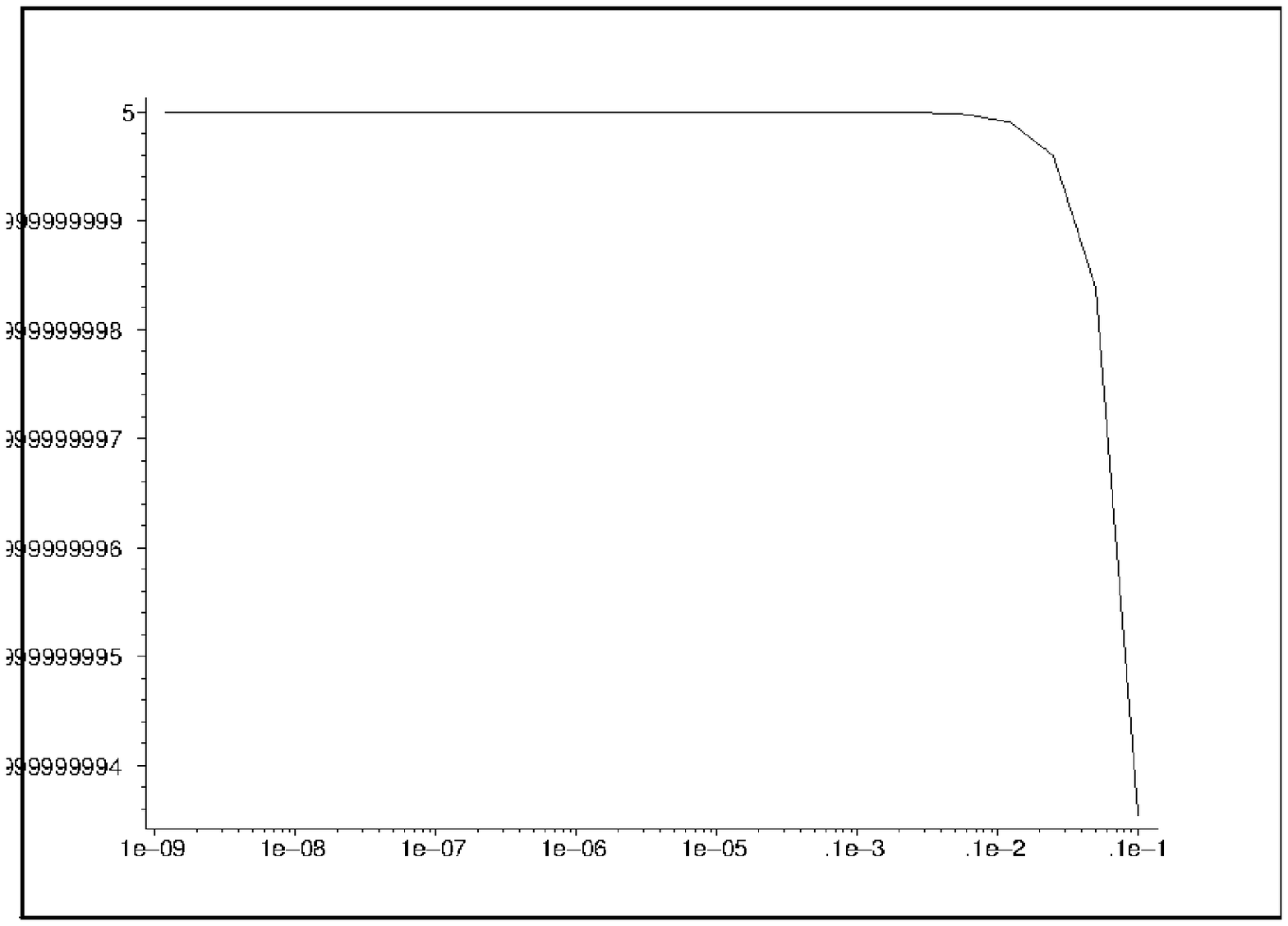}
\caption{Gaussian native space convergence rate estimates 
for the error norms of the optimal and
a polynomially exact stencil of order $7$,  
approximating the 
Laplacian on $30$ general 
points, as function of $h$\label{figgauss30}}
\end{center}
\end{figure}

We finally compare with approximations 
that optimize weights under the constraint of a fixed 
polynomial exactness \cite{davydov-schaback:2016-1}.

The three point sets $X1,\;X2$, and $X3$ of \cite{davydov-schaback:2016-1}
have 32 points in $[-1,+1]^2$ each, and the maximal possible 
order of polynomial reproduction in 2D is 7, if the geometry of the
point set allows it. 
If everything works fine, this would result in convergence
of optimal order $5$ for the approximation of the Laplacian in Sobolev spaces
of order $m\geq 8$, while the optimal rate for smaller $m$ is $m-3$.

A simple 
Singular Value Decomposition of the 28x32 value matrix of polynomials of order 7
on these points reveals that the small singular values in the three cases are
like in Table \ref{svdtable}. This means that only $X1$ allows working for
exactness order 7 without problems, 
while $X2$ suggests order $6$ and $X3$ should still
work with order $5$. If users require higher polynomial
exactness orders (PEO), there is a risk of numerical instabilities.

To demonstrate this effect, Figure \ref{FigX2m8O7P7} shows 
what happens if both the polyharmonic and the minimal-weight approximations 
are kept at order 7 for the set $X2$. As Figure \ref{FigX2m8O6P7runs20}
will show, the optimal Sobolev approximation stays at rate 4 for larger $h$
and needs rather small $h$ to show its optimal rate 5. In
Figure \ref{FigX2m8O7P7}, both the polyharmonic and the minimal-weight
approximations perform considerably worse than the optimum. If we go to 
polynomial exactness order 6, we get Figure \ref{FigX2m8O6P6}, and now
both approximations are close to what the Sobolev approximation does, though the
latter is not at its optimal rate yet. In Figure \ref{FigX2m8O6P7runs20},
the polyharmonic approximation is forced to stay at exactness order 7, while
the weight-minimal
approximation is taken at order 6 to allow more leeway for weight optimization.
Now, in the same range as before, the weight-optimal approximation
clearly outperforms the polyharmonic approximation. The same situation
occurs on the set $X3$ under these circumstances, see   
Figure
\ref{FigX3m8O6P7runs20}. Thus, for problematic point sets,
the polyharmonic approximation should get as much  leeway as the
minimal-weight approximation.

The most sensible choice on $X3$ is to
fix the exactness orders to 5, and the results are in Figure 
\ref{FigX3m8O5P5}. Both approximations cannot compete with the 
convergence rate 4 that the Sobolev approximation shows in this range of $h$.
The latter is calculated using 128 digits and can still use the point set as one
that allows polynomial reproduction of order 6. The other two
approximations are calculated at 32 decimal digits and see the set
$X3$ as one that allows reproduction of order 5 only. 
To get back to a stable situation, we should lower the Sobolev smoothness 
to $m=6$ to get Figure \ref{FigX3m6O5P5}.  We then are back to a convergence
rate like $h^3$ in all cases.

\begin{table}[hbt]
\begin{center}
\begin{tabular}{|r||c|c|c|}
\hline
 Set & $>0.002$ & $\in [2.0e-8,3.6e-7]$ & $<5.0e-14$\\
\hline
X1 & 28 & 0 & 0 \\ 
X2 & 25 & 3 & 0 \\ 
X3 & 18 & 9 & 1 \\
\hline 
\end{tabular} 
\end{center}
\caption{Singular values for three point sets,
for polynomial reproduction of order 7 \label{svdtable}}  
\end{table}

\begin{figure}[hbt] 
\begin{center}
\includegraphics[width=6cm,height=6cm]{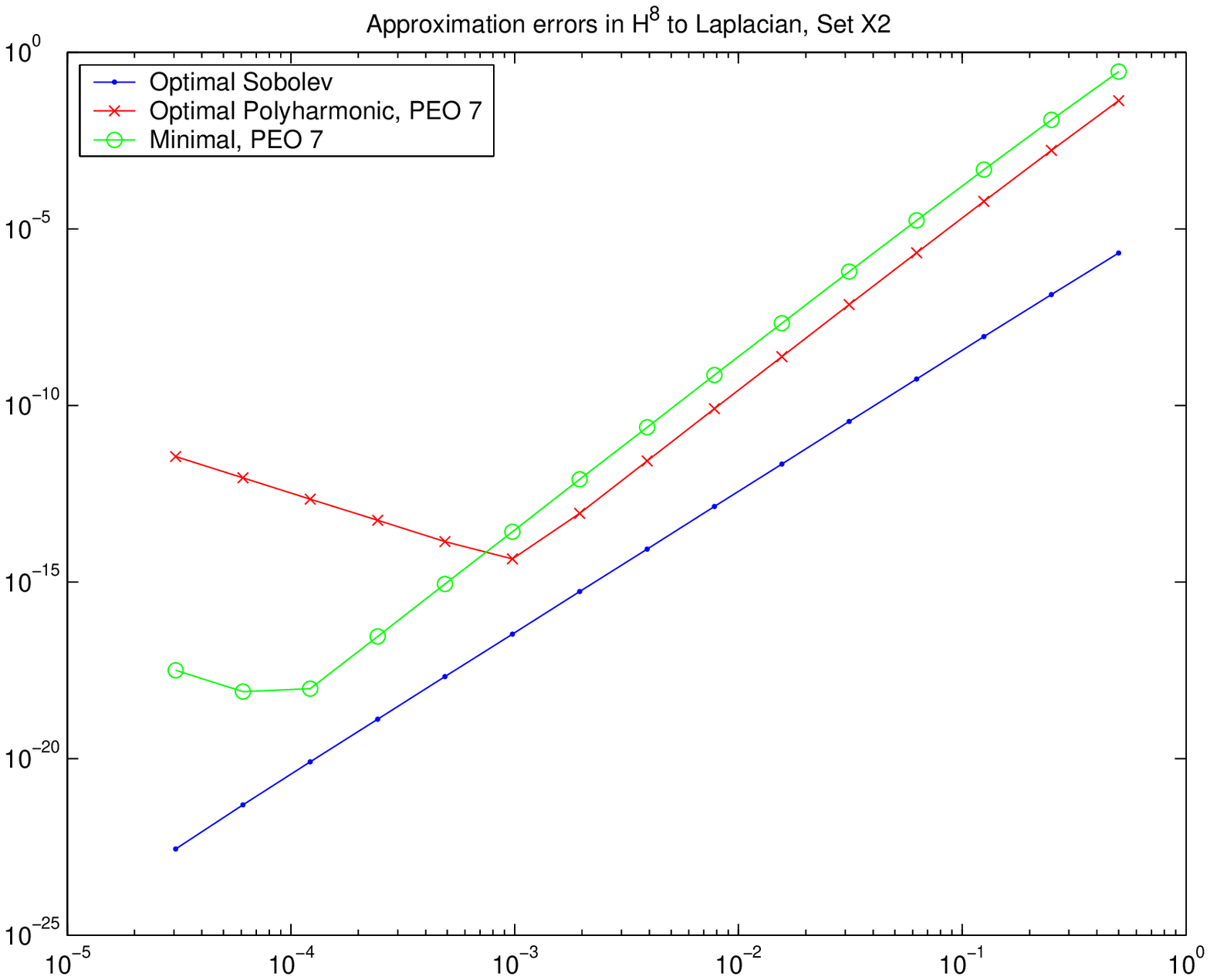}
\includegraphics[width=6cm,height=6cm]{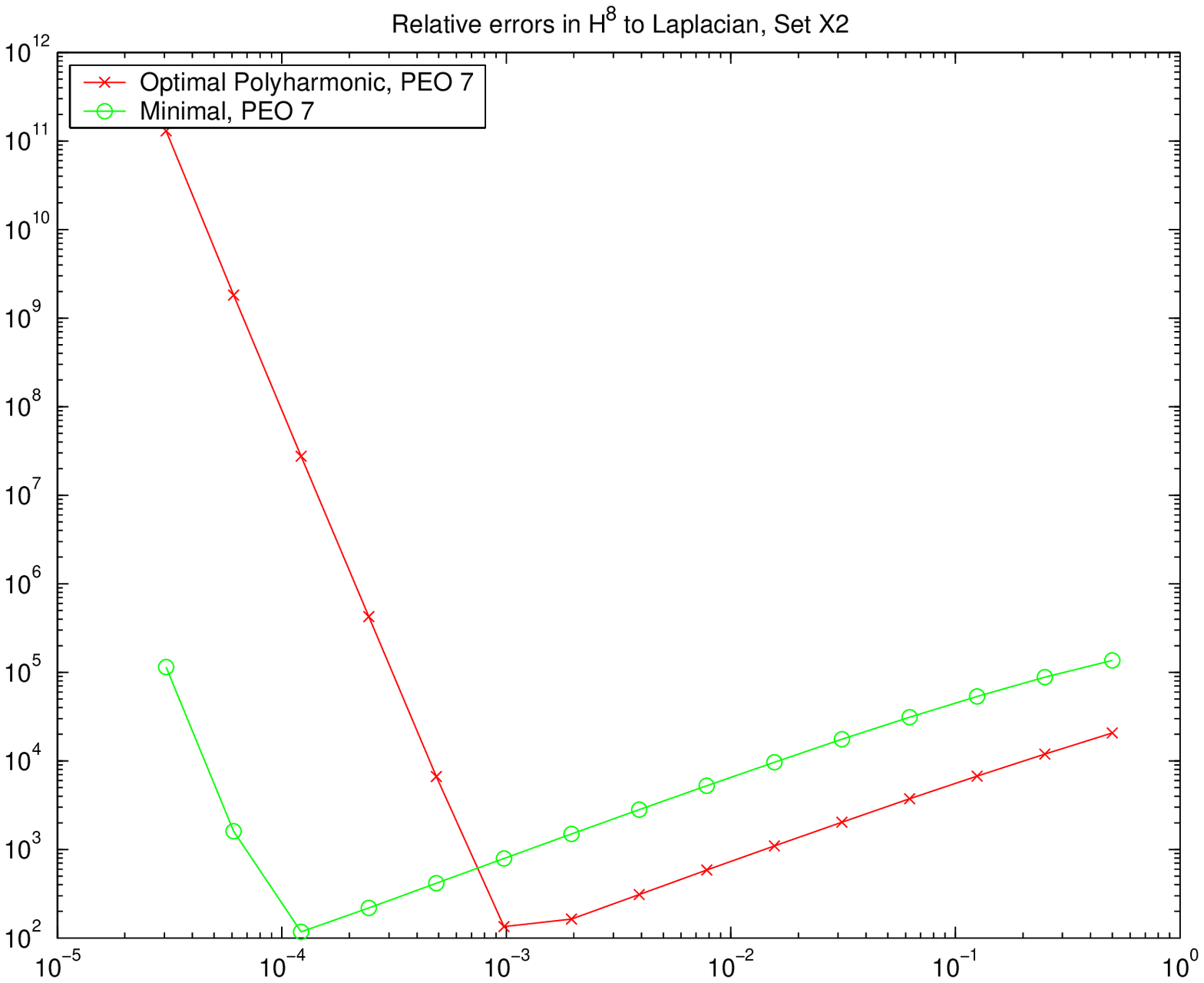}
\caption{Absolute and Sobolev-relative error norms in $W_2^{8}(\R^2)$
for approximations with polynomial exactness order (PEO) 7 on set $X2$
\label{FigX2m8O7P7}}
\end{center}
\end{figure}

\begin{figure}[hbt] 
\begin{center}
\includegraphics[width=6cm,height=6cm]{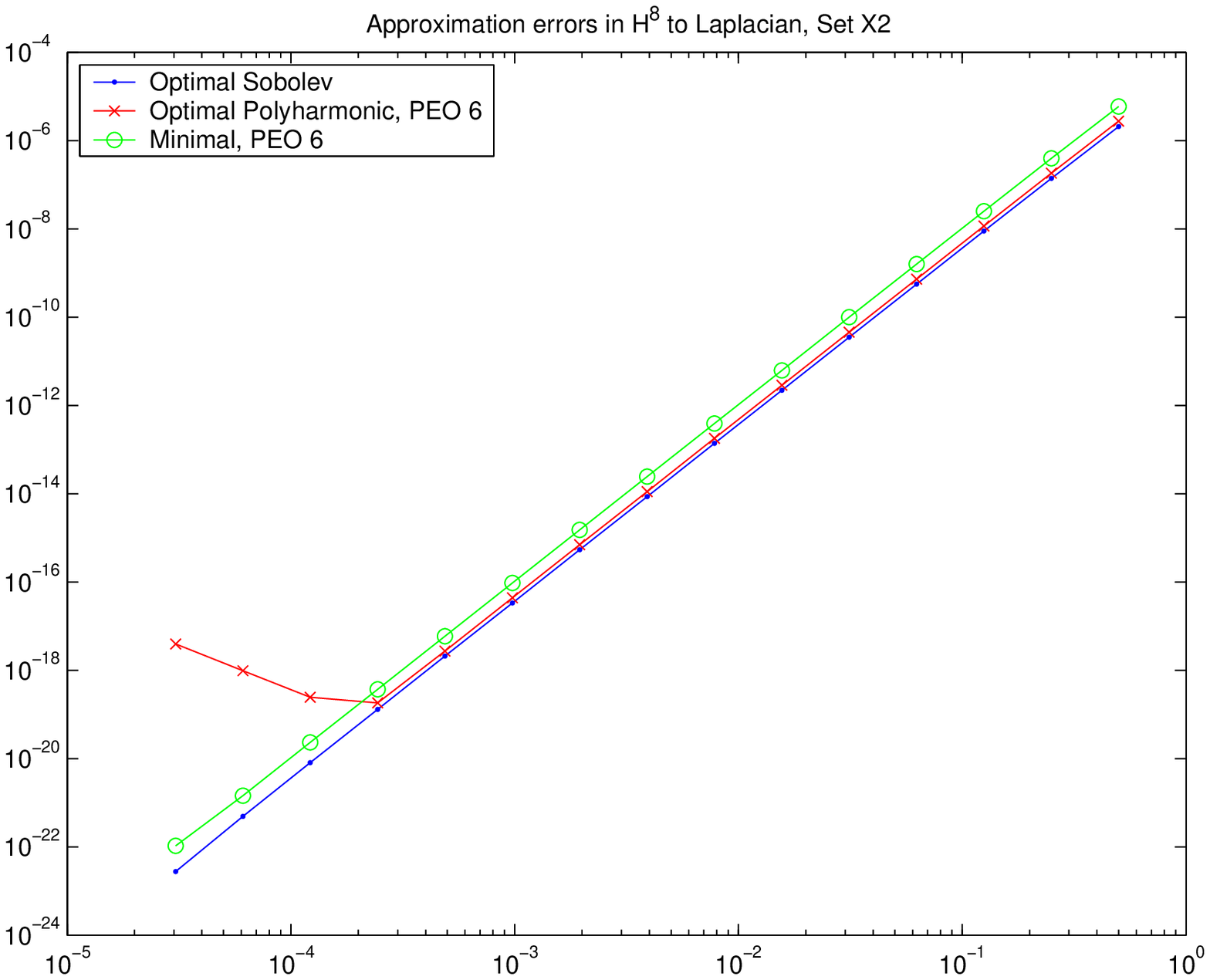}
\includegraphics[width=6cm,height=6cm]{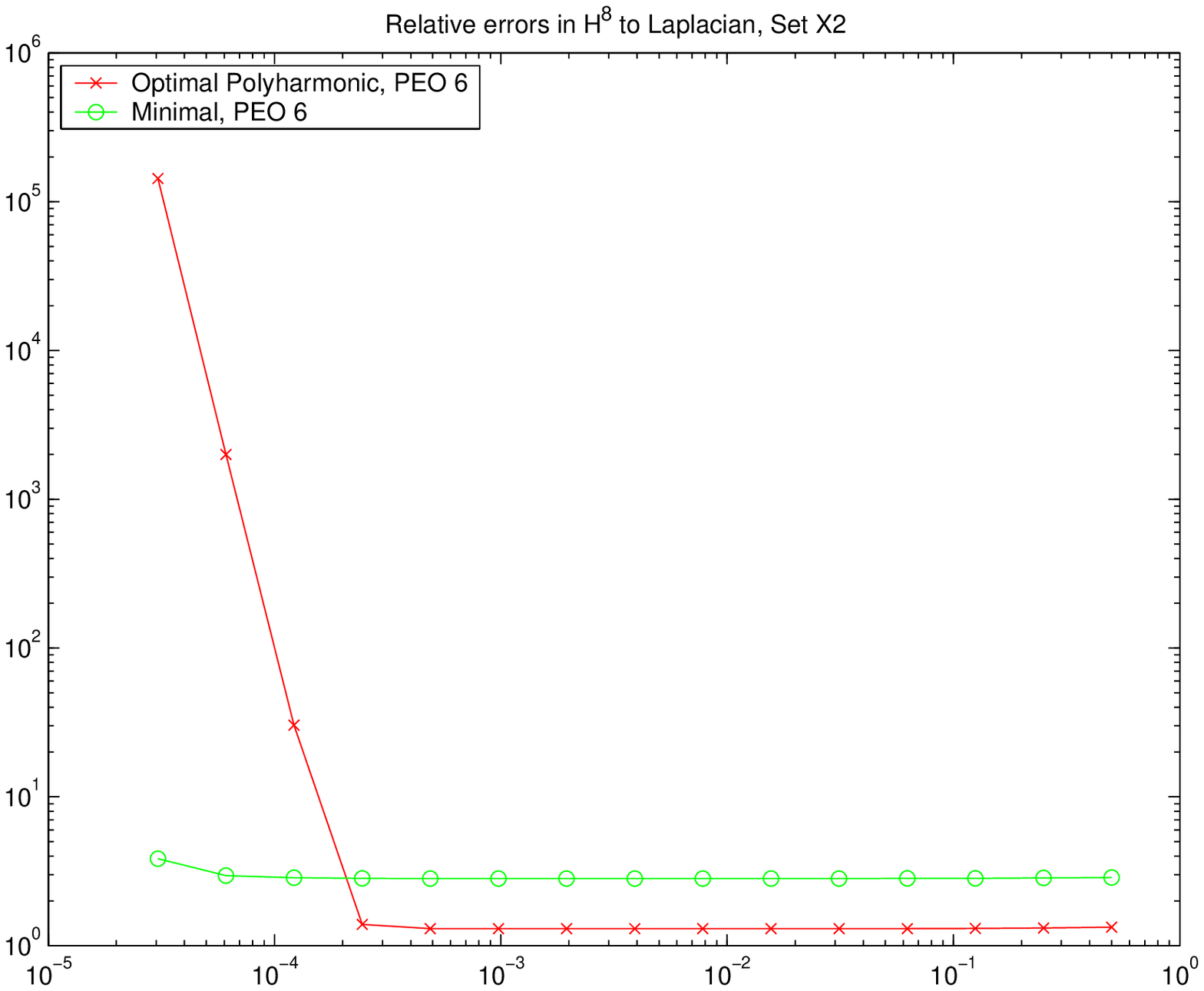}
\caption{Absolute and Sobolev-relative error norms in $W_2^{8}(\R^2)$
for approximations with polynomial exactness order 6 on set $X2$
\label{FigX2m8O6P6}}
\end{center}
\end{figure}

\begin{figure}[hbt] 
\begin{center}
\includegraphics[width=6cm,height=6cm]{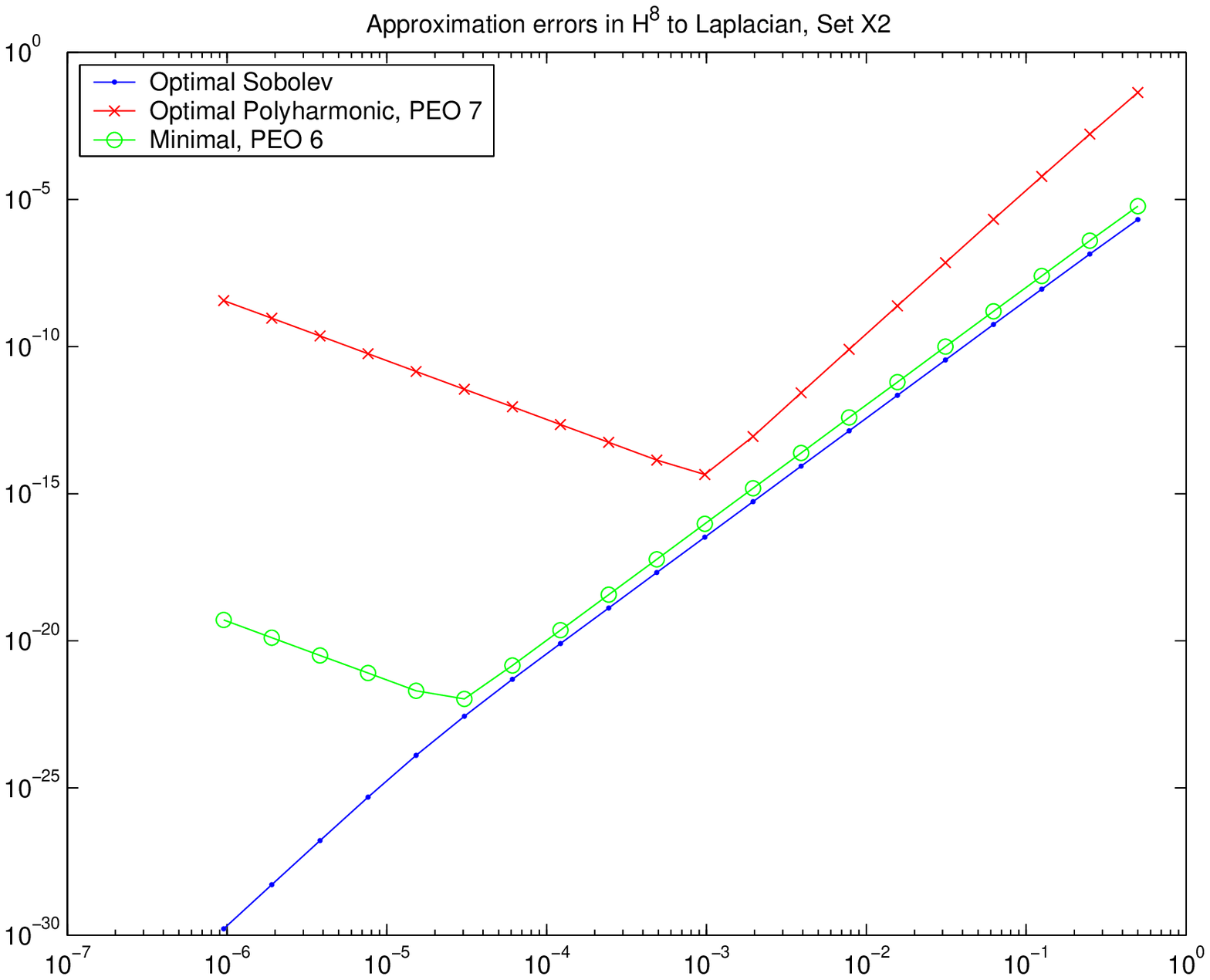}
\includegraphics[width=6cm,height=6cm]{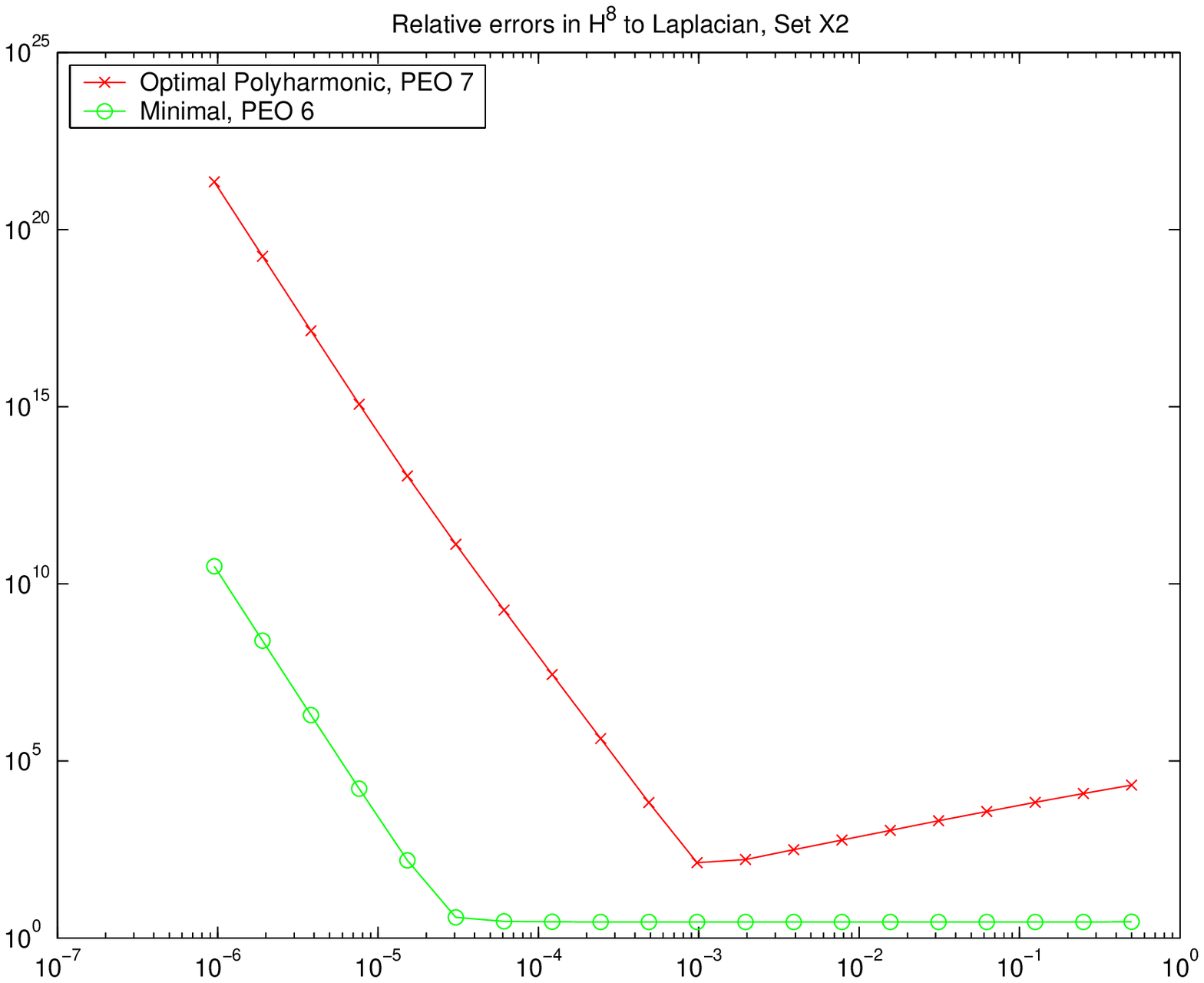}
\caption{Absolute and Sobolev-relative error norms in $W_2^{8}(\R^2)$
for polyharmonic approximation of order 7 
and minimal approximation of order 6 on set $X2$
\label{FigX2m8O6P7runs20}}
\end{center}
\end{figure}

\begin{figure}[hbt] 
\begin{center}
\includegraphics[width=6cm,height=6cm]{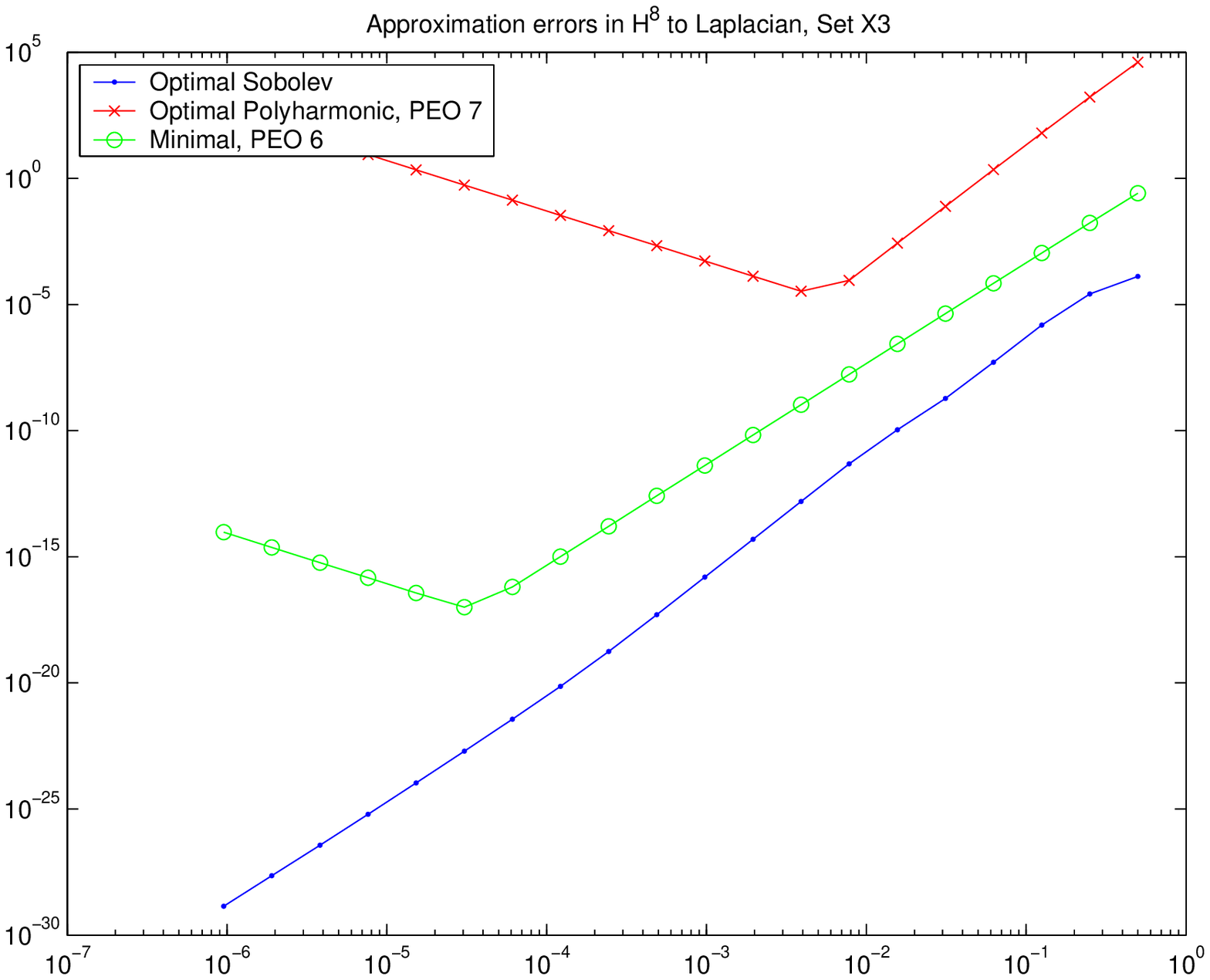}
\includegraphics[width=6cm,height=6cm]{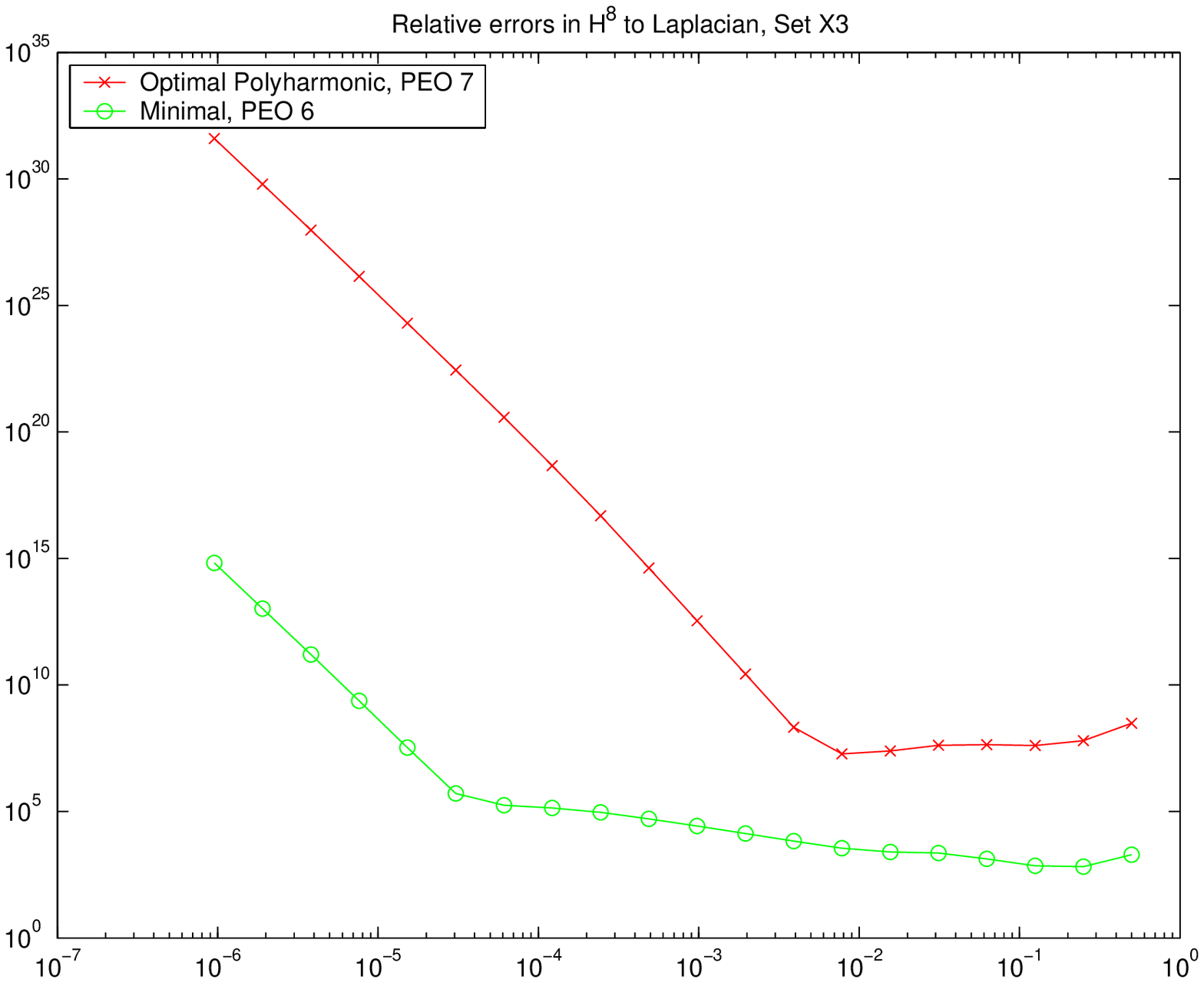}
\caption{Absolute and Sobolev-relative error norms in $W_2^{8}(\R^2)$
for polyharmonic approximation of order 7 
and minimal approximation of order 6 on set $X3$
\label{FigX3m8O6P7runs20}}
\end{center}
\end{figure}

\begin{figure}[hbt] 
\begin{center}
\includegraphics[width=6cm,height=6cm]{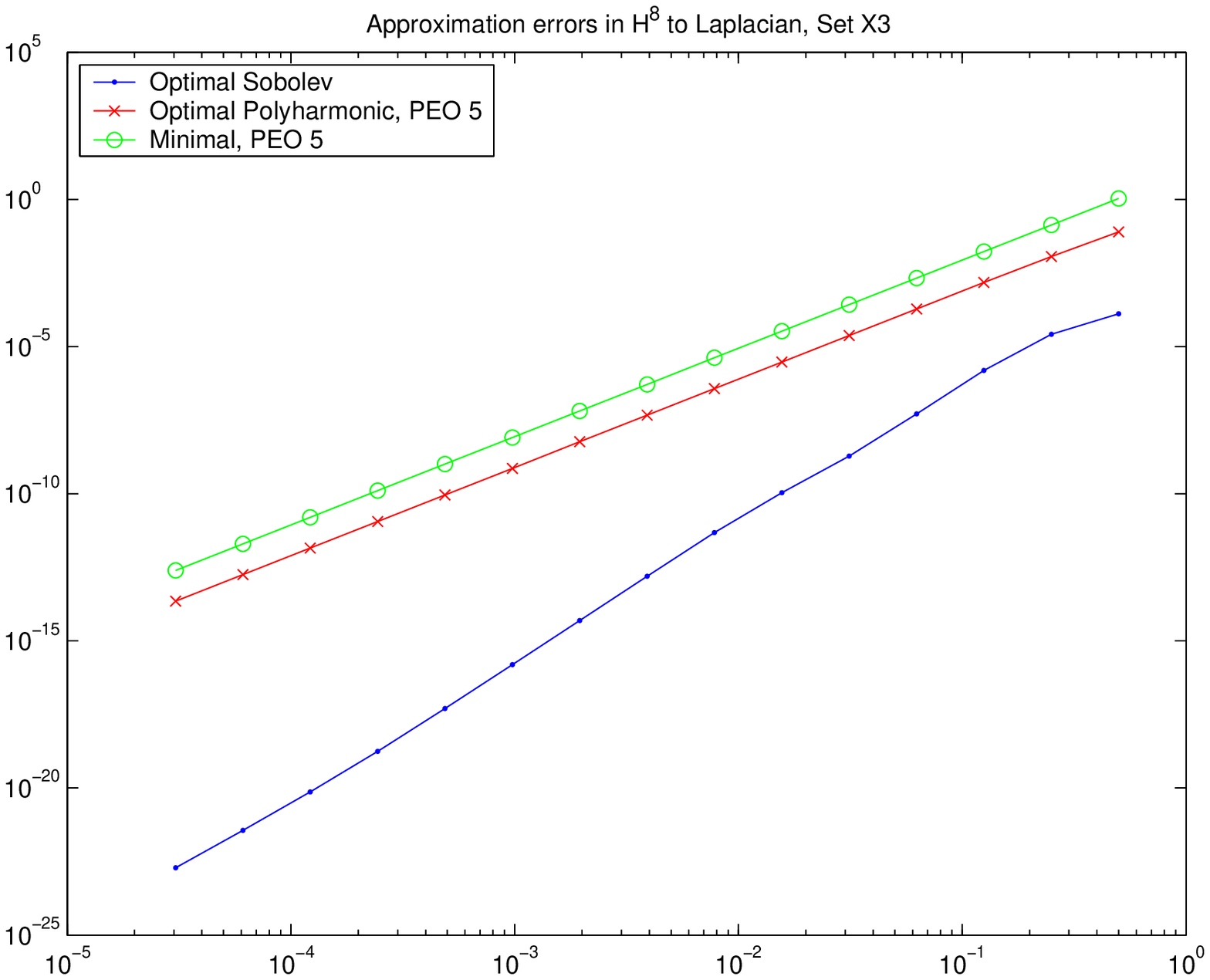}
\includegraphics[width=6cm,height=6cm]{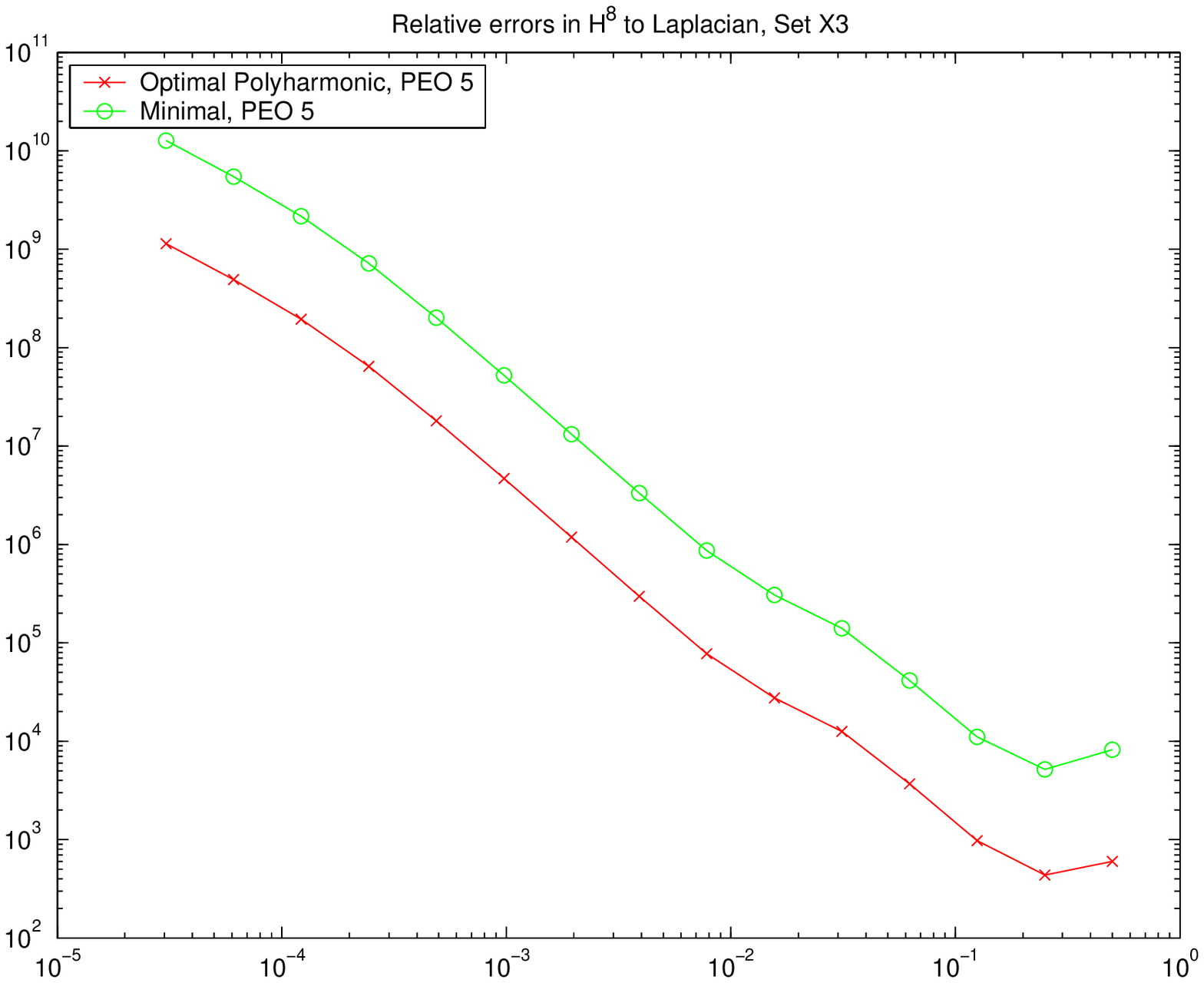}
\caption{Absolute and Sobolev-relative error norms in $W_2^{8}(\R^2)$
for polyharmonic 
and minimal approximation of order 5 on set $X3$
\label{FigX3m8O5P5}}
\end{center}
\end{figure}

\begin{figure}[hbt] 
\begin{center}
\includegraphics[width=6cm,height=6cm]{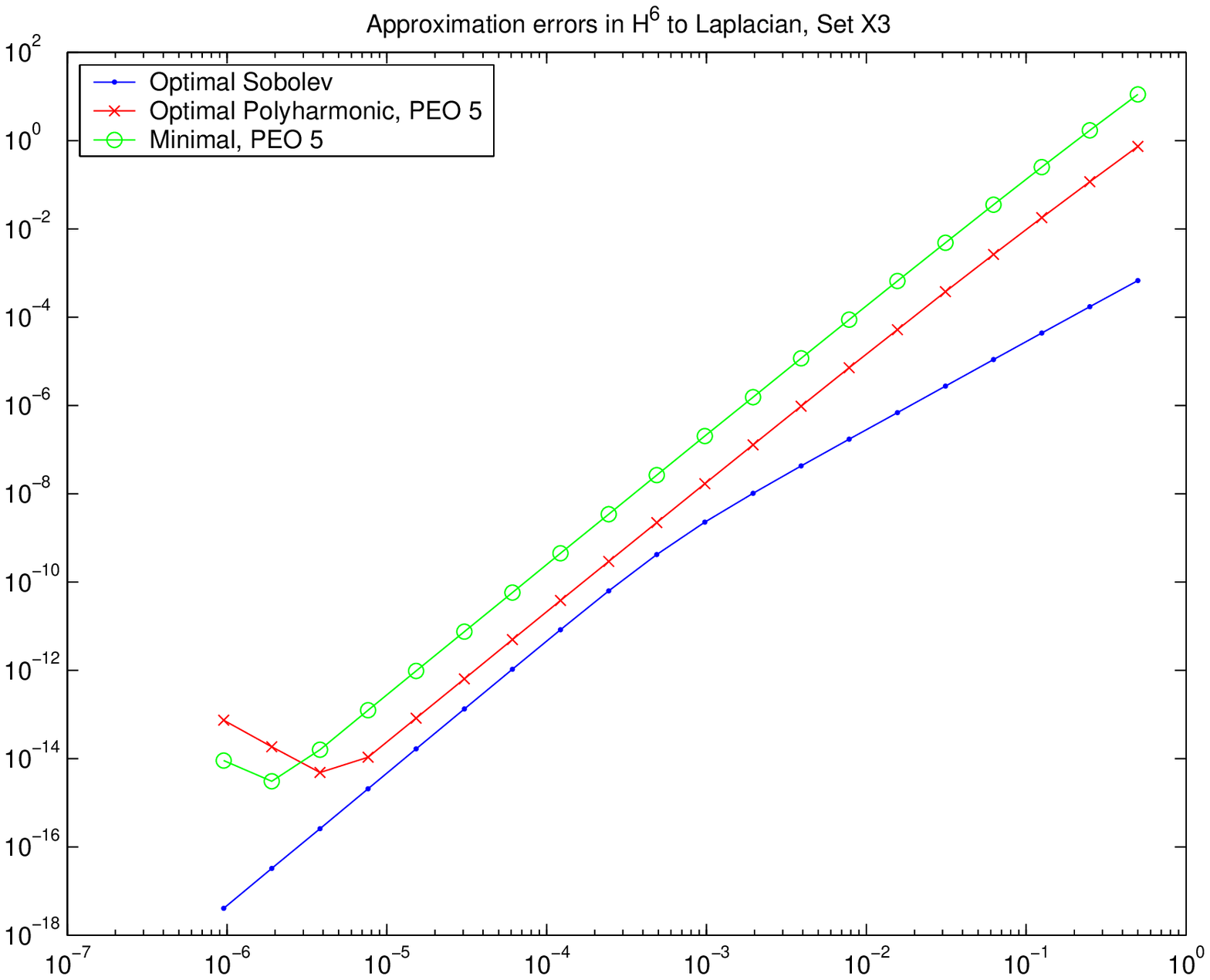}
\includegraphics[width=6cm,height=6cm]{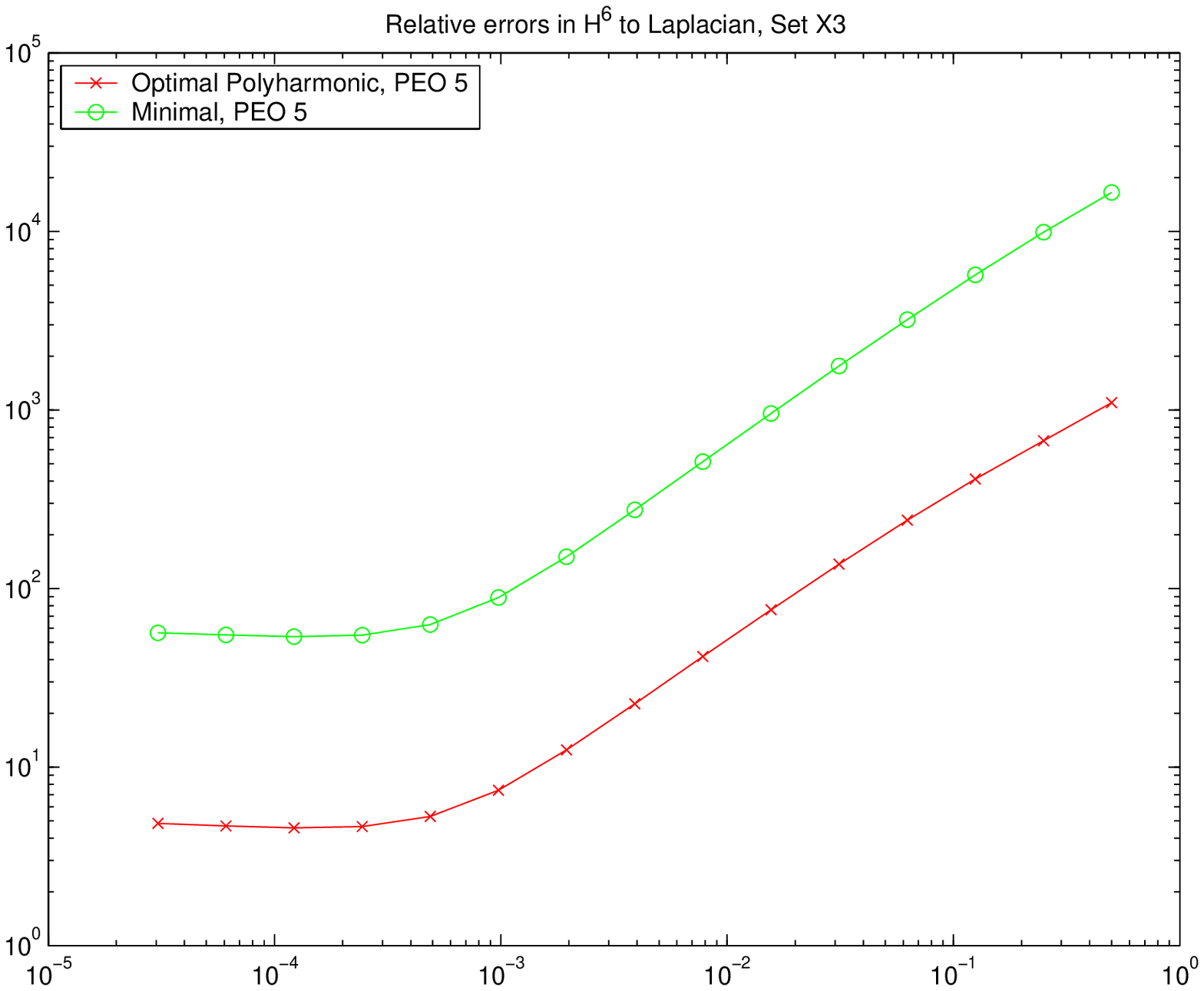}
\caption{Absolute and Sobolev-relative error norms in $W_2^{6}(\R^2)$
for polyharmonic 
and minimal approximation of order 5 on set $X3$
\label{FigX3m6O5P5}}
\end{center}
\end{figure}

\section{Summary and Outlook}\label{SecU}
We established the optimal convergence rate 
\eref{eqOptRate} of nodal approximations in 
Sobolev spaces and proved that it can be 
attained for {\em scalable} approximations with 
sufficient polynomial exactness. 
But we did not investigate the factors in front of the rates.
For highly irregular nodes, it might be reasonable to  
go for a smaller convergence rate, if the factor is much smaller 
than the one for the highest possible rate for that node configuration.
This requires an analysis of how to use the additional degrees of freedom,
and various possibilities for this are in 
\cite{davydov-schaback:2016-1}.  On point sets 
that are badly distributed, it pays off to avoid the highest possible
order of polynomial exactness, and to use the additional degrees of freedom
for minimization of weights along the lines of \cite{davydov-schaback:2016-1}
or to use optimal approximations by polyharmonic kernels at a  smaller 
order of polynomial exactness.

The kernels reproducing Sobolev spaces $W_2^m(\R^d)$ 
have expansions
into power series in $r=\|x-y\|_2$ that start with even powers of $r$ until 
the polyharmonic kernel $H_{m,d}$ occurs. This
shows that error evaluation in Sobolev spaces can be replaced
asymptotically by evaluation in Beppo-Levi spaces, and it 
suggests that the errors of optimal kernel-based
approximations should be close to the errors of optimal scalable 
stencils based on polyharmonic kernels. This occurred in
various experiments (see Figure \ref{Figquot}), 
but a more 
thorough investigation is needed.

Finally, the exceptional case $m-d/2\in \N$
  of the second row of Table \ref{tabSobRates} needs more attention.
  Approximating a functional with scaling order $s$
  by scalable stencils with the minimal polynomial exactness order
  $q=m-d/2$ leads to an unknown convergence behavior between rates 
  $m-s-d/2-\epsilon$ and the optimal rate $m-s-d/2$ that
  is guaranteed for order $q+1=m-d/2+1$. The
  convergence could be like ${\cal O}(h^{m-s-d/2}|\log(h)|^p)$, for instance,
  and we presented an example with $p=1/2$ for $m=d=2,\;s=0$.
\bibliographystyle{plain}

\end{document}